\numberwithin{equation}{section}
\newcommand{\bR}{\mathbb{R}}
\newcommand{\bZ}{\mathbb{Z}}
\newcommand{\cS}{\mathcal{S}}
\newcommand{\cL}{\mathcal{L}}
\newcommand{\m}{\mu}
\newcommand{\RB}{\mathrm{RB}}
\newcommand{\Gauss}[1]{\lfloor{#1}\rfloor}
\newcommand{\red}[1]{{\color{red}{#1}}}
\newcommand{\blue}[1]{{\color{blue}{#1}}}
\newcommand{\boldtitle}[1]{\title{\bfseries #1}}
\newcommand{\I}{\mathrm{I}}
\newcommand{\II}{\mathrm{I\hspace{-.1em}I}}
\newcommand{\III}{\mathrm{I\hspace{-.1em}I\hspace{-.1em}I}}
\newenvironment{MSC}{%
\smallbreak
\noindent \textbf{2010\ Mathematics Subject Classification\,:}}
\newenvironment{keywords}{%
\noindent\textbf{Key words and phrases\,:}\itshape}
\theoremstyle{theorem}
\theoremstyle{definition}
\newtheorem*{multiproclaim}{\variable@name}
\theoremstyle{plain}
\newtheorem{thm}{Theorem}[section]
\newtheorem{prop}[thm]{Proposition}
\newtheorem{lem}[thm]{Lemma}
\newtheorem{conj}[thm]{Conjecture}
\theoremstyle{definition}
\newtheorem{example}[thm]{Example}
\newtheorem{remark}[thm]{Remark}
\author{
 Miki HIRANO\thanks{Partially supported by Grant-in-Aid for Scientific Research (C) No. 24540022.},
 Kohei KATATA
 and
 Yoshinori YAMASAKI
\thanks{Partially supported by Grant-in-Aid for Young Scientists (B) No. 24740018.}
}
\date{\today}
\begin{document}
%===========================================================================
%===========================================================================

\setlength{\baselineskip}{15pt}
\maketitle 

\begin{abstract}
 In this paper, we determine the bound of the valency of the odd circulant graphs
 which guarantees to be Ramanujan for each fixed number of vertices.
 In almost of the cases, the bound coincides with the trivial bound,
 which comes from the trivial estimate of the largest non-trivial eigenvalue of the circulant graph.
 As exceptional cases, the bound in fact exceeds the trivial one by two.
 We then prove that such exceptionals occur only in the cases 
 where the number of vertices has at most two prime factors and is represented by a quadratic polynomial in a finite
 family and, moreover, under the conjecture of Hardy-Littlewood and Bateman-Horn, 
 exist infinitely many.
\begin{MSC}
 {\it Primary}
 11M41,
% Other Dirichlet series and zeta functions For local and global ground fields,  
% 05C50;
% Graphs and linear algebra (matrices, eigenvalues, etc.) 
 {\it Secondary}
 05C25,
% Graphs and abstract algebra (groups, rings, fields, etc.) 
 05C75,
% Structural characterization of families of graphs
 11N32.
% Primes represented by polynomials
\end{MSC} 
\begin{keywords}
 Ramanujan graph, circulant graph, Hardy-Littlewood and Bateman-Horn conjecture, prime number, almost prime number.
\end{keywords}
\end{abstract}

%\tableofcontents

\section{Introduction}

 Let $X$ be a regular graph with standard assumptions, that is, finite, undirected, connected and simple. 
 Spectral analysis on $X$ is an important topic in several interest of mathematics,
 such as combinatorics, group theory, differential geometry, and number theory. 
 Especially, the topics around Ramanujan and expander graphs are focused;
 these are related each other and have common interest in
 the second eigenvalue (or the spectral gap) of the adjacency operator on $X$
({\it cf}. \cite{HooryLinialWigderson2006,Lubotzky2012}). 
%\cite{Lubotzky2012}. 

 The notion of Ramanujan graph was defined in \cite{{LubotzkyPhillipsSarnak1988}}:
 The graph $X$ is called {\it Ramanujan} if its largest non-trivial eigenvalue (in the sense of absolute value)
 is not greater than the Ramanujan bound $2\sqrt{k-1}$, where $k$ is the valency (or degree) of $X$. 
 In view of the theory of zeta functions, the Ramanujan property means that
 the associated Ihara zeta function satisfies the ``Riemann hypothesis''.  
 Here the Ihara zeta functions are regarded as a graph analogue
 of the Selberg zeta functions on locally symmetric spaces.  
 Similarly to the case of the usual prime number (or geodesic) theorem,
 one has a good estimate for the number of the prime cycles in $X$ if it is Ramanujan ({\it cf}. \cite{Terras2011}). 
 Therefore, for a given graph, we want to examine whether it is Ramanujan or not in easy way. 

 As one can be seen from the estimation of the isoperimetric constant,
 the Ramanujan graphs are very much connected in some sense. 
 The complete graph $K_m$ with $m$-vertices
 which is the densest graph with the eigenvalues $\{m-1,-1,\cdots,-1\}$ is in fact Ramanujan.  
 Also, some neighbors of $K_m$ are expected to be Ramanujan ({\it cf}. \cite{AlonRoichman1994}). 
 Now, we want to estimate the precise boundary of the number of removable edges
 from the complete graph preserving the Ramanujan property. 
% In other words, we ask when the Ramanujan property of $X$ is guaranteed only by its valency. 

 We formulate our problem in the general setting. 
 Let $\mathcal{G}$ be the set of all (isomorphic classes of) graphs with standard assumptions and
 $\mathcal{G}_{m,k}$ the subset of $\mathcal{G}$ consisting of the graphs with $m$-vertices and $k$-valency. 
 Similarly, let $\mathcal{R}$ and $\mathcal{R}_{m,k}$ be the set of all Ramanujan graphs
 in $\mathcal{G}$ and $\mathcal{G}_{m,k}$, respectively. 
 For a given subset $\mathcal{X}$ of $\mathcal{G}$,
 we put $\mathcal{X}_{m,k}=\mathcal{X}\cap\mathcal{G}_{m,k}$ and decide the set 
\[
 \Gamma_m=\bigl\{k\in\mathcal{V}_m\,\bigl|\,\mathcal{X}_{m,k}\subset\mathcal{R}_{m,k}\bigr\}
\]
 for each $m$, where $\mathcal{V}_m=\{k\,|\,\mathcal{X}_{m,k}\ne\emptyset\}$. 
 If $K_m$ can be realized in $\mathcal{X}$,
 we have $m-1\in\Gamma_m$ because $\mathcal{G}_{m,m-1}=\mathcal{R}_{m,m-1}=\{K_m\}$. 

 In this paper,
 we take as $\mathcal{X}$ the easiest family of the Cayley graphs, that is,
 the set of odd circulant graphs (i.e., the Cayley graphs of cyclic groups $\mathbb{Z}_{m}$ of odd order $m$),
 and try to decide
\[
 \hat{l}_m
=\max\bigl\{l\in \mathbb{N}\,\bigl|\,m-l\in\mathcal{V}_m,\,[m-l,m-1]\cap\mathcal{V}_m\subset\Gamma_m\bigr\},
\qquad m\in2\mathbb{N}+1.
\]
 Here $\hat{l}_m$ means the maximum number of edge-removal preserving the Ramanujan property
 from the odd complete graph $K_m=\mathrm{Cay}(\mathbb{Z}_m,\mathbb{Z}_m\setminus\{0\})\in\mathcal{X}$.
 Our main result is the following theorem which says our problem associates a classical problem in analytic number theory. 

\begin{thm}
\label{thm:MainResult}
 Let $\mathcal{X}$ be the family of circulant graphs of odd order.
 Then, for $m\ge 15$, we have 
\[
 \hat{l}_m=l_{0,m}+\varepsilon_m,
\]
 where $l_{0,m}=2 \Gauss{\sqrt{m}-\frac{3}{2}}+1$ and $\varepsilon_m\in\{0,2\}$.
 Here $\Gauss{x}$ denotes the largest integer not exceeding $x$.
 Moreover, the case $\varepsilon_m=2$ occurs only
 if $m$ is  represented by one of the quadratic polynomials $k^2+5k+c$
 for some $c\in\{\pm 1,\pm 3,\pm 5\}$
 and is either a prime or a product of two distinct primes $p,q$ with $p<q<4p$.
\end{thm}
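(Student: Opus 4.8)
The plan is to recast the statement as one extremal inequality for eigenvalues of circulants and then read off the arithmetic. For odd $m$ a circulant graph is $\Cay(\bZ_m,S)$ with $S=-S$ and even valency $k=|S|$; putting $T=(\bZ_m\setminus\{0\})\setminus S$ (again symmetric) and using $\sum_{s\ne0}e^{2\pi ijs/m}=-1$ for $j\ne0$, the nontrivial eigenvalues are $\lambda_j=-1-\mu_j(T)$, where $\mu_j(T)=\sum_{t\in T}e^{2\pi ijt/m}$. Hence ``every circulant of valency $k$ on $m$ vertices is Ramanujan'' is equivalent to $g(m,m-1-k)\le 2\sqrt{k-1}$, where
\[
 g(m,N)=\max\bigl\{\,|1+\mu_j(T)|\;:\;T=-T\subseteq\bZ_m\setminus\{0\},\ |T|=N,\ 1\le j\le m-1\,\bigr\}.
\]
Note $\mu_j(T)\in\bR$, and the realized valencies are exactly the even integers in $[2,m-1]$.

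Next I would sandwich the threshold. On one side $|\mu_j(T)|\le N$ gives $g(m,N)\le N+1$, and $N+1\le2\sqrt{m-N-2}$ holds precisely for $(N+3)^2\le4m$, i.e. $N\le2\sqrt m-3$; so every valency $k\ge m-l_{0,m}$ is all-Ramanujan, whence $\hat l_m\ge l_{0,m}$. On the other side, for $N=l_{0,m}+3$ take the interval $T=\{\pm1,\dots,\pm\frac N2\}$ and $j=1$: the Dirichlet-kernel identity gives $1+\mu_1(T)=\sin\!\bigl(\tfrac{(N+1)\pi}{m}\bigr)/\sin\!\bigl(\tfrac{\pi}{m}\bigr)$, and a Taylor comparison with $2\sqrt{m-N-2}=2\sqrt{m-l_{0,m}-5}$ shows the former strictly exceeds the latter for all $m$ past an explicit bound, the finitely many remaining $m$ being checked by hand. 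So valency $m-l_{0,m}-4$ is never all-Ramanujan and $\hat l_m\le l_{0,m}+2$; since $\hat l_m$ is odd (as $m-\hat l_m$ is a realized valency), $\hat l_m\in\{l_{0,m},l_{0,m}+2\}$, i.e. $\varepsilon_m\in\{0,2\}$, with $\varepsilon_m=2$ if and only if
\[
 g\bigl(m,\,l_{0,m}+1\bigr)\ \le\ 2\sqrt{\,m-l_{0,m}-3\,}.
\]

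It remains to decide that inequality, which I would do by splitting the maximum according to $d=\gcd(j,m)$. For $d=1$: as $j$ runs over units and $T$ over symmetric $N$-sets, $jT$ runs over \emph{all} symmetric $N$-sets, and since $\sum_{t\in U}e^{2\pi it/m}$ is real the maximum of $|1+\mu_j(T)|$ is attained at the interval, equal to $I(m,N):=\sin\!\bigl(\tfrac{(N+1)\pi}{m}\bigr)/\sin\!\bigl(\tfrac{\pi}{m}\bigr)$. Writing $l_{0,m}=2k+1$ (so $k=\Gauss{\sqrt m-\tfrac32}$ and $(k+\tfrac32)^2\le m<(k+\tfrac52)^2$), one has $m=k^2+5k+c$ with $c$ odd and $c\le5$, and an expansion of $I(m,l_{0,m}+1)\le2\sqrt{m-l_{0,m}-3}$ reduces it, up to a controlled error, to $(2k+5-2\sqrt m)(\sqrt m+1)\le\tfrac{4\pi^2}{3}$; since $2k+5-2\sqrt m\approx(\tfrac{25}{4}-c)/(k+\tfrac52)$ the left side is $\approx\tfrac{25}{4}-c$, so the inequality holds exactly when $\tfrac{25}{4}-c\le\tfrac{4\pi^2}{3}=13.16\ldots$, i.e. $c\in\{\pm1,\pm3,\pm5\}$. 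For $d>1$: with $m'=m/d$ (odd) one has $\mu_j(T)=\sum_{r\bmod m'}c_r e^{2\pi ir/m'}$, where $c_0\le d-1$, $c_r\le d$ for $r\ne0$, $c_r=c_{m'-r}$, $\sum_r c_r=N$, and the maximum of $|1+\mu_j(T)|$ is attained by greedily filling the residue classes $0,\pm1,\pm2,\dots$. In particular, once $m$ has a prime factor $p$ with $m/p-1\ge N$ one may take $T$ inside the order-$(m/p)$ subgroup of $\bZ_m$, getting $\mu_j=N$ and $|1+\mu_j|=N+1>2\sqrt{m-N-2}$, so $\varepsilon_m=0$. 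Thus $\varepsilon_m=2$ forces $m/p<l_{0,m}+2\ (\sim2\sqrt m)$ for every prime $p\mid m$, which leaves only $m$ prime, $m$ a prime square, or $m=pq$ with $p<q<4p$ (from $q<2\sqrt{pq}$); verifying that for the surviving $m$ the greedy $d>1$ configurations do not beat the interval then leaves the criterion governed by the $d=1$ analysis, and combining the two conditions gives the statement.

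The main obstacle is the precision of these two extremal estimates. In the $d=1$ step the admissible $c$ are cut out by comparing $\tfrac{25}{4}-c$ with $\tfrac{4\pi^2}{3}=13.159\ldots$, a genuinely tight margin (the value $13.25$ at $c=-7$ only just fails), so one must keep explicit control of the Taylor errors in $\sin\!\bigl(\tfrac{(l_{0,m}+2)\pi}{m}\bigr)/\sin\!\bigl(\tfrac{\pi}{m}\bigr)$ and in $2\sqrt{m-l_{0,m}-3}$, and separately dispose of the small $m$ where the asymptotics are not yet decisive. In the $d>1$ step the bulk of the work is the combinatorial optimization of $|1+\sum_r c_r e^{2\pi ir/m'}|$ under the class-size constraints and the verification that, for the surviving factorizations, it never exceeds the interval bound — this is where ``$q<4p$'' is actually pinned down and where the case analysis is heaviest.
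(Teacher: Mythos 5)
Your proposal follows essentially the same route as the paper: the trivial bound $|\mu_j(T)|\le N$ giving $\hat l_m\ge l_{0,m}$, the Dirichlet-kernel evaluation of the interval complement at covalency $l_{0,m}+4$ giving $\hat l_m\le l_{0,m}+2$, the $\gcd(j,m)$ decomposition with the $d=1$ asymptotic comparison of $\frac{25}{4}-c$ against $\frac{4\pi^2}{3}$ cutting out $c\in\{\pm1,\pm3,\pm5\}$, and the subgroup construction for composite $m$ forcing $q<4p$ — all of which are exactly the paper's Lemmas 3.2, 3.4, 3.5 and Proposition 4.1. The outline is sound and the tight spots you flag (explicit Taylor error control near $c=-7$, the finitely many small $m$, and the prime-square case $m=25,49$ which the paper isolates as a separate type) are precisely where the paper spends its effort.
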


 We notice that $l_{0,m}$ comes from a trivial estimate of the largest non-trivial eigenvalue.
 We also remark that the above condition is not sufficient; if $q$ is very close to $4p$,
 then one can in fact observe that $\varepsilon_{m}=0$ even if $m=pq$ can be represented by one of the above quadratic polynomials ({\it cf}. \S4).
 Let us call $m$ {\it ordinary} (resp. {\it exceptional}) if $\varepsilon_m=0$ (resp. $\varepsilon_m=2$).
 Our result suggests that
 the existence of infinitely many exceptionals for our $\mathcal{X}$ is related to the
 well-known conjecture
 of Hardy-Littlewood \cite{HardyLittlewood1923} and Bateman-Horn \cite{BatemanHorn1962} on primes represented by polynomials,
 or to the Iwaniec's important result  \cite{Iwaniec1978} (see also the recent result \cite{LemkeOliver2012})
 on the almost-primes represented by a quadratic polynomial. 

 We also consider the case of odd abelian family in the final section. 
 For the case of even circulant family,
 we need a slightly different formulation coming from the symmetricity of generating sets for the groups. 
 We treat this case in another paper \cite{Katata2013}. 
 Moreover, we will discuss the case of the simplest non-abelian family,
 that is, the dihedral family, in \cite{HiranoKatataYamasaki}.

\section{Preliminaries}

\subsection{Cayley graphs and their eigenvalues}
\label{sec:CayleyGraph}

 Let $X$ be a $k$-regular graph with $m$-vertices which is finite, undirected, connected, and simple. 
 The {\it adjacency matrix} $A_X$ {\it of} $X$ is the symmetric matrix of size $m$
 whose entry is $1$ if the corresponding pair of vertices are connected by an edge and $0$ otherwise. 
 We call the eigenvalues of $A_X$ the {\it eigenvalues of} $X$. 
 The set $\Lambda(X)$ of all eigenvalues of $X$ is given as 
\[
 \Lambda(X)=\bigl\{\lambda_i\,\bigl|\,k=\lambda_0>\lambda_1\geq\cdots\geq\lambda_{m-1}\geq -k\bigr\}.
\]
 Remark that $-k\in\Lambda(X)$ if and only if $X$ is bipartite ({\it cf.} \cite{DavidoffSarnakValette2003}). 
 Let $\m(X)$ be the largest non-trivial eigenvalue of $X$ in the sense of absolute value, that is, 
\[
 \m(X)=\max\bigl\{|\lambda|\,\bigl|\,\lambda\in \Lambda{(X)}, \ |\lambda|\ne k\bigr\}.
\]
 Then, $X$ is called {\it Ramanujan} if the inequality $\mu(X)\le 2\sqrt{k-1}$ holds. 
 Here the constant $2\sqrt{k-1}$ in the right hand side of this inequality is often
 called the {\it Ramanujan bound for} $X$ and is denoted by $\RB(X)$. 

 Let $G$ be a finite group with the identity element $e$
 and $S$ a {\it Cayley subset of} $G$, that is, a symmetric set of generators for $G$ satisfying $e\not\in S$. 
 Then, the {\it Cayley graph} $X(G,S)$ is the $|S|$-regular graph with vertex set $G$ and the edge set
 $\{(x,y)\in G\times G\,|\,x^{-1}y\in S\}$, which is undirected, connected, and simple. 
 The adjacency matrix of $X(G,S)$ is described in terms of the right regular representation of $G$ ({\it cf.} \cite{Terras1999}). 
 In particular, if $G$ is a finite abelian group,
 then we have 
\[
 \Lambda\bigl(X(G,S)\bigr)
=\left\{\left.\lambda_{\chi}=\sum_{s\in S}\chi(s)\,\right|\,\chi\in\hat{G}\right\}.
\]
 Here $\hat{G}$ is the dual group of $G$. 

\subsection{A problem for Ramanujan circulants} 

 Fix $m$ a positive integer and let $\bZ_m=\bZ/m\bZ$ be the cyclic group of order $m$. 
 Moreover, put $\cS$ the set of all Cayley subsets of $\bZ_m$. 
 We call a Cayley graph $X(S)=X(\bZ_m,S)$ with $S\in\cS$ a {\it circulant graph} of order $m$. 
 Since the dual group of $\bZ_m$ consists of the characters
 $\chi_j(a)=e^{\frac{2\pi iaj}{m}}$ ($0\le j\le m-1$),
 the set of all eigenvalues of $X(S)$ is given by
\[
 \Lambda(X(S))=\bigl\{\mu_j(S)\,\bigl|\,0\le j\le m-1\bigr\},
\]
 where $\mu_0(S)=|S|$ and 
\begin{equation}
\label{def:eigenvaluesa}
 \m_j(S)
=\sum_{a\in S}e^{\frac{2\pi iaj}{m}}
=-\sum_{b\in \bZ_m\setminus S}e^{\frac{2\pi ibj}{m}}, \qquad 1\le j\le m-1. 
\end{equation}

 Besides the valency $|S|$ of a circulant graph $X(S)$,
 we call $l(S)=|\bZ_m\setminus S|=m-\vert S\vert$ the {\it covalency} of $X(S)$. 
 Now we divide the set $\cS$ of Cayley subsets of $\bZ_m$ by the covalency as 
\[
 \cS=\bigsqcup_{l\in\cL}\cS_l, \quad \cS_l=\{S\in\cS\,|\,l(S)=l\}.
\]
 Here $\cL=\{l(S)\,|\,S\in\cS\}$ is the set of values of covalency. 
 For example, we have $\bZ_m\setminus\{0\}\in\cS_1$ and $\{\pm1\}\in\cS_{m-2}$
 for which the attached graphs are the complete graph $K_m=X(\bZ_m\setminus\{0\})$
 and the cycle graph $C_m=X(\{\pm1\})$ of order $m$, respectively. 
 These Cayley subsets give the non-trivial eigenvalues 
\[
 \m_j\bigl(\bZ_m\setminus\{0\}\bigr)=-1,\quad
 \m_j\bigl(\{\pm 1\}\bigr)=2\cos\frac{2\pi j}{m}, \qquad 1\leq j\le m-1.
\]
 Moreover, if we put 
\begin{equation}
\label{def:Sl}
 S^{(l)}=\bZ_{m}\setminus\Bigl\{0,\pm 1,\pm 2,\ldots,\pm\frac{l-1}{2}\Bigr\}
\end{equation}
 for an odd integer $l$ with $1\le l\leq m-2$,
 then one sees that $S^{(l)}$ is an element of $\cS_{l}$ with the non-trivial eigenvalues 
\begin{equation}
\label{for:eigenSl}
 \m_{j}\bigl(S^{(l)}\bigr)
=-\sum^{\frac{l-1}{2}}_{b=-\frac{l-1}{2}}e^{\frac{2\pi ibj}{m}}=-\frac{\sin\frac{\pi jl}{m}}{\sin\frac{\pi j}{m}},
 \qquad 1\leq j\le m-1.
\end{equation}
 The Cayley subset $S^{(l)}$ often appears in our discussion.

 From the definition, the circulant graph $X(S)$ is Ramanujan
 if and only if $\m(S)\le \RB(S)$ where $\mu(S)=\mu(X(S))$ and $\RB(S)=\RB(X(S))$. 
 Observe that the Ramanujan bound $\RB(S)=2\sqrt{m-l-1}$ is depend only on the covalency $l=l(S)$ of $S\in\cS_l$. 
 Moreover, we remark that $\cS_1=\{\bZ_m\setminus\{0\}\}$ and $X(\bZ_m\setminus\{0\})$ is a Ramanujan circulant
 because $\m(\bZ_m\setminus\{0\})=|-1|\le 2\sqrt{m-2}=\RB(\bZ_m\setminus\{0\})$.
 These observations naturally lead us to evaluate the bound
\begin{align*}
 \hat{l}
&=\max
\left\{
 l\in \cL \,\left|\,\text{$X(S)$ is Ramanujan for all $S\in\bigsqcup_{k\in\cL \atop 1\le k\le l}\cS_k$}
\right.\right\},
\end{align*}
 which means the maximal number of edge-removal from the complete graph $K_m$ preserving the Ramanujan property.  
 In particular, $\hat{l}=m-2$ is equivalent to say that $X(S)$ is Ramanujan for all $S\in\cS$.

 In this paper, we treat only the case of odd $m$. 
 Then, each $S\in\cS$ has even number of elements because of symmetry,
 and hence $\cL=\{1,3,\cdots,m-2\}$ consists of odd integers. 
 Moreover, $-|S|$ does not appear in $\Lambda(X(S))$ because $X(S)$ has odd vertices and thus is not bipartite. 
 (It is known that $X(S)$ is bipartite if and only if $m$ is even and all the elements of $S$ are odd. See, e.g., \cite{Heuberger2003}.)
 Therefore, we have $\m(S)=\max\{|\m_j(S)|\,|\,1\le j\le m-1\}$.

\section{Initial results}

 The following lemma says that, on the determination of $\hat{l}$, we may assume that $m\ge 15$.

\begin{lem}
\label{prop:finite}
 $\hat{l}=m-2$ if and only if $3\le m\le 13$.
\end{lem}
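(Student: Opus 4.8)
The plan is to prove both directions by explicit computation, since the claim concerns only finitely many small values of $m$ on one side and an explicit infinite family on the other.

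\medskip

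\noindent\textbf{The ``if'' direction ($3\le m\le 13$ implies $\hat l=m-2$).} Here we must show that for each odd $m\in\{3,5,7,9,11,13\}$, \emph{every} circulant graph $X(S)$ of order $m$ is Ramanujan. First I would reduce the number of cases: by \eqref{def:eigenvaluesa}, $\m_j(S)$ depends only on $S$, and up to the automorphisms of $\bZ_m$ (multiplication by units) and the trivial symmetry, there are very few Cayley subsets of each covalency $l\in\{1,3,\dots,m-2\}$. For each such $S$ one computes $\m(S)=\max_{1\le j\le m-1}|\m_j(S)|$ and checks $\m(S)\le 2\sqrt{m-l-1}=\RB(S)$. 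A useful shortcut: for the ``extremal'' subset $S^{(l)}$ of \eqref{def:Sl} the eigenvalues are given in closed form by \eqref{for:eigenSl}, and for covalency $l=m-2$ the only subset is $\{\pm1\}$ with $\m=2\cos\frac{2\pi}{m}<2=\RB$. The point is that when $m$ is small the valency $k=m-l$ is small too, so the Ramanujan bound $2\sqrt{k-1}$ is not very restrictive relative to the trivial bound $|\m_j(S)|\le k$; a short table settles it. (One should double-check the connectivity/simplicity hypotheses, but $S$ being a Cayley subset guarantees these.)

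\medskip

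\noindent\textbf{The ``only if'' direction ($\hat l=m-2$ implies $m\le 13$).} For this it suffices to exhibit, for every odd $m\ge 15$, a single Cayley subset $S\in\cS_l$ with $l\le m-2$ that is \emph{not} Ramanujan; then $\hat l<m-2$. The natural candidate is $S=\{\pm1\}$ itself, i.e.\ the cycle graph $C_m$ with covalency $l=m-2$: its largest non-trivial eigenvalue is $2\cos\frac{2\pi}{m}$, which approaches $2$ from below, while $\RB=2\sqrt{1}=2$ — so $C_m$ is \emph{always} Ramanujan and this choice fails. Instead I would take a subset of smaller covalency where the Ramanujan bound is tighter relative to the eigenvalue spread. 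A clean choice is $S^{(l)}$ with $l$ chosen near $\sqrt m$: by \eqref{for:eigenSl}, $\m_j(S^{(l)})=-\sin\frac{\pi jl}{m}/\sin\frac{\pi j}{m}$, and taking $j$ with $\frac{\pi j}{m}$ small forces $|\m_j(S^{(l)})|\approx l$, whereas $\RB(S^{(l)})=2\sqrt{m-l-1}$. For $l$ slightly larger than the threshold $l_{0,m}=2\Gauss{\sqrt m-\tfrac32}+1$, one gets $l>2\sqrt{m-l-1}$, so $X(S^{(l)})$ is not Ramanujan; since such an $l$ with $l\le m-2$ exists as soon as $m\ge 15$, we are done. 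Concretely one verifies that $l=l_{0,m}+2$ works: the inequality $(l_{0,m}+2)^2>4(m-l_{0,m}-3)$ reduces, via the definition of the floor, to a quadratic estimate in $\sqrt m$ that holds for all $m\ge 15$, and separately one confirms $l_{0,m}+2\le m-2$ in that range.

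\medskip

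\noindent\textbf{Main obstacle.} The ``if'' direction is pure finite checking and poses no real difficulty beyond bookkeeping. The genuine (if mild) obstacle is making the ``only if'' argument uniform in $m$: one must pin down an explicit $S$ and an explicit test character $\chi_j$ (equivalently an explicit $j$) for which $|\m_j(S)|>\RB(S)$ holds \emph{simultaneously} for all odd $m\ge 15$, and then handle the boundary behaviour of the floor function $\Gauss{\sqrt m-\tfrac32}$ so that the comparison $l_{0,m}+2\le m-2$ and the eigenvalue inequality both stay valid at the small end $m=15,17,19,\dots$ before the asymptotics kick in. This is where a couple of cases near $m=15$ may need to be inspected by hand, exactly as the statement's threshold $m\ge 15$ anticipates.
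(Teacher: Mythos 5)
Your ``if'' direction is fine and matches the paper (a finite check). The ``only if'' direction, however, has a genuine gap. You propose to witness non-Ramanujan-ness with $S^{(l)}$ for $l=l_{0,m}+2$, and you claim the required inequality ``reduces to $(l_{0,m}+2)^2>4(m-l_{0,m}-3)$''. That reduction is a non sequitur: the quantity $l$ is only the \emph{trivial upper bound} for $|\m_j(S)|$, so $l>\RB$ tells you nothing about whether an actual eigenvalue exceeds $\RB$. The true eigenvalue is $|\m_1(S^{(l)})|=\sin\frac{\pi l}{m}/\sin\frac{\pi}{m}$, which falls short of $l$ by roughly $\frac{4\pi^2}{3\sqrt{m}}$ when $l\sim 2\sqrt m$, and this deficit is of exactly the same order as the gap $l_0+2-\RB$. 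Whether $|\m_1(S^{(l_0+2)})|$ exceeds $\RB$ therefore depends delicately on the fractional part of $\sqrt m-\frac32$; this is precisely the content of Lemma~\ref{for:keylemma}, which shows that for $m\in J$ (e.g.\ $m=23$, where $\hat l=l_0+2$ by Table~1) one has $|\m_1(S^{(l_0+2)})|<\RB$, so your witness is in fact Ramanujan there. In other words, your argument would prove that no exceptional $m$ exists, contradicting the main theorem of the paper.

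The statement is still true, but one must choose a different witness. The paper's proof avoids the delicate regime entirely: it takes the \emph{valency-four} set $S=\{\pm\frac{m-1}{2},\pm\frac{m-3}{2}\}\in\cS_{m-4}$, computes $|\m_1(S)|=4\cos\frac{\pi}{m}\cos\frac{2\pi}{m}$, which increases to $4$ and already exceeds $\RB(S)=2\sqrt3$ at $m=15$. This gives a single explicit non-Ramanujan graph uniformly for all odd $m\ge 15$, with no floor-function case analysis. If you insist on staying in the family $S^{(l)}$, you would need $l\ge l_0+4$ (cf.\ Lemma~\ref{prop:nonRamanujanex}), and even then the verification requires $m\ge 39$ plus individual checks for $15\le m\le 37$ --- considerably more work than the paper's one-line computation.
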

\begin{proof}
 Remark that
 the cycle graph $C_{m}=X(\{\pm 1\})$ is Ramanujan,
% because $\mu(\{\pm 1\})=|2\cos{\frac{2\pi}{m}}|<2=\RB(\{\pm 1\})$.
 whence $X(S)$ is whenever $|S|=2$. % because $X(S)$ is isomorphic to $C_{m}$.
 Therefore, to prove the ``only if'' part, 
 it suffices to show that  
 there exists $S\in\cS_{m-4}$ such that $X(S)$ is not Ramanujan for $m\ge 15$.
 Actually, let $S=\{\pm\frac{m-1}{2},\pm\frac{m-3}{2}\}\in \cS_{m-4}$.
 % which is an element of $\cS_{m-4}$. % because $(\frac{m-1}{2},m)=1$.
 From \eqref{def:eigenvaluesa}, we have 
 $|\m_1(S)|=4\cos\frac{\pi}{m}\cos\frac{2\pi}{m}$, which is monotonic increasing. % with respect to $m$. 
 Hence, for $m\ge 15$, 
\[
 \m(S)
\ge |\m_1(S)|
=4\cos\frac{\pi}{m}\cos\frac{2\pi}{m}
\ge 4\cos\frac{\pi}{15}\cos\frac{2\pi}{15}
=3.57\ldots
%>3.46\ldots
>2\sqrt{3}
=\RB(S).
\]
 The converse is direct.
\end{proof}

\subsection{Trivial bound}

 We first show that
 there exists a lower bound of $\hat{l}$.

\begin{lem}
\label{lem:trivial}
 We have $\hat{l}\ge l_0$, where
\[
 l_0=2\Gauss{\sqrt{m}-\frac{3}{2}}+1.
\]
 Here $\Gauss{x}$ denotes the largest integer not exceeding $x$.
\end{lem}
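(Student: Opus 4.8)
The plan is to show that every circulant graph of covalency at most $l_0$ is automatically Ramanujan, using only the trivial bound on the non-trivial eigenvalues. First I would recall from \eqref{def:eigenvaluesa} that for $S\in\cS_l$ and $1\le j\le m-1$,
\[
 |\m_j(S)|=\Bigl|\sum_{b\in\bZ_m\setminus S}e^{\frac{2\pi ibj}{m}}\Bigr|\le |\bZ_m\setminus S|=l,
\]
so $\m(S)\le l$ for every $S\in\cS_l$. Consequently, if $l$ satisfies $l\le 2\sqrt{m-l-1}$, then $\m(S)\le l\le\RB(S)=2\sqrt{m-l-1}$ for all $S\in\cS_l$, i.e. every circulant of covalency $l$ is Ramanujan.

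Next I would solve the inequality $l\le 2\sqrt{m-l-1}$ for $l$. Squaring (both sides are positive for the relevant range), it becomes $l^2\le 4(m-l-1)$, i.e. $l^2+4l+4\le 4m$, i.e. $(l+2)^2\le 4m$, i.e. $l\le 2\sqrt{m}-2$. So every covalency $l$ with $l\le 2\sqrt{m}-2$ yields only Ramanujan circulants. Since $\cL=\{1,3,\dots,m-2\}$ consists of odd integers, the largest odd integer satisfying this is obtained by rounding down: writing $l=2t+1$ with $t\ge 0$, the condition $2t+1\le 2\sqrt{m}-2$ is $t\le\sqrt{m}-\tfrac32$, so the largest admissible $t$ is $\Gauss{\sqrt{m}-\tfrac32}$, giving $l_0=2\Gauss{\sqrt{m}-\tfrac32}+1$. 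To conclude $\hat l\ge l_0$ I would then observe that if every covalency $l\in\cL$ with $l\le l_0$ gives only Ramanujan circulants — which holds since each such odd $l$ satisfies $l\le l_0\le 2\sqrt m-2$ — then by definition of $\hat l$ we have $\hat l\ge l_0$, provided $l_0\le m-2$ so that $l_0\in\cL$; this last inequality holds for all $m\ge 3$ since $\Gauss{\sqrt m-\tfrac32}\le\tfrac{m-3}{2}$.

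There is essentially no serious obstacle here: the only mild point of care is the elementary verification that the floor in $l_0$ is exactly the right rounding (that $l_0$ is odd and is the largest odd integer $\le 2\sqrt m-2$), and the bookkeeping check that $l_0$ actually lies in $\cL$ so that the bound is meaningful. Everything else is the one-line triangle-inequality estimate $\m(S)\le l$ combined with solving a quadratic inequality.
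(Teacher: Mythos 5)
Your proposal is correct and follows essentially the same route as the paper: the triangle-inequality bound $\m(S)\le l$ for $S\in\cS_l$, followed by solving $l\le 2\sqrt{m-l-1}$ to get $l\le 2(\sqrt{m}-1)$ and identifying $l_0$ as the largest odd integer satisfying this. The only (harmless) differences are that you bound $|\m_j(S)|$ directly by the size of the complement rather than by $\min\{|S|,l\}$, thereby avoiding the paper's side condition $l<\frac{m}{2}$, and you add the explicit check that $l_0\in\cL$.
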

\begin{proof}
 From \eqref{def:eigenvaluesa}, for any $S\in\cS_{l}$ with $1\le l<\frac{m}{2}$, 
 we have % a trivial estimate 
 $|\m_j(S)|\le\min\{\sum_{a\in S}1,\sum_{b\in \mathbb{Z}_m\setminus S}1\}\le\min\{|S|,l(S)\}=l(S)=l$ for all $1\le j\le
 m-1$
 and hence $\mu(S)\le l$.
 Therefore, if $l\le\RB(S)=2\sqrt{m-l-1}$, equivalently $l\le 2(\sqrt{m}-1)$,
 then $X(S)$ is Ramanujan.
 Now, the claim follows because  
 $l_0$ coincides with the maximum odd integer satisfying $l<\frac{m}{2}$ and $l\le 2(\sqrt{m}-1)$.
% $l_0=\max\{l\in \cL\,|\,l\le \frac{m}{2}, \ l\le 2(\sqrt{m}-1) \}$
% (notice that all the elements in $\cL$ are odd).
\end{proof} 

 We call $l_0$ the {\it trivial bound of} $\hat{l}$.
 Note that, since $x-1<\Gauss{x}\le x$, we have  
\begin{equation}
\label{for:boundtrivialbound}
 2(\sqrt{m}-2)<l_0\le 2(\sqrt{m}-1)
\end{equation}
 and hence $l_0\sim 2\sqrt{m}$ as $m\to+\infty$.
 See the table below for the explicit value of $l_0$ and $\hat{l}$ for small $m$
 (remark that $\hat{l}=m-2$ for $3\le m\le 13$ from Lemma~\ref{prop:finite}).
 
\begin{table}[htmb]
\begin{center}
{\renewcommand\arraystretch{1.3}
\begin{tabular}{c||c|c|c|c|c|c||c|c|c|c|c|c|c|c}
       $m$ & 3 & 5 & 7 & 9 & 11 & 13 & 15 & 17 & 19 & 21 & 23 & 25 & 27 & 29 \\
\hline
\hline
     $l_0$ &  &  &  & &  &  &  5 &  5 &  5 &  7 &  7 &  7 &  7 &  7 \\
\hline
 $\hat{l}$ & 1 & 3 & 5 & 7 &  9 & 11 &  7 &  7 &  7 &  7 &  9 &  9 &  7 &  9 
\end{tabular}
}

\ \\[10pt]

{\renewcommand\arraystretch{1.3}
\begin{tabular}{c||c|c|c|c|c|c|c|c|c|c|c|c|c}
       $m$ & 31 & 33 & 35 & 37 & 39 & 41 & 43 & 45 & 47 & 49 & 51 & 53 & 55  \\
\hline
\hline
     $l_0$ &  9 &  9 &  9 &  9 &  9 &  9 & 11 & 11 & 11 & 11 & 11 & 11 & 11 \\ 
\hline
 $\hat{l}$ &  9 &  9 & 11 & 11 &  9 & 11 & 11 & 11 & 13 & 13 & 11 & 13 & 13
\end{tabular}
}

\caption{$l_0$ and $\hat{l}$ for small $m$.}
\end{center}
\end{table}

\subsection{Beyond the trivial bound}

 As you find from Table~1,
 we can indeed prove the following theorem.
% which asserts that the difference between $\hat{l}$ and $l_0$ is at most two.

\begin{thm}
\label{thm:oddmaxbound}
 There exists $\varepsilon\in\{0,2\}$ such that $\hat{l}=l_0+\varepsilon$ for $m\ge 15$.
\end{thm}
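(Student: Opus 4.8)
The plan is to prove Theorem~\ref{thm:oddmaxbound} by showing two things: first, that $\hat l \geq l_0$ always holds (this is exactly Lemma~\ref{lem:trivial}), and second, that $\hat l \leq l_0 + 2$, i.e. there exists $S \in \cS_{l_0+4}$ (or more generally in $\cS_l$ for some odd $l \leq l_0+4$ with $X(S)$ not Ramanujan), together with a parity observation forcing the gap to land in $\{0,2\}$ rather than $\{0,1,2,\dots\}$. The key structural point is that $\cL = \{1,3,5,\dots,m-2\}$ consists only of odd integers, so $\hat l$ and $l_0$ are both odd; hence $\hat l - l_0$ is even, and once we show $\hat l \leq l_0+2$ we are done. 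So the whole content is the upper bound $\hat l < l_0 + 4$, i.e. producing, for every $m \geq 15$, a Cayley subset $S$ with covalency $l_0 + 4$ (which is $< m/2$ for $m$ large, and for small $m$ near $13$ can be checked by hand via Table~1) whose attached circulant graph fails the Ramanujan inequality $\mu(S) \leq 2\sqrt{m - (l_0+4) - 1}$.

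The natural candidate is $S = S^{(l_0+4)}$ as in \eqref{def:Sl}, the ``interval complement'' which removes the block $\{0,\pm1,\dots,\pm\frac{l_0+3}{2}\}$, since by \eqref{for:eigenSl} its non-trivial eigenvalues have the clean Dirichlet-kernel form
\[
 \m_j\bigl(S^{(l_0+4)}\bigr) = -\frac{\sin\frac{\pi j (l_0+4)}{m}}{\sin\frac{\pi j}{m}}, \qquad 1 \leq j \leq m-1.
\]
I would lower-bound $\mu(S^{(l_0+4)})$ by picking a suitable small index $j$ — presumably $j=1$, giving $|\m_1| = \frac{\sin\frac{\pi(l_0+4)}{m}}{\sin\frac{\pi}{m}}$ — and compare it against $\RB = 2\sqrt{m-l_0-5}$. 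Using the trivial bounds \eqref{for:boundtrivialbound}, $2(\sqrt m - 2) < l_0 \leq 2(\sqrt m -1)$, so $l_0 + 4 \approx 2\sqrt m + 2$ and the argument $\frac{\pi(l_0+4)}{m} \approx \frac{2\pi}{\sqrt m}$ is small; then $\sin\frac{\pi(l_0+4)}{m} \approx \frac{\pi(l_0+4)}{m}$ while $\sin\frac{\pi}{m}\approx \frac{\pi}{m}$, so $|\m_1| \approx l_0 + 4 \approx 2\sqrt m + 2$, which exceeds $\RB \approx 2\sqrt{m - 2\sqrt m} = 2\sqrt m \sqrt{1 - 2/\sqrt m} \approx 2\sqrt m - 2$. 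So there is a genuine gap of roughly $4$, and the inequality $|\m_1| > \RB$ should hold for all large $m$. The finitely many remaining $m$ (say $15 \leq m \leq$ some explicit bound) are dispatched by direct computation, as already tabulated.

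The main obstacle is making the asymptotic comparison rigorous and uniform: the estimate $\sin x \geq x - x^3/6$ from below and $\sin x \leq x$ from above must be combined with the somewhat loose double inequality \eqref{for:boundtrivialbound} for $l_0$, and one must be careful that $l_0$ jumps by $2$ at each square value of $m$ while $\sqrt m$ varies continuously, so the ratio $(l_0+4)/(2\sqrt m)$ oscillates and does not tend monotonically. I would therefore phrase the comparison in terms of $n := \lfloor \sqrt m - \tfrac32 \rfloor$, so $l_0 = 2n+1$ and $2n+1 \leq 2\sqrt m - 2 < 2n+3$, i.e. $m \in \bigl[(n+1)^2+\tfrac14,\,(n+2)^2+\tfrac14\bigr)$ roughly, and then show $|\m_1(S^{(2n+5)})| > 2\sqrt{m - 2n - 6}$ for all $n$ with $m$ in that window; squaring and clearing denominators reduces this to a polynomial/trigonometric inequality in $n$ that is checked to hold for $n$ large (with small $n$ done numerically). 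A secondary point to verify is that $l_0 + 4 < m/2$ so that $S^{(l_0+4)}$ is genuinely in the regime where $\cS_{l_0+4} \neq \emptyset$ and the covalency is the minority side; since $l_0 + 4 = O(\sqrt m)$ this fails only for bounded $m$, again covered by Table~1. Finally I would note that the displayed refinement ``$\varepsilon \in \{0,2\}$'' is then immediate: $\hat l \in \{l_0, l_0+1, l_0+2, l_0+3\}$ by the two bounds, but oddness of $\hat l$ and $l_0$ rules out $l_0+1$ and $l_0+3$.
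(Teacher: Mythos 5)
Your proposal is correct and follows essentially the same route as the paper: the lower bound $\hat{l}\ge l_0$ from Lemma~\ref{lem:trivial}, the upper bound by showing $X(S^{(l_0+4)})$ is not Ramanujan via the comparison of $|\mu_1(S^{(l_0+4)})|=\sin\frac{\pi(l_0+4)}{m}/\sin\frac{\pi}{m}$ with $\RB$ (asymptotically for large $m$, individually for $15\le m\le 37$), and the parity of $\cL$ to force $\varepsilon\in\{0,2\}$. The paper packages the asymptotic step as Lemma~\ref{prop:nonRamanujanex} (valid for all $h\ge 2$, giving the bound for $m\ge 39$), but the underlying estimate is the one you describe.
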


 To prove the theorem,
 it is sufficient to show that there exists $S\in\cS_{l_0+4}$ such that $X(S)$ is not Ramanujan.
 Actually, for large $m$, we claim that $X(S^{(l_0+4)})$ is not Ramanujan where $S^{(l)}$ is defined in \eqref{def:Sl}. 
 More strongly, we show the following 

\begin{lem}
\label{prop:nonRamanujanex}
 $X(S^{(l_0+2h)})$ is not Ramanujan if $m\ge 39$ and $2\le h\le \Gauss{\frac{1}{4}(\sqrt{m}-2)^2}$.
\end{lem}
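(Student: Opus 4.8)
The plan is to exhibit, for each admissible $h$, a specific non-trivial eigenvalue $\m_j\bigl(S^{(l_0+2h)}\bigr)$ whose absolute value exceeds the Ramanujan bound $\RB\bigl(S^{(l_0+2h)}\bigr)=2\sqrt{m-l_0-2h-1}$. From \eqref{for:eigenSl} we have the clean closed form
\[
 \m_j\bigl(S^{(l)}\bigr)=-\frac{\sin\frac{\pi jl}{m}}{\sin\frac{\pi j}{m}},\qquad 1\le j\le m-1,
\]
so the task reduces to finding a good choice of $j=j(m,h)$. Writing $l=l_0+2h$, the natural candidate is the index $j$ for which $\frac{jl}{m}$ is as close as possible to a half-integer, since then $\bigl|\sin\frac{\pi jl}{m}\bigr|$ is near $1$ while $\bigl|\sin\frac{\pi j}{m}\bigr|$ is small; concretely I would take $j$ near $\frac{m}{2l}$ (so $j=\bGauss{\frac{m}{2l}}$ or the nearest integer giving $jl$ closest to $\frac{m}{2}$), which makes $\bigl|\m_j(S^{(l)})\bigr|\approx\frac{1}{\sin(\pi j/m)}\approx\frac{m}{\pi j}\approx\frac{2l}{\pi}$.

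The main steps are then: first, fix that choice of $j$ and record the bound $jl\le\frac{m+l}{2}$ (or the analogous two-sided estimate), from which $\bigl|\sin\frac{\pi jl}{m}\bigr|\ge\sin\frac{\pi l}{2(m+l)}$ or a similar explicit quantity bounded below; second, use $\sin x\le x$ in the denominator to get $\bigl|\sin\frac{\pi j}{m}\bigr|\le\frac{\pi j}{m}$, yielding a lower bound
\[
 \bigl|\m_j\bigl(S^{(l)}\bigr)\bigr|\ge\frac{m}{\pi j}\,\sin\frac{\pi jl}{m}\,;
\]
third, substitute $l=l_0+2h$ and the bound $j\le\frac{m}{2l}+1$, together with the hypotheses $2\le h\le\bGauss{\frac14(\sqrt{m}-2)^2}$ and \eqref{for:boundtrivialbound} (which controls $l_0$ in terms of $\sqrt m$), to reduce the desired inequality $\bigl|\m_j(S^{(l)})\bigr|>2\sqrt{m-l-1}$ to a purely elementary inequality in $m$ and $h$; fourth, verify that inequality — monotonicity in $h$ should let one reduce to the extreme values $h=2$ and $h=\bGauss{\frac14(\sqrt m-2)^2}$, and monotonicity in $m$ handles the tail, leaving a finite range $39\le m\le M_0$ to be checked numerically.

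The hard part will be step three: the chosen index $j$ is defined by a floor function, so $\frac{jl}{m}$ is only approximately $\frac12$, and one must keep careful track of the error term $\delta=\frac{jl}{m}-\frac12$ and ensure that $\bigl|\sin\frac{\pi jl}{m}\bigr|=\cos(\pi\delta)$ stays bounded away from $0$ uniformly — this is delicate precisely when $l$ is small, i.e.\ near $m=39$, which is why the lemma is stated only for $m\ge 39$ and $h\ge 2$ (for $h=1$, i.e.\ covalency $l_0+2$, the argument fails, consistent with the $\varepsilon\in\{0,2\}$ dichotomy of Theorem~\ref{thm:oddmaxbound}). A secondary nuisance is confirming $S^{(l)}\in\cS_l$ is still a valid Cayley subset, i.e.\ that $l=l_0+2h\le m-2$ and $l<\tfrac m2$ in the relevant range, which follows from the upper bound on $h$ together with \eqref{for:boundtrivialbound}. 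Once the elementary inequality is isolated, the remaining finite check is routine and can be relegated to a computer-assisted verification or a short table.
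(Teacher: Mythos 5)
Your overall strategy (exhibit one explicit eigenvalue of $X(S^{(l)})$, $l=l_0+2h$, exceeding $\RB$) is the right one, but your choice of test index $j$ is wrong, and the argument built on it fails exactly where the lemma is needed most. With $j\approx\frac{m}{2l}$ your own estimate gives $\bigl|\m_j(S^{(l)})\bigr|\approx\frac{2l}{\pi}$. In the critical case $h=2$ (the case invoked in the proof of Theorem~\ref{thm:oddmaxbound}) one has $l=l_0+4\sim 2\sqrt{m}$ by \eqref{for:boundtrivialbound}, so $\frac{2l}{\pi}\sim\frac{4}{\pi}\sqrt{m}\approx 1.27\sqrt{m}$, whereas $\RB(S^{(l)})=2\sqrt{m-l-1}\sim 2\sqrt{m}$. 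The eigenvalue you single out is therefore asymptotically \emph{below} the Ramanujan bound, and no bookkeeping of the error $\delta$ can rescue the inequality; your candidate only becomes large enough when $h$ is of order $\sqrt{m}$ or more, near the top of the admissible range where $l$ approaches $m/2$. The heuristic ``make $\sin\frac{\pi jl}{m}$ close to $1$'' is misleading here because it sacrifices too much in the denominator: one always has $\bigl|\m_j(S^{(l)})\bigr|\le l$, and in the regime $l\ll m$ this ceiling is essentially attained already at $j=1$, which beats your $\frac{2l}{\pi}$ by the factor $\frac{\pi}{2}$.

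The paper's proof indeed takes $j=1$. Since $l_0+2h<\frac{m}{2}$ throughout the stated range of $h$, the quantity $\frac{\sin(\pi l/m)}{\sin(\pi/m)}$ is increasing in $l$ there, and \eqref{for:boundtrivialbound} gives $l=l_0+2h>2\bigl(\sqrt{m}-(2-h)\bigr)$, whence
\[
 \bigl|\m_1(S^{(l_0+2h)})\bigr|-\RB(S^{(l_0+2h)})
 >\frac{\sin\frac{2\pi(\sqrt{m}-(2-h))}{m}}{\sin\frac{\pi}{m}}-2\bigl(\sqrt{m}-1\bigr)
 =2(h-1)-\frac{4\pi^2}{3}\frac{1}{\sqrt{m}}+O(m^{-1}),
\]
which is positive for $h\ge 2$ once $m\ge 39$ (a finite check settles the explicit constant). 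The gain $2(h-1)$ comes from $\bigl|\m_1\bigr|\approx l\approx 2\sqrt{m}+2h-4$ versus $\RB<2(\sqrt{m}-1)$; this margin of order $h$ is precisely what your choice of $j$ throws away. If you replace your index by $j=1$, the rest of your plan (monotonicity in $h$ and $m$, finite numerical check for small $m$) goes through and coincides with the paper's argument.
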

\begin{proof}
 Assume that $\Gauss{\frac{1}{4}(\sqrt{m}-2)^2}\ge 2$.
 Using \eqref{for:boundtrivialbound},
 we have $2(\sqrt{m}-(2-h))<l_0+2h<\frac{m}{2}$.
% since $\m(S^{(l_0+2h)})\ge |\m^{1}(S^{(l_0+2h)})|$.
 Then the expression \eqref{for:eigenSl} together with the inequality above
 leads us to the evaluation 
\begin{align}
\label{for:estimate}
 |\m_1(S^{(l_0+2h)})|-\RB(S^{(l_0+2h)})
&>\frac{\sin{\frac{2\pi(\sqrt{m}-(2-h))}{m}}}{\sin{\frac{\pi}{m}}}-2\bigl(\sqrt{m}-1\bigr)\\
&=2(h-1)-\frac{4\pi^2}{3}\frac{1}{\sqrt{m}}+O(m^{-1}).\nonumber
\end{align}
 This shows that $\m(S^{(l_0+2h)})\ge |\m_1(S^{(l_0+2h)})|>\RB(S^{(l_0+2h)})$ for $m\gg 0$.
 In fact, one can check that the right hand side of \eqref{for:estimate} is positive whenever $m\ge 39$.
\end{proof}

\begin{proof}
[Proof of Theorem~\ref{thm:oddmaxbound}]
 From Lemma~\ref{prop:nonRamanujanex}, we know that $X(S^{(l_0+4)})$ is not Ramanujan for $m\ge 39$.
 Moreover, one can see that the situations for $15\le m\le 37$ are the same as above
 by checking $|\m_1(S^{(l_0+4)})|>\RB(S^{(l_0+4)})$ individually.
\end{proof}

%\begin{remark}
% We can actually show that $\m(S^{(l)})=|\m^{1}(S^{(l)})|$ for all odd $l$.
% Namely, the inequality
% $\bigl|\frac{\sin{\frac{\pi l}{m}}}{\sin{\frac{\pi }{m}}}\bigr|\ge \bigl|\frac{\sin{\frac{\pi k l}{m}}}{\sin{\frac{\pi k}{m}}}\bigr|$
% hold for all $1\le k\le m-1$.
%\end{remark}

 We remark that the above discussion does not work for the case $h=1$, that is, $l=l_0+2$.
% in the determination of whether $X(S^{(l_0+2)})$ is Ramanujan or not.

\subsection{A criterion for ordinary $m$}

 From Theorem~\ref{thm:oddmaxbound},
 our task is to determine the number $\varepsilon\in\{0,2\}$ satisfying $\hat{l}=l_0+\varepsilon$ for a given $m$.
 Let us call $m$ {\it ordinary} if $\varepsilon=0$ and {\it exceptional} otherwise.
 This is based on the numerical fact that
 there are much more $m$ of the former type rather than the latter.
 The aim of this subsection is to give a criterion for ordinary $m$.

 Let $k\in\mathbb{Z}_{>0}$ and put $I_k=\{x\in\bR\,|\,\Gauss{\sqrt{x}-\frac{3}{2}}=k\}=[k^2+3k+\frac{9}{4},k^2+5k+\frac{25}{4})$.
 We now study an interpolation function $d(x)$ for the difference between $|\mu_{1}(S^{(l_0+2)})|$ and
 $\RB(S^{(l_0+2)})$ on $m\in I_k\cap (2\mathbb{Z}+1)$,
 that is, 
\[
 d(x)
=\frac{\sin{\frac{\pi (2k+3)}{x}}}{\sin{\frac{\pi}{x}}}-2\sqrt{x-2k-4}, \qquad x\in I_k.
\]
 Notice that $d(m)>0$ for $m\in I_k\cap (2\mathbb{Z}+1)$ implies that $X(S^{(l_0+2)})$ is not Ramanujan and hence $m$ is ordinary.
 Therefore, we are interested in the sign of the values of $d(x)$ on $I_{k}\cap (2\mathbb{Z}+1)$.
 The following lemma is crucial in our study.

\begin{lem}
\label{for:keylemma}
 Let $m\in I_k\cap (2\mathbb{Z}+1)$.
\begin{itemize}
 \item[$\mathrm{(1)}$] $d(m)<0$ for all $m\in I_k\cap (2\mathbb{Z}+1)$ when $k=1,2,3$.  
 \item[$\mathrm{(2)}$] $d(m)<0$ if and only if $m\in [k^2+5k-c,k^2+5k+5]$ with
\[
 c=
\begin{cases}
 3 & 4\le k\le 18, \\
% 4 &  9\le k\le 18,\\
 5 & k\ge 19.
% 6 & 49 \le k. 
\end{cases}
\]   
%\begin{itemize}
% \item[$\mathrm{(i)}$\ ] $k^2+5k-3\le m\le k^2+5k+6$ when $4\le k\le 8$,
% \item[$\mathrm{(ii)}$\,] $k^2+5k-4\le m\le k^2+5k+6$ when $9\le k\le 18$,
% \item[$\mathrm{(iii)}$] $k^2+5k-5\le m \le k^2+5k+6$ when $19\le k\le 48$,
% \item[$\mathrm{(iv)}$] $k^2+5k-6\le m \le k^2+5k+6$ when $k\ge 49$.
%\end{itemize} 
\end{itemize}
\end{lem}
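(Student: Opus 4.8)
The plan is to analyze the sign of the interpolation function
\[
 d(x)=\frac{\sin{\frac{\pi (2k+3)}{x}}}{\sin{\frac{\pi}{x}}}-2\sqrt{x-2k-4},\qquad x\in I_k,
\]
directly, treating $k$ as a parameter and $x$ as ranging over the interval $I_k=[k^2+3k+\tfrac94,\,k^2+5k+\tfrac{25}{4})$. The first step is to get good control on the trigonometric quotient. Writing $t=\tfrac{\pi}{x}$, which is small (of order $k^{-2}$) on $I_k$, I would use the expansion $\frac{\sin((2k+3)t)}{\sin t}=(2k+3)-\tfrac{(2k+3)^3-(2k+3)}{6}t^2+O(t^4)$, or more usefully a closed two-sided bound: since $\frac{\sin(Nt)}{\sin t}$ is decreasing in $t$ on $(0,\pi/N)$, one has the clean inequalities $ (2k+3)\cos^2((k+1)t)\le \frac{\sin((2k+3)t)}{\sin t}\le (2k+3)$ valid on the relevant range (this follows from $\sin((2k+3)t)=\sin((2k+2)t)\cos t+\cos((2k+2)t)\sin t$ together with standard Chebyshev-type estimates). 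This reduces $d(x)$ to comparing $(2k+3)$ (up to an explicitly controlled error of size $O(k^{-1})$) against $2\sqrt{x-2k-4}$.

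The second step is the elementary observation that $g(x):=2\sqrt{x-2k-4}$ is increasing in $x$, so $d(x)$ is, up to the small trigonometric correction, a decreasing function of $x$ on $I_k$; hence on the finite set $I_k\cap(2\mathbb{Z}+1)$ the sign of $d$ changes at most once, from $+$ to $-$, and $d(m)<0$ precisely for $m$ in an upper subinterval of $I_k$. To locate the threshold, I would solve $2\sqrt{x-2k-4}=2k+3$, giving $x=k^2+5k+\tfrac{25}{4}$, i.e. exactly the right endpoint of $I_k$. So to first approximation $d$ is negative only very near the top of $I_k$; the trigonometric correction term, which is negative and of size roughly $\tfrac{(2k+3)\pi^2(k+1)^2}{x^2}\approx \tfrac{\pi^2}{2k}$, pushes the crossover point slightly to the left, by an amount that I expect to be $O(1)$ in $x$ (since near the endpoint $g'(x)\approx \tfrac{1}{2k+3}$, a vertical shift of $\tfrac{\pi^2}{2k}$ moves the root left by about $\tfrac{\pi^2}{2k}\cdot(2k+3)\approx \pi^2$). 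This is what produces the window $[k^2+5k-c,\,k^2+5k+5]$ with $c$ a small absolute constant, and the transition value $c=3\to c=5$ at $k=19$ is exactly the point where the accumulated correction crosses from ``less than $4$'' to ``between $4$ and $6$'' in the relevant integer count.

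For part (1), the cases $k=1,2,3$ correspond to $m$ in the explicit ranges forced by $I_k\cap(2\mathbb{Z}+1)$ (namely $m\le 13$, $m\in\{15,17,19\}$, $m\in\{21,\dots,27\}$ roughly), and here I would simply evaluate $d(m)$ numerically at each of the finitely many odd integers and check $d(m)<0$ directly — no asymptotics needed. For part (2), once the two-sided trigonometric bound is in place, the statement ``$d(m)<0 \iff m\ge k^2+5k-c$'' becomes a pair of inequalities to verify: (i) $d(k^2+5k-c-2)\ge 0$ and (ii) $d(k^2+5k-c)<0$, each uniformly in $k$ in the stated range, together with monotonicity to fill in. These reduce, after clearing the square root, to polynomial inequalities in $k$ that hold for all $k\ge 4$ (resp. $k\ge 19$) and can be checked by a finite computation plus an asymptotic tail estimate. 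The main obstacle I anticipate is exactly this uniformity: the trigonometric correction is only of order $k^{-1}$, comparable to the ``granularity'' $g'(x)\approx (2k+3)^{-1}$ of how the square root separates consecutive odd integers near the endpoint, so the bounds on $\frac{\sin((2k+3)t)}{\sin t}$ must be sharp to within $o(k^{-1})$ — a crude two-term Taylor bound with an unexamined remainder will not distinguish $c=3$ from $c=5$. I would therefore invest effort in a genuinely two-sided estimate of the trigonometric quotient (for instance via the product formula $\frac{\sin((2k+3)t)}{\sin t}=\prod_{j=1}^{2k+2}\bigl(1-\tfrac{\sin^2 t}{\sin^2(\pi j/(2k+3))}\bigr)^{\pm}$-type identities, or simply monotonicity in $t$ combined with the exact value at a nearby rational multiple of $\pi$), and only then push through the resulting polynomial inequalities in $k$, handling $4\le k\le 18$ and $k\ge 19$ separately and absorbing small $k$ by direct check.
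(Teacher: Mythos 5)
Your overall strategy is the same as the paper's: show that $d$ is monotone decreasing on $I_k$, locate the sign change by expanding $d$ at the points $x=k^2+5k+c$ in powers of $k^{-1}$, and dispose of small and moderate $k$ by finite computation. Two points in your plan, however, would derail the argument if carried out as written.

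First, the quantitative heuristic that locates the crossover is off by a constant factor, and that factor is exactly what the lemma turns on. With $N=2k+3$ and $t=\pi/x$ one has $\frac{\sin(Nt)}{\sin t}=N-\frac{N(N^2-1)}{6}t^2+O(N^5t^4)$, so the deficit below $2k+3$ is $\frac{4\pi^2}{3}k^{-1}+O(k^{-2})$ --- not $\frac{\pi^2}{2}k^{-1}$ as in your estimate, nor the $2\pi^2k^{-1}$ that your proposed lower bound $(2k+3)\cos^2((k+1)t)$ would yield. Since $2\sqrt{x-2k-4}=2k+3-\frac{25-4c}{4}k^{-1}+O(k^{-2})$ at $x=k^2+5k+c$, the sign of $d(k^2+5k+c)$ for large $k$ is decided by comparing $\frac{25-4c}{4}$ with $\frac{4\pi^2}{3}$: one gets $d<0$ exactly when $c>\frac{75-16\pi^2}{12}=-6.90\ldots$, which is what forces the window to reach down to $k^2+5k-5$ asymptotically. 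Your shift of ``about $\pi^2$'' places the crossover near $-3.6$ and would predict $c=3$ for \emph{all} large $k$, contradicting the statement for $k\ge 19$. Relatedly, the concrete two-sided bound you write down is not sharp to within $o(k^{-1})$ --- as you yourself suspect --- so it cannot separate the two regimes; one genuinely needs third-order Taylor bounds such as $u-\frac{u^3}{6}<\sin u<u$ and $1-\frac{u^2}{2}<\cos u<1-\frac{u^2}{2}+\frac{u^4}{24}$ with explicit remainders, which is what the paper uses, together with a finite check over $9\le k\le 48$ before the asymptotic takes over (the switch from $c=3$ to $c=5$ at $k=19$ is settled numerically, not by the leading term).

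Second, monotonicity of $d$ on $I_k$ does not follow from $2\sqrt{x-2k-4}$ being increasing: the quotient $\frac{\sin(\pi(2k+3)/x)}{\sin(\pi/x)}$ is \emph{also} increasing in $x$ (it is decreasing in $t=\pi/x$ on the relevant range), so $d$ is a difference of two increasing functions, and you must actually verify that the derivative of the square-root term, of size about $k^{-1}$, dominates that of the trigonometric term, of size about $k^{-3}$. This is routine but cannot be waved away, and it is only proved for $k$ not too small (the paper establishes $d'<0$ for $k\ge 9$ and treats $k\le 8$ by direct evaluation). With these two repairs your plan coincides with the paper's proof.
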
 
\begin{proof}
 The assertions for $k\le 8$ are direct.
 Let $k\ge 9$.
 We first claim that $d(x)$ is monotone decreasing on $I_k$.
 Actually,
 using the inequalities $x-\frac{x^3}{6}<\sin{x}<x$ and $1-\frac{x^2}{2}<\cos{x}<1-\frac{x^2}{2}+\frac{x^4}{24}$,
 we have
\begin{align*}
 d'(x)
&=-\frac{1}{\sqrt{x-2k-4}}
+\frac{\pi}{x^2(\sin{\frac{\pi}{x}})^2}\Biggl(-(2k+3)\cos{\frac{\pi (2k+3)}{x}}\sin{\frac{\pi}{x}}+\sin{\frac{\pi (2k+3)}{x}}\cos{\frac{\pi}{x}}\Biggr)\\
%&<-\frac{1}{\sqrt{x}}
%+\frac{\pi}{x^2}\frac{1}{\bigl(\frac{\pi}{x}-\frac{1}{6}\bigl(\frac{\pi}{x}\bigr)^3\bigr)^2}\Biggl(-C\Bigl(1-\frac{1}{2}\Bigl(\frac{\pi C}{x}\Bigr)^2\Bigr)
%\Bigl(\frac{\pi}{x}-\frac{1}{6}\Bigl(\frac{\pi}{x}\Bigr)^3\Bigr)
%+\frac{\pi C}{x}\Bigl(1-\frac{1}{2}\Bigl(\frac{\pi}{x}\Bigr)^2+\frac{1}{24}\Bigl(\frac{\pi}{x}\Bigr)^4\Bigr)\Biggr)\\
&<-\frac{1}{\sqrt{x}}+
\frac{\pi^2 (2k+3)(3(2k+3)^2-2)}{6x^3\bigl(1-\frac{1}{6}\bigl(\frac{\pi}{x}\bigr)^2\bigr)^2}\Biggl(1-\frac{\pi^2(2(2k+3)^2-1)}{4x^2(3(2k+3)^2-2)}\Biggr)\\
&<-\frac{1}{\sqrt{x}}+
\frac{\pi^2(2k+3)^3}{2x^3\bigl(1-\frac{1}{6}\bigl(\frac{\pi}{x}\bigr)^2\bigr)^2}.
\end{align*}
 Here we have clearly $2k+3<2(k+3)$, 
% since $x\in I_k=[k^2+3k+\frac{9}{4},k^2+5k+\frac{25}{4})$,
 $x>\pi$ and $k(k+3)<x<(k+3)^2$ for $x\in I_k$.
 Therefore,  
\begin{align*}
 d'(x)
&<-\frac{1}{\sqrt{x}}+\frac{144\pi^2(k+3)^3}{25x^3}
<-\frac{1}{k+3}+\frac{144\pi^2}{25}\frac{1}{k^3}<0
\end{align*}
% Notice that the last inequality follows because $k^3-\frac{144\pi^2}{25}(k+3)>0$ if
 for $k\ge 9$.
 This shows the assertion. 
 Next we investigate the value $D(k)=D(k,c)=d(k^2+5k+c)$
%, that is, 
%\begin{equation*}
% F_0(k)
%=\frac{\sin{\frac{\pi (2k+3)}{k^2+5k+c}}}{\sin{\frac{\pi}{k^2+5k+c}}}-2\sqrt{k^2+3k+c-4},
%\end{equation*}
 where $c\le 6$ is an integer not depending on $k$.
 It is easy to see that   
\[
 D(k)
=\frac{3c'-16\pi^2}{12}k^{-1}+O(k^{-2}),
\]
 where $c'=25-4c$.
 This shows that $D(k)<0$ for $k\gg 0$ if the leading coefficient is negative,
 that is, $c>\frac{75-16\pi^2}{12}=-6.90\ldots$.
 Actually, for $k\ge 49$, one can see that $D(k,-7)>0$ and $D(k,-6)<0$,
 whence, together with the monotoneness of $d(x)$, we obtain the desired claim for $k\ge 49$. 
 The rest of assertions, that is, for $9\le k\le 48$, are also checked individually.
 This completes the proof because $k^2+5k+c$ is odd if and only if $c$ is.
\end{proof}

 From this lemma, one can obtain a criterion for ordinary $m$.

\begin{thm}
\label{thm:criterionSl0+2}
 Let $m\ge 15$ be an odd integer.
 Put 
\[
 J=\bigl\{2n+1\,\bigr|\,7\le n\le 14\bigr\}\sqcup \bigsqcup_{c\in\{\pm 1,\pm 3,\pm 5\}} J_c,
\]
 where 
\[
 J_c
=
\begin{cases}
 \{k^2+5k+c\,|\,k\ge 4\} & c\in\{\pm 1,\pm 3,5\},\\
 \{k^2+5k-5\,|\,k\ge 19\} & c=-5.
\end{cases}
\]
 Then, $m$ is ordinary if $m\notin J$.
\end{thm}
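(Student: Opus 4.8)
The plan is to deduce the statement from Theorem~\ref{thm:oddmaxbound} together with Lemma~\ref{for:keylemma}, by showing that the single circulant $X(S^{(l_0+2)})$ already fails to be Ramanujan whenever $m\notin J$. First I would set up the dictionary between $m$ and the interpolation function $d$. For odd $m\ge 15$ there is a unique integer $k\ge 2$ with $m\in I_k$, and then $\Gauss{\sqrt{m}-\frac{3}{2}}=k$ gives $l_0=2k+1$, hence $l_0+2=2k+3$; since $k^2+3k+\frac94-(2k+3)=k^2+k-\frac34>0$, we have $2k+3<m$ on $I_k$, so $S^{(l_0+2)}=S^{(2k+3)}$ is a legitimate element of $\cS_{l_0+2}$ and, by \eqref{for:eigenSl},
\[
 \bigl|\m_1(S^{(l_0+2)})\bigr|=\frac{\sin\frac{\pi(2k+3)}{m}}{\sin\frac{\pi}{m}},\qquad \RB(S^{(l_0+2)})=2\sqrt{m-2k-4},
\]
where the positivity of the numerator uses $0<2k+3<m$. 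Hence $\bigl|\m_1(S^{(l_0+2)})\bigr|-\RB(S^{(l_0+2)})=d(m)$, so if $d(m)>0$ then $\m(S^{(l_0+2)})\ge\bigl|\m_1(S^{(l_0+2)})\bigr|>\RB(S^{(l_0+2)})$, i.e.\ $X(S^{(l_0+2)})$ is not Ramanujan. Since $S^{(l_0+2)}\in\cS_{l_0+2}$, this forces $\hat{l}<l_0+2$, and then Theorem~\ref{thm:oddmaxbound} gives $\hat{l}=l_0$, i.e.\ $m$ is ordinary. Thus it suffices to prove the implication $m\notin J\Longrightarrow d(m)>0$.

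For that I would simply match the ``$d(m)<0$'' sets supplied by Lemma~\ref{for:keylemma} against $J$. If $m\in I_k$ with $k\in\{2,3\}$, Lemma~\ref{for:keylemma}(1) gives $d(m)<0$; but the odd integers $m\ge 15$ lying in $I_2\cup I_3$ are exactly $\{15,17,19,21,23,25,27,29\}=\{2n+1\mid 7\le n\le 14\}\subset J$, so under the hypothesis $m\notin J$ we may assume $k\ge 4$. Since $k^2+5k=k(k+5)$ is always even, the odd integers in the interval $[k^2+5k-c,k^2+5k+5]$ of Lemma~\ref{for:keylemma}(2) are precisely the numbers $k^2+5k+j$ with $j$ odd and $-c\le j\le 5$; with $c=3$ for $4\le k\le 18$ these are the five numbers $j\in\{\pm1,\pm3\}$ together with $j=5$, and with $c=5$ for $k\ge 19$ one gains in addition $j=-5$. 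A direct check using $I_k=[k^2+3k+\frac94,k^2+5k+\frac{25}{4})$ and $k\ge 4$ shows each such $k^2+5k+j$ does lie in $I_k$, so no boundary point is lost, and taking the union over all $k\ge 4$ recovers exactly $\bigsqcup_{c\in\{\pm1,\pm3,\pm5\}}J_c$ (the family $c=-5$ appearing only for $k\ge 19$, as in the definition of $J_{-5}$). Consequently, for $m\in I_k\cap(2\bZ+1)$ with $k\ge 4$ and $m\notin\bigsqcup_cJ_c$, Lemma~\ref{for:keylemma}(2) yields $d(m)\ge 0$; and in fact $d(m)>0$, because the proof of Lemma~\ref{for:keylemma} exhibits $d$ as strictly decreasing on $I_k$ for $k\ge 9$ with strictly nonzero values at the tested endpoints (the range $4\le k\le 8$ being a finite verification), so $d$ does not vanish at an integer outside $\bigsqcup_cJ_c$. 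Combining the two cases, $m\notin J$ implies $d(m)>0$, and the theorem follows.

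The analytic content is entirely contained in Lemma~\ref{for:keylemma}; what remains is the bookkeeping of lining up the integer points of its real intervals with the six polynomial families $J_c$, and the only place I expect to have to be careful is precisely this matching --- checking that the parity of $k^2+5k$ and the switch of $c$ at $k=19$ make the intervals $[k^2+5k-c,k^2+5k+5]$ contribute exactly $j\in\{\pm1,\pm3\}$ for all $k\ge4$, $j=5$ for all $k\ge4$, and $j=-5$ only for $k\ge19$, with every such $k^2+5k+j$ genuinely interior to $I_k$ so that nothing spills over an endpoint. A minor but genuine point to spell out is the strictness $d(m)>0$ rather than $d(m)\ge 0$: since the Ramanujan inequality $\m\le\RB$ permits equality, one needs $d(m)=0$ excluded at integer arguments, which is read off from the strict monotonicity and the strict sign conditions at the tested endpoints already present in the proof of Lemma~\ref{for:keylemma}.
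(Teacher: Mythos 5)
Your proposal is correct and follows essentially the same route as the paper: the paper's proof is a two-line reduction to Lemma~\ref{for:keylemma} (if $m\notin J$ then $d(m)>0$, so $X(S^{(l_0+2)})$ is not Ramanujan and Theorem~\ref{thm:oddmaxbound} forces $\hat{l}=l_0$), and you have simply written out the bookkeeping --- the identification of the odd integers of $I_2\cup I_3$ with $\{2n+1\mid 7\le n\le 14\}$, the parity of $k^2+5k$, the switch at $k=19$, and the strictness $d(m)>0$ versus $d(m)\ge 0$ --- that the paper leaves implicit. No gaps.
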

\begin{proof}
% We first remark that $k^2+5k+c$ is odd if and only if $c$ is.
 Suppose that $m$ is not in $J$.
 Then, from Lemma~\ref{for:keylemma}, one sees that $d(m)>0$, in other words,
 $|\mu_{1}(S^{(l_0+2)})|>\RB(S^{(l_0+2)})$.
 This shows that $m$ is ordinary.
\end{proof}

 Theorem~\ref{thm:criterionSl0+2} implies that from now on we may concentrate only on $m$ with $m\in J$
 and leads us to imagine that the quadratic polynomials
\[
 f_c(k)=k^2+5k+c, \qquad c\in\{\pm 1,\pm 3,\pm 5\},
\] 
 play important roles in our study.
 We remark that the constant 
\[
 c'=25-4c>0,
\]
 which was in the proof of Lemma~\ref{for:keylemma},
 is nothing but the discriminant of $f_c(k)$ and will often appear in several arguments.

\section{Spectral consideration}

 In the subsequent discussion, 
 we only consider the case where $m\in J$, that is, 
 $m$ can be written as $m=f_c(k)=k^2+5k+c$
 for some $k\in\bZ_{>0}$ % ($k=\Gauss{\sqrt{m}-\frac{3}{2}}$)
 and $c\in \{\pm 1,\pm 3,\pm 5\}$.
% For such $m$, our problem is to decide whether $\hat{l}=l_0$ or $\hat{l}=l_0+2$ holds.
 For such $m$, we clarify when exceptionals occur.
 Hence, from now on, we concentrate on the circulant graphs $X(S)$ with $S\in\cS_{l_0+2}$.
 In this section, we use the notations $\RB=2\sqrt{m-(l_0+2)-1}$ and 
% For $l\in\mathcal{L}$, let  
\[
 \hat{\mu}=\max_{S\in\cS_{l_0+2}}\mu(S),
\]
 for simplicity.
% Namely, $\hat{\mu}_{l}$ is the largest non-trivial largest eigenvalues of all circulant graphs
% with fixed covalency $l$.
 From the definition, % it is immediate that
 $m$ is exceptional if and only if $\hat{\mu}\le \RB$. 
 Therefore, we have to decide $\hat{\mu}$ for a given $m$.

\subsection{A necessary condition for exceptionals}

 The aim of this subsection is to obtain the following necessary condition for exceptionals,
 which we can relatively easily reach the conclusion.

\begin{prop} 
\label{thm:primeinJ}
 Let $m\ge 15$ be an odd integer. 
 If $m$ is exceptional,
 then $m\in J$ which is in either of the following three types;
\begin{enumerate}
 \item[$(\I)$] $m=p$ is an odd prime. 
 \item[$(\II)$] $m=pq$ is a product of two odd primes $p$ and $q$ satisfying $p<q<4p$.
 \item[$(\III)$] $m=25,49$.
\end{enumerate}
% Moreover, every odd prime $m=p\in J$ and $m=25,49$ are indeed exceptional.
\end{prop}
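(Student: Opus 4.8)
The plan is to show, by producing an explicit non-Ramanujan graph whenever $m$ fails one of the three conditions, that an exceptional $m$ must be of type $(\I)$, $(\II)$ or $(\III)$. By Theorem~\ref{thm:criterionSl0+2} we already know $m\in J$, so $m=f_c(k)=k^2+5k+c$ for some $k\ge 4$ (the small cases $m\le 2\cdot 14+1$, which includes $m=25,49$, being absorbed into type $(\III)$ or checked directly). The remaining work is to rule out composite $m$ with a small prime factor: if $m$ has a prime factor $p$ with $p$ ``too small'' relative to $\sqrt m$, one should be able to build $S\in\cS_{l_0+2}$ whose graph is a blow-up of a smaller circulant (or a union/covering structure over $\bZ_{m/p}$) forcing a large eigenvalue.

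The key observation is the last identity in \eqref{def:eigenvaluesa}: $\mu_j(S) = -\sum_{b\in\bZ_m\setminus S} e^{2\pi i b j/m}$, so a covalency-$(l_0+2)$ set is controlled by its $l_0+2$-element complement $T=\bZ_m\setminus S$. First I would choose $T$ to be supported on a coset structure adapted to a divisor $d\mid m$: for instance, if $d\mid m$ and $l_0+2$ is a multiple of $d$ (or close to one), take $T$ to be a union of full fibers of the projection $\bZ_m\to\bZ_d$ together with a symmetric correction, so that for the character $\chi_j$ with $j=m/d$ the sum $\sum_{b\in T}\chi_j(b)$ is as large as possible in absolute value — ideally equal to $|T|=l_0+2$, which already exceeds $\RB=2\sqrt{m-l_0-3}\sim 2\sqrt m$ once $l_0+2\sim 2\sqrt m + 2 > 2\sqrt m$. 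The arithmetic content is then: such a $T$ exists precisely when the smallest prime factor $p$ of $m$ is at most roughly $l_0+2\approx 2\sqrt m$; and when $m=pq$ with $p<q$, the relevant obstruction to packing a full fiber (of size $q$) symmetrically inside a set of size $l_0+2\approx 2\sqrt m\approx 2\sqrt{pq}$ is exactly the inequality $q<4p$, since $q\le l_0+2$ is equivalent, up to the floor, to $q\lesssim 2\sqrt{pq}$, i.e. $q<4p$.

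Concretely, the steps would be: (i) recall from Lemma~\ref{lem:trivial} and \eqref{for:boundtrivialbound} that $l_0+2 > 2(\sqrt m -1) = 2\sqrt m - 2$, so $\RB = 2\sqrt{m-l_0-3} < 2\sqrt m - \text{(something)}$ and in particular $\RB < l_0+2$ is automatic; (ii) suppose $m=ab$ with $1<a\le b$ and $b \le l_0+2$ — then choose $T$ to be a full fiber $\{ x\in\bZ_m : x\equiv r \pmod a\}$ of size $b$ for a suitable $r$ making $T$ symmetric (take $r=0$, giving $T=a\bZ_m\cong\bZ_b$, which is symmetric), and if $b<l_0+2$ enlarge $T$ by adjoining $(l_0+2-b)/2$ symmetric pairs disjoint from it; for $\chi_j$ with $j=b$ one computes $\mu_j(S)=-\sum_{b'\in T}\chi_j(b')$, and the fiber part contributes $-a$ times a root-of-unity sum that does not vanish — here one must argue the correction pairs can be chosen so the total modulus stays $>\RB$, which is possible because the fiber already contributes close to $b\approx l_0+2\approx \RB+\varepsilon$; (iii) deduce that if $m$ is composite and exceptional then $m=pq$ with $p,q$ prime (a third prime factor, or a square factor $p^2$ with $p$ large, is excluded by the size bound applied to the smallest factor), and the surviving constraint forces $q<4p$, i.e. type $(\II)$; the leftover boundary cases $m=25,49$ are type $(\III)$.

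The main obstacle is step (ii): getting a clean non-Ramanujan certificate when $b$ is strictly less than $l_0+2$, because then one must add correction pairs and argue they do not cause destructive interference at the eigenvalue $\mu_b(S)$. I expect the right move is to add the pairs $\{\pm 1,\pm 2,\dots\}$ adjacent to $0$ (so that $T$ looks like a fiber together with a short initial segment, resembling $S^{(\cdot)}$), for which $\sum_{b'\in T}\chi_b(b')$ is a sum of a geometric-type series plus $a$, and then invoke the same kind of $\frac{\sin(\pi j l/m)}{\sin(\pi j/m)}$ estimate as in \eqref{for:eigenSl} and Lemma~\ref{prop:nonRamanujanex} to show the modulus beats $\RB$. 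A secondary subtlety is parity: $l_0+2-b$ must be even, which holds because $l_0+2$ is odd and $b$ — being $q$, an odd prime, in the crucial case — is odd; in the general divisor argument one may need to pick the divisor of $m$ of the right parity, but since $m$ is odd every divisor is odd, so this is harmless. Once the certificate is in hand, the number-theoretic bookkeeping (which $m\in J$ survive) is routine.
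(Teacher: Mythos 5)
Your reduction to $m\in J$ via Theorem~\ref{thm:criterionSl0+2} is the right first step, and the idea of certifying ordinariness by a complement $T=\bZ_m\setminus S$ adapted to a divisor of $m$ is also the right one — but the containment in your step (ii) is inverted, and this breaks the argument in both directions. The paper's proof puts $T$ \emph{inside} the large fiber: writing $m=pt$ with $p$ the smallest prime factor, it takes $T\subset\{0,\pm p,\ldots,\pm\frac{t-1}{2}p\}$, which is possible iff $l_0+2\le t$, equivalently $t\ge 4p-3$; then every term of $\sum_{b\in T}\chi_t(b)$ equals $1$, so $|\mu_t(S)|=l_0+2>\RB$ exactly, with no correction terms and no interference. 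Since $t\ge p^2\ge 4p-3$ whenever $t$ is composite, this certificate covers all composite $m$ except $m=p^2$ (finitely many in $J$, namely $25,49$) and $m=pq$ with $q\le 4p-5$ — precisely the surviving types $(\II)$ and $(\III)$. You instead require the fiber to fit inside $T$, i.e. $m/a\le l_0+2$, which for $m=pq$ reads $q\lesssim 4p$: the \emph{complementary} range to the one you must handle.

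Two concrete failures result. First, in the cases that actually need a certificate your construction gives none: for $q\ge 4p$ the fiber of size $q$ exceeds $l_0+2$ and step (ii) does not apply, and for $m=105=3\cdot 5\cdot 7=f_1(8)\in J$ the only cofactor $\le l_0+2=19$ is $15$, whose correction pairs contribute only $2\cos(2\pi h/7)\le 1.25$ each, giving at most $15+2(1.25)\approx 17.5<2\sqrt{85}\approx 18.4=\RB$. Second, if step (ii) worked as you state it, it would certify that every $m=pq$ with $q<4p$ is ordinary — but $m=35$ is exceptional ($\hat l=11=l_0+2$ in Table~1), so no non-Ramanujan $S\in\cS_{l_0+2}$ exists there; the best your $T$ can achieve is the paper's $\mu^{(1)}=q+(l_0+2-q)\cos\frac{2\pi}{p}$ from Lemma~\ref{lem:candidatepq}, which is generally below $\RB$. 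So the ``destructive interference'' you flag as the main obstacle is not a technical gap to be patched with a $\sin$-ratio estimate; it is the signal that the construction points the wrong way, and your step (iii) would conclude that exceptional $m=pq$ satisfy $q>4p$ rather than $q<4p$.
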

\begin{proof}
 From Theorem~\ref{thm:criterionSl0+2},
 it is enough to consider only the case where $m\in J$.

 Assume that $m$ is a composite. 
 One can easily see that
 there are finitely many $m\in J$ such that $m=p^2$, that is, $m=25,49$.
 It is directly checked that these are all exceptional.
 For the other cases, let $p$ be the minimum prime factor of $m$ and write $m=pt$ with $3\le p<t$.
 If one can take 
 $S\in\cS_{l_0+2}$ as
 $\mathbb{Z}_m\setminus S\subset \{0,\pm p,\pm 2p,\ldots,\pm\frac{t-1}{2}p\}$,
 then $m$ is ordinary because   
\[
 |\mu_{t}(S)|
=\left|\sum_{b\in\mathbb{Z}_m\setminus S}e^{\frac{4\pi ib t}{m}}\right|
=l_0+2\ge \RB
\]
 from the definition of $l_0$. % that $X(S)$ is not Ramanujan and hence $m$ is ordinary.
 Such $S$ can be in fact taken if and only if
 $l_0+2=\#(\mathbb{Z}_m\setminus S)\le \#\{0,\pm p,\pm 2p,\ldots,\pm\frac{t-1}{2}p\}=t$, that is,
 $2\Gauss{\sqrt{pt}-\frac{3}{2}}+3\le t$, equivalently  $t\ge 4p-3$.
 Therefore, if $t$ is either composite or odd prime with $t\ge 4p-3$, then $m$ is ordinary.
% Here, one can respectively check that $m\in J$ of the form of both $m=p(4p-1)$ and $m=p(4p-3)$ where $p$ is odd prime, that is,
% $m=33$ and $m=27,85,451$, are all ordinary.
 This shows the claim.
\end{proof}

\begin{remark}
\label{rem:finitelyordinary}
 As we have stated in the above proof,
 the necessary condition $p<q<4p$ in ($\II$) can be actually reduced to $p<q\le 4p-5$.
 We also remark that 
 there are finitely many $m\in J$ of the form of both $m=p(4p-1)$ and $m=p(4p-3)$;
 $m=33$ and $m=27,85,451$, respectively.
 These are of course ordinary.
\end{remark}

\subsection{Exceptionals of type ($\boldsymbol{\I}$)}

 It is easy to see that $m\in J$ of type ($\I$) is actually exceptional.

\begin{thm} 
\label{thm:exceptionalI}
 Every odd prime $m=p\in J$ are exceptional.
\end{thm}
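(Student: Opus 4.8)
The plan is to show that when $m=p$ is an odd prime, the maximum $\hat\mu=\max_{S\in\cS_{l_0+2}}\mu(S)$ does not exceed $\RB=2\sqrt{m-l_0-3}$, so $p$ is exceptional by definition. The key structural feature is that $m=p$ is prime, so for every non-trivial character index $j$ with $1\le j\le m-1$ the map $a\mapsto aj\bmod m$ is a bijection of $\bZ_m$. Hence for any $S\in\cS_{l_0+2}$ and any $j$, the multiset $\{aj\bmod m\mid a\in\bZ_m\setminus S\}$ is again an $(l_0+2)$-element subset of $\bZ_m$, and
\[
 \m_j(S)=-\sum_{b\in\bZ_m\setminus S}e^{\frac{2\pi ibj}{m}}
\]
is a sum of $l_0+2$ distinct $m$-th roots of unity. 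First I would reduce the problem to a purely extremal-geometric one: among all ways of choosing $l_0+2$ distinct $m$-th roots of unity (with the symmetry constraint that the chosen set is stable under $b\mapsto -b$, since $\bZ_m\setminus S$ inherits symmetry and $0$ may or may not be included), bound the modulus of the sum. Because $p$ is prime the constraint ``$b$ ranges over an arithmetic-progression-type set'' is \emph{not} available to force a large eigenvalue, which is exactly why primes turn out exceptional rather than ordinary.

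Next I would make the extremal bound explicit. The covalency here is $l=l_0+2$, which by Lemma~\ref{lem:trivial} and \eqref{for:boundtrivialbound} satisfies $l_0+2\le 2(\sqrt m-1)+2=2\sqrt m$, and more precisely $l=l_0+2=2\Gauss{\sqrt m-\tfrac32}+3$. The naive triangle-inequality bound gives only $\mu(S)\le l_0+2$, which is too weak by roughly $2$ against $\RB\approx 2(\sqrt m-2)$. The improvement comes from the symmetry of $\bZ_m\setminus S$: writing $\bZ_m\setminus S=\{0\}^{\epsilon}\cup\{\pm b_1,\dots,\pm b_r\}$ (with $\epsilon\in\{0,1\}$ and $l_0+2=\epsilon+2r$), we get $\m_j(S)=-\epsilon-2\sum_{i=1}^r\cos\frac{2\pi b_i j}{m}$, a real number lying in $[-(l_0+2),\,l_0+2]$, and I would argue that it cannot be pushed all the way to the endpoint $l_0+2$ unless all the $\cos\frac{2\pi b_ij}{m}$ are simultaneously very close to $-1$ — which, for $m$ prime, forces the $b_i j\bmod m$ to cluster near $m/2$, an arithmetic condition incompatible with $r$ being as large as $\approx\sqrt m$ when $m$ is prime (there is no subgroup structure to exploit). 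Quantitatively I expect to show $|\m_j(S)|\le l_0+2-2$ rounded appropriately, i.e. $\hat\mu\le l_0$, hence $\hat\mu\le l_0\le 2(\sqrt m-1)\le\RB$ — in fact one only needs $\hat\mu\le\RB=2\sqrt{m-l_0-3}$, and since $l_0+3\le 2\sqrt m+1$ one has $\RB\ge 2\sqrt{m-2\sqrt m-1}$, which still comfortably exceeds $l_0$ for $m\ge 15$.

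A cleaner route, which I would actually pursue, is to invoke the same interpolation-function analysis already set up: the candidate worst case in $\cS_{l_0+2}$ is the ``consecutive'' set $S^{(l_0+2)}$ of \eqref{def:Sl}, whose eigenvalues are given in closed form by \eqref{for:eigenSl}, and Lemma~\ref{for:keylemma} together with the hypothesis that $m=p\in J$ is an odd prime (so $m$ lies in one of the arithmetic progressions $k^2+5k+c$ forcing $d(m)<0$) gives $|\m_1(S^{(l_0+2)})|<\RB$. I would then show that $S^{(l_0+2)}$ genuinely realizes the maximum $\hat\mu$ among all $S\in\cS_{l_0+2}$ when $m$ is prime: for any other symmetric complement, apply the bijection $b\mapsto bj$ to reduce the $j$-th eigenvalue to a sum of $l_0+2$ roots of unity and compare against the sum coming from the consecutive block, using that the consecutive block maximizes $\bigl|\sum e^{2\pi i b/m}\bigr|$ among symmetric $(l_0+2)$-subsets. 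The main obstacle I anticipate is precisely this last comparison step — proving that $S^{(l_0+2)}$ (equivalently, a ``window'' of consecutive roots of unity symmetric about $0$) is the extremizer for the largest-modulus character sum over all symmetric subsets of the given size; once that extremal claim is in hand, exceptionality of prime $m\in J$ follows immediately from Lemma~\ref{for:keylemma}, and I would handle the finitely many small primes $15\le p\le 37$ not covered by the asymptotic part of that lemma by direct verification as was done in the proof of Theorem~\ref{thm:oddmaxbound}.
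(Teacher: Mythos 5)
Your second (``cleaner'') route is exactly the paper's proof: for prime $m$ the bijectivity of $x\mapsto jx$ reduces $\hat{\mu}$ to $|\mu_1(S^{(l_0+2)})|$, and Lemma~\ref{for:keylemma} then gives $|\mu_1(S^{(l_0+2)})|<\RB$ precisely when $m\in J$; the extremal step you flag as the main obstacle is the same one the paper dispatches with ``one can easily see,'' and it is indeed routine, since for a symmetric complement the eigenvalue is the real number $1+2\sum_i\cos\frac{2\pi b_i}{m}$, whose positive extreme is attained termwise by the consecutive block and dominates the negative extreme in absolute value. Be aware, though, that the quantitative target of your first route, $\hat{\mu}\le l_0$, is false: one has $\hat{\mu}=\frac{\sin(\pi(l_0+2)/m)}{\sin(\pi/m)}>(l_0+2)-\frac{4\pi^2}{3\sqrt{m}}>l_0$ for $m$ of moderate size, so that branch cannot be completed and it is essential that you abandoned it for the second one. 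Also, the small cases $15\le m\le 29$ are already covered by part (1) of Lemma~\ref{for:keylemma}, so no separate verification is needed there.
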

\begin{proof}
 Let $m=p\in J$ be a prime.
 Then, % for all $l\in\cL$,
 one can easily see that $\hat{\mu}=|\mu_{1}(S^{(l_0+2)})|$
 because the map $\bZ_m\to \bZ_m$ defined by $x\mapsto jx$ is bijective for all $1\le j\le m-1$.
% In particular, $\hat{\mu}_{l_0+2}=|\mu_{1}(S^{(l_0+2)})|$ and
 Hence, from Lemma~\ref{for:keylemma}, $m$ is exceptional if and only if $m\in J$.
% if and only if $f_0(m)\le 0$.
% Therefore, the claim follows from Lemma~\ref{for:keylemma}. % or Theorem~\ref{thm:criterionSl0+2}. 
\end{proof}

\subsection{Exceptionals of type ($\boldsymbol{\II}$)}

 In this subsection,
 we assume that $m=f_c(k)\in J$ is of type ($\II$).
 Namely, there exists odd distinct primes $p$ and $q$ with $p<q<4p$ such that $m=pq$.
 From Proposition~\ref{thm:primeinJ},
 our task is clear up whether or not such $m$ is in fact exceptional.
 We at first show that one can narrow down the candidates of $\hat{\mu}$ as follows.

\begin{lem}
\label{lem:candidatepq}
 We have $\hat{\mu}=\max\{\mu^{(0)},\mu^{(1)},\mu^{(2)}\}$ where
\begin{align}
\label{for:pq0}
 \mu^{(0)}
&=\frac{\sin{\frac{\pi(l_0+2)}{pq}}}{\sin{\frac{\pi}{pq}}},\\
\label{for:pq1}
 \mu^{(1)}
&=q+(l_0+2-q)\cos{\frac{2\pi}{p}},\\
\label{for:pq2}
  \mu^{(2)}
&=
 \begin{cases}
 p+(l_0+2-p)\cos{\frac{2\pi}{q}} & (p<q<\frac{(3p+2)^2}{4p}), \\
 p+2p\cos{\frac{2\pi}{q}}+(l_0+2-3p)\cos{\frac{4\pi}{q}} & (\frac{(3p+2)^2}{4p}\le q<4p).
\end{cases}
\end{align}
\end{lem}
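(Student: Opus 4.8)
The plan is to exploit the factorization $m=pq$ to break the index set $\{1,\dots,m-1\}$ for $\mu_j(S)$ into three classes according to $\gcd(j,m)$, and then to argue that in each class the maximum of $|\mu_j(S)|$ over $S\in\cS_{l_0+2}$ is attained by a specific, highly symmetric covalency set, producing the three candidates $\mu^{(0)},\mu^{(1)},\mu^{(2)}$. When $\gcd(j,m)=1$, multiplication by $j$ permutes $\bZ_m$, so $|\mu_j(S)|$ ranges over the same values as $|\mu_1(S')|$ as $S$ ranges over $\cS_{l_0+2}$; hence the supremum over this class is $\mu(S^{(l_0+2)})$ with $j=1$, which by \eqref{for:eigenSl} is exactly $\mu^{(0)}$. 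When $\gcd(j,m)=q$ (so $j=q,2q,\dots$, of which there are $p-1$ values), the character $e^{2\pi i b j/m}$ depends on $b$ only modulo $p$, so $\mu_j(S)$ is a sum of $p$-th roots of unity with multiplicities: writing $n_r$ for the number of elements of $\bZ_m\setminus S$ lying in the residue class $r\bmod p$, we get $\mu_j(S)=-\sum_{r=0}^{p-1} n_r\,\zeta_p^{r j/q}$ with $\sum_r n_r=l_0+2$ and $0\le n_r\le q$. Symmetry of $S$ forces $n_r=n_{-r}$, so this is a real cosine sum; the extreme value of such a sum, under the box constraints $0\le n_r\le q$, is obtained by the ``greedy'' allocation that fills the class $r\equiv 0$ first (contributing $+1$ each) up to its cap $q$ and then puts the remaining $l_0+2-q$ mass into $r\equiv\pm1$, giving $\mu^{(1)}=q+(l_0+2-q)\cos\frac{2\pi}{p}$. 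The symmetric analysis for $\gcd(j,m)=p$ ($q-1$ values of $j$), with caps $0\le n_r\le p$ on the $q$ residue classes mod $q$, yields $\mu^{(2)}$: if $l_0+2-p\le 2p$ the overflow after filling $r\equiv0$ fits into $r\equiv\pm1$, giving the first branch; otherwise one must also use $r\equiv\pm2$, giving the second branch, and the threshold $l_0+2-p\le 2p$ translates, via $l_0+2=2\lfloor\sqrt{pq}-\tfrac32\rfloor+3$ and $q<4p$, into the stated condition $q<\frac{(3p+2)^2}{4p}$.

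The order of the steps I would carry out is: (i) decompose $\{1,\dots,m-1\}$ by $\gcd$ with $m$ and reduce $\mu_j(S)$ in each class to a signed cosine sum of $p$-th (resp. $q$-th) roots of unity with prescribed total multiplicity $l_0+2$ and per-class cap $q$ (resp. $p$); (ii) prove an elementary extremal lemma: among nonnegative integer vectors $(n_0,n_1,\dots)$ symmetric under $r\mapsto -r$, with fixed sum and uniform upper bound, the sum $\sum_r n_r\cos\frac{2\pi r}{N}$ is maximized by the greedy allocation into the classes $0,\pm1,\pm2,\dots$ in order, and its negative (which also matters since we want $|\mu_j|$) is controlled separately and shown not to beat the positive extreme for the ranges in play; (iii) specialize the greedy allocation to $N=p$ and $N=q$ to read off $\mu^{(1)}$ and $\mu^{(2)}$, using the arithmetic of $l_0$ to locate the branch cutoff; (iv) combine with the $\gcd=1$ case and note $\mu^{(0)}\ge$ the values coming from any nontrivial negative cosine sums, so that $\hat\mu=\max\{\mu^{(0)},\mu^{(1)},\mu^{(2)}\}$.

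The main obstacle I anticipate is step (ii), the extremal lemma, together with the bookkeeping in step (iii). The greedy-allocation claim is intuitively clear because $\cos\frac{2\pi r}{N}$ is largest at $r=0$ and decreases (over the relevant initial range of $r$) as $|r|$ grows, so shifting multiplicity toward $r=0$ can only increase the sum; but making this rigorous requires care that the cosine values are genuinely in decreasing order over the indices actually used — for $N=p$ only $r=0,\pm1$ occur because $l_0+2-q<2p$ is forced by $q<4p$ and the size of $l_0$, and for $N=q$ at most $r=0,\pm1,\pm2$ occur, again by an $l_0$ estimate — and that no allocation producing a large \emph{negative} sum (which would also have large absolute value) can exceed $\mu^{(0)}$. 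That last point is where I expect to lean hardest on the explicit asymptotics $l_0\sim 2\sqrt m$ from \eqref{for:boundtrivialbound} and on the hypothesis $p<q<4p$, which keeps $p$ and $q$ comparable and hence keeps $\cos\frac{2\pi}{p},\cos\frac{2\pi}{q}$ close to $1$, so the positive greedy sums dominate; I would handle any small-$m$ exceptions to these inequalities by direct verification, consistent with the paper's style elsewhere.
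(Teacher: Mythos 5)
Your proposal follows essentially the same route as the paper's proof: decompose the eigenvalue indices $j$ by $\gcd(j,pq)\in\{1,q,p\}$, identify the $\gcd=1$ case with $|\mu_1(S^{(l_0+2)})|$ via the multiplication-by-$j$ bijection, and for the other two classes reduce $|\mu_j(S)|$ to a capped multiplicity sum over residue classes mod $p$ (resp.\ mod $q$), maximized by the greedy allocation into the classes $0,\pm1,\pm2$ --- exactly the paper's $T_0\cup T_1\cup T_2$ containment argument, with the same branch cutoff $l_0+2\le 3p$ giving $q<\frac{(3p+2)^2}{4p}$. The extremal and negative-sum issues you flag in step (ii) are real but are glossed over in the paper's own proof as well, so your write-up is, if anything, more explicit about what needs checking.
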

\begin{proof}
 From the definition, we have
\begin{align*}
 \hat{\mu}
%=\max_{S\in\cS_{l_0+2}}\mu(S)
=\max_{S\in\cS_{l_0+2}}\Bigl\{\max_{1\le j\le pq-1}|\mu_j(S)|\Bigr\}
=\max\bigl\{\mu^{(0)},\mu^{(1)},\mu^{(2)}\bigr\},
\end{align*}
 where  
\begin{align*}
 \mu^{(0)}&=\max_{S\in\cS_{l_0+2}}\Bigl\{\max_{1\le j\le pq-1 \atop (j,pq)=1}|\mu_j(S)|\Bigr\},\\
 \mu^{(1)}&=\max_{S\in\cS_{l_0+2}}\Bigl\{\max_{1\le j\le pq-1 \atop (j,pq)=q}|\mu_j(S)|\Bigr\},\\
 \mu^{(2)}&=\max_{S\in\cS_{l_0+2}}\Bigl\{\max_{1\le j\le pq-1 \atop (j,pq)=p}|\mu_j(S)|\Bigr\}.
\end{align*}
 Hence, it is enough to show that $\mu^{(0)},\mu^{(1)},\mu^{(2)}$ are equal to 
 \eqref{for:pq0}, \eqref{for:pq1}, \eqref{for:pq2}, respectively.
 The expression \eqref{for:pq0}, that is, $\mu^{(0)}=|\mu_{1}(S^{(l_0+2)})|$,
 can be seen similarly as in the proof of Theorem~\ref{thm:primeinJ}.
 Therefore, it suffices to consider the other two cases. 
 
 For $\mu^{(1)}$, we have
\begin{align}
\label{for:mu1exp}
 \mu^{(1)}
&=\max_{S\in\cS_{l_0+2}}
\left\{\max_{1\le s\le p-1}\Biggl|\sum^{p-1}_{t=0}\#\bigl\{b\in\bZ_{pq}\,\bigl|\,b\notin S,\ b\equiv t \!\!\!\pmod{p}\bigr\}e^{\frac{2\pi i ts}{p}}\Biggr|\right\}.
\end{align}
 Now, we introduce the notation 
\[
 T_{h}(a,b)
=
\begin{cases}
 \bigl\{0,\pm a,\pm 2a,\ldots,\pm\frac{b-1}{2}a\bigr\} & (h=0),\\
 \bigl\{\pm h,\pm a\pm h,\pm 2a\pm h,\ldots,\pm \frac{b-1}{2}a\pm h\bigr\} & (h\ge 1)
\end{cases}
\]
 for odd $a,b\in\mathbb{Z}_{>0}$.
 Note that $\# T_{h}(a,b)=b$ if $h=0$ and $2b$ otherwise.
 We may assume that $p<q\le 4p-5$ (see Remark~\ref{rem:finitelyordinary}).
 This condition implies that we can not take $S\in\cS_{l_0+2}$ as $\bZ_{pq}\setminus S\subset T_{0}(p,q)$
 but can as $\bZ_{pq}\setminus S\subset T_{0}(p,q)\cup T_{1}(p,q)$.
 This together with \eqref{for:mu1exp} shows the expression \eqref{for:pq1}.
% (the maximum is attained for such $S$ with $s=1$).

% On the other hand, when $p\,|\,j$, one has similarly
 Similarly, we have 
\begin{align}
\label{for:mu2exp}
\mu^{(2)}
&=\max_{S\in\cS_{l_0+2}}
\left\{\max_{1\le s\le q-1}\Biggl|\sum^{q-1}_{t=0}\#\bigl\{b\in\bZ_{pq}\,\bigl|\,b\notin S,\ b\equiv t \!\!\!\pmod{q}\bigr\}e^{\frac{2\pi i ts}{q}}\Biggr|\right\}.
\end{align}
 The condition $p<q$ implies that we can not take $S\in\cS_{l_0+2}$ as $\bZ_{pq}\setminus S\subset T_{0}(q,p)$.
 However, if $l_0+2\le 3p$, that is, $2\Gauss{\sqrt{pq}-\frac{3}{2}}+3\le 3p$, equivalently
 $(p,q)=(3,5),(3,7)$ or $p<q<\frac{(3p+2)^2}{4p}$ if $p\ge 5$,
% (the equality holds if and only if
% $q=\frac{9p+7}{4},\frac{9p+11}{4}$ if $p\equiv 1$ $(\text{mod} \ 4)$
% or $q=\frac{9p+1}{4},\frac{9p+5}{4}$ if $p\equiv 3$ $(\text{mod} \ 4)$),
 then we can take $S\in\mathcal{S}_{l_0+2}$ as $\bZ_{pq}\setminus S\subset T_{0}(q,p)\cup T_{1}(q,p)$ and hence,
 together with \eqref{for:mu2exp}, $\mu^{(2)}=p+(l_0+2-p)\cos{\frac{2\pi}{q}}$.
 If $\frac{(3p+2)^2}{4p}\le q<4p$, then we can not take $S\in\cS_{l_0+2}$ as $\bZ_{pq}\setminus S\subset T_{0}(q,p)\cup T_{1}(q,p)$
 but can as $\bZ_{pq}\setminus S\subset T_{0}(q,p)\cup T_{1}(q,p)\cup T_{2}(q,p)$,
 whence $\mu^{(2)}=p+2p\cos{\frac{2\pi}{q}}+(l_0+2-3p)\cos{\frac{4\pi}{q}}$.
 These show the expression \eqref{for:pq2}.  
\end{proof}

 We next analytically evaluate the difference between $\mu^{(i)}$ and $\RB$ on $J\cap I_k$ for each $i\in\{0,1,2\}$.
 Before that, we notice that when $m=f_c(k)=pq\in J\cap I_k$ is of type ($\II$), we have $l_0+2=2k+3$.
 Moreover, if we put $x=\sqrt{\frac{q}{p}}$, then $1<x<2$ and $p=\frac{\sqrt{f_c(k)}}{x}$ and $q=\sqrt{f_c(k)}x$.
 Based on these facts, we study the following functions
\[
 D^{(i)}_c(k;x)=M^{(i)}_c(k;x)-A, \qquad k\in\mathbb{Z}_{>0}, \ 1<x<2,
\] 
 where $M^{(i)}_c(k;x)$ is defined by 
\begin{align*}
 M^{(0)}_c(k;x)
&=\frac{\sin{\frac{\pi C}{B^2}}}{\sin{\frac{\pi}{B^2}}},\\
 M^{(1)}_c(k;x)
&=Bx+\Bigl(C-Bx\Bigr)\cos{\frac{2\pi x}{B}},\\
 M^{(2)}_c(k;x)
&=
\begin{cases}
 \displaystyle{\frac{B}{x}+\Bigl(C-\frac{B}{x}\Bigr)\cos{\frac{2\pi}{Bx}}} & (1<x<\frac{3}{2}),\\[7pt]
 \displaystyle{\frac{B}{x}+\frac{2B}{x}\cos{\frac{2\pi}{Bx}}+\Bigl(C-\frac{3B}{x}\Bigr)\cos{\frac{4\pi}{Bx}}} & (\frac{3}{2}<x<2),
\end{cases}
\end{align*}
 with 
\begin{align*}
 A&=2\sqrt{f_c(k)-(l_0+2)-1}=2\sqrt{k^2+3k+c-4},\\
 B&=\sqrt{f_c(k)}=\sqrt{k^2+5k+c},\\
 C&=l_0+2=2k+3.
\end{align*}

\begin{lem}
\label{lem:Mi}
 For a fixed $x$, we have
\begin{align}
\label{for:M0}
 M^{(0)}_c(k;x)
&=2k+3-\frac{4\pi^2}{3}k^{-1}+O(k^{-2}),\\
\label{for:M1}
 M^{(1)}_c(k;x)
&=2k+3-2\pi^2x^2(2-x)k^{-1}+O(k^{-2}),\\
\label{for:M2}
 M^{(2)}_c(k;x)
&=
\begin{cases}
 \displaystyle{2k+3-\frac{2\pi^2x^2(2x-1)}{x^3}k^{-1}+O(k^{-2})} & (1<x<\frac{3}{2}),\\[7pt]
 \displaystyle{2k+3-\frac{4\pi^2x^2(4x-5)}{x^3}k^{-1}+O(k^{-2})} & (\frac{3}{2}<x<2),
\end{cases}
\end{align}
 and
\begin{equation}
\label{for:A}
A=2k+3-\frac{c'}{4}k^{-1}+O(k^{-2})
\end{equation}
 as $k\to\infty$.
\end{lem}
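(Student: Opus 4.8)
The plan is to derive each of the five expansions as a routine Taylor expansion in the single large parameter $k$; no deeper input is needed beyond the binomial series for the radicals and the Maclaurin series of $\sin$ and $\cos$.

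First I would record the sizes of the building blocks. Writing $A=2k\sqrt{1+\tfrac3k+\tfrac{c-4}{k^2}}$ and applying $\sqrt{1+u}=1+\tfrac u2-\tfrac{u^2}8+O(u^3)$ with $u=\tfrac3k+O(k^{-2})$, I collect the terms through order $k^{-1}$ to obtain \eqref{for:A}. The same computation applied to $B=\sqrt{k^2+5k+c}$ gives $B=k+\tfrac52+O(k^{-1})$, hence $B=k+O(1)$, $B^{2}=k^{2}+O(k)$, $B^{-1}=k^{-1}+O(k^{-2})$ and $B^{-2}=k^{-2}+O(k^{-3})$ — all the precision the remaining three cases require. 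Throughout, $C=l_0+2=2k+3$ exactly.

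For $M^{(0)}_c$ I would exploit that $\tfrac{\pi}{B^{2}}=O(k^{-2})$ while $\tfrac{\pi C}{B^{2}}=O(k^{-1})$: using $\sin t=t-\tfrac{t^{3}}6+O(t^{5})$ in the numerator but only $\sin t=t\bigl(1+O(t^{2})\bigr)$ in the denominator gives
\[
 M^{(0)}_c(k;x)=C-\frac{\pi^{2}C^{3}}{6B^{4}}+O(k^{-2}),
\]
and with $C^{3}=8k^{3}+O(k^{2})$, $B^{4}=k^{4}+O(k^{3})$ this is \eqref{for:M0}. For $M^{(1)}_c$ and $M^{(2)}_c$ I would first rewrite each defining formula in the telescoped form in which the constant part cancels against $C$, namely
\[
 M^{(1)}_c(k;x)=C+(C-Bx)\Bigl(\cos\tfrac{2\pi x}{B}-1\Bigr),
\]
\[
 M^{(2)}_c(k;x)=C+\Bigl(C-\tfrac Bx\Bigr)\Bigl(\cos\tfrac{2\pi}{Bx}-1\Bigr)\quad\Bigl(1<x<\tfrac32\Bigr),
\]
\[
 M^{(2)}_c(k;x)=C+\tfrac{2B}{x}\Bigl(\cos\tfrac{2\pi}{Bx}-1\Bigr)+\Bigl(C-\tfrac{3B}{x}\Bigr)\Bigl(\cos\tfrac{4\pi}{Bx}-1\Bigr)\quad\Bigl(\tfrac32<x<2\Bigr).
\]
Since every cosine argument is $O(k^{-1})$, substituting $\cos t-1=-\tfrac{t^{2}}2+O(t^{4})$ together with $C-Bx=(2-x)k+O(1)$, $C-\tfrac Bx=(2-\tfrac1x)k+O(1)$ and $C-\tfrac{3B}x=(2-\tfrac3x)k+O(1)$, and using $B^{-1}=k^{-1}+O(k^{-2})$, leaves in each case a single contribution of order $k^{-1}$ (two, to be combined, in the last subcase), which yields \eqref{for:M1} and \eqref{for:M2}.

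There is no genuine obstacle here; the only place a careless estimate could slip is the error bookkeeping in $M^{(0)}_c$ — one must check that replacing $\sin(\pi/B^{2})$ by $\pi/B^{2}$ in the denominator costs only a relative error $O(B^{-4})=O(k^{-8})$, far below $k^{-1}$ — and, likewise, one should note that the $O(1)$ tails of the linear coefficients and the $O(t^{4})$ tails of the cosines feed only into the $O(k^{-2})$ remainder, hence are harmless.
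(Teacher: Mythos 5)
Your proposal is correct and takes essentially the same approach as the paper: the paper's proof consists of the single sentence ``These are direct,'' and your Taylor expansions (binomial series for $A$ and $B$, Maclaurin series for $\sin$ and $\cos$, with the telescoped rewriting so that the constant term is exactly $C=2k+3$) supply precisely the intended direct computation, with sound error bookkeeping. One remark worth recording: carried out as you describe, the two cases of \eqref{for:M2} come out with coefficients $\frac{2\pi^2(2x-1)}{x^3}$ and $\frac{4\pi^2(4x-5)}{x^3}$ rather than the printed $\frac{2\pi^2x^2(2x-1)}{x^3}$ and $\frac{4\pi^2x^2(4x-5)}{x^3}$ --- the factor $x^2$ in the numerators of the statement is a typo (only your version is consistent with the expansion of $D^{(2)}_c(k;x)$ used in Lemma~\ref{lem:m02}), so your computation actually corrects \eqref{for:M2} rather than literally ``yielding'' it as you claim at the end.
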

\begin{proof}
 These are direct.
\end{proof}

 We first show that 
 one does not have to take account of both $D^{(0)}_c$ and $D^{(2)}_c$
 in our discussion.

\begin{lem}
\label{lem:m02}
 We have $D^{(0)}_c(k;x)<0$ and $D^{(2)}_c(k;x)<0$ on $1<x<2$
 for any $c\in\{\pm 1,\pm 3,\pm 5\}$ and $k\ge k_0$ with a sufficiently large $k_0\in\mathbb{N}$.
\end{lem}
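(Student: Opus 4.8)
The plan is to extract the sign of $D^{(0)}_c(k;x)$ and $D^{(2)}_c(k;x)$ directly from the asymptotic expansions of Lemma~\ref{lem:Mi}. Writing $D^{(i)}_c(k;x)=M^{(i)}_c(k;x)-A$ and combining \eqref{for:M0} and \eqref{for:M2} with \eqref{for:A}, one gets
\[
 D^{(0)}_c(k;x)=\Bigl(\tfrac{c'}{4}-\tfrac{4\pi^2}{3}\Bigr)k^{-1}+O(k^{-2}),
\]
while the leading $k^{-1}$-coefficient of $D^{(2)}_c(k;x)$ equals $\tfrac{c'}{4}-\tfrac{2\pi^2(2x-1)}{x}$ for $1<x<\tfrac32$ and $\tfrac{c'}{4}-\tfrac{4\pi^2(4x-5)}{x}$ for $\tfrac32<x<2$ (after simplifying $\tfrac{x^2(2x-1)}{x^3}=\tfrac{2x-1}{x}$, resp.\ $\tfrac{x^2(4x-5)}{x^3}=\tfrac{4x-5}{x}$). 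The whole lemma then reduces to checking that in each case this leading coefficient is a \emph{negative constant}, with the $O(k^{-2})$ remainder uniform in both $x$ and $c$; the result then follows by taking $k_0$ large enough that this uniform negative $k^{-1}$-term dominates the uniform remainder.

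Concretely, I would first record the uniformity: $M^{(0)}_c$ and $A$ do not involve $x$ at all, and $M^{(2)}_c$ is built from cosines of arguments of order $O(x/B)=O(1/k)$ with $x$ confined to the bounded interval $(1,2)$, so the Taylor remainders (bounded by e.g.\ $|\cos t-1+\tfrac{t^2}{2}|\le\tfrac{t^4}{24}$) are uniform on $(1,2)$; and $c$ runs over the finite set $\{\pm1,\pm3,\pm5\}$. Next come the constant comparisons. Since $c'=25-4c\le 45$, we have $\tfrac{c'}{4}\le\tfrac{45}{4}=11.25$, whereas $\tfrac{4\pi^2}{3}=13.15\ldots$, so the $k^{-1}$-coefficient of $D^{(0)}_c$ is at most $\tfrac{45}{4}-\tfrac{4\pi^2}{3}<0$. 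For $D^{(2)}_c$, note that $\tfrac{2x-1}{x}=2-\tfrac1x$ and $\tfrac{4x-5}{x}=4-\tfrac5x$ are increasing, with $\tfrac{2x-1}{x}>1$ on $x>1$ and $\tfrac{4x-5}{x}>\tfrac23$ on $x>\tfrac32$; hence $\tfrac{2\pi^2(2x-1)}{x}>2\pi^2$ and $\tfrac{4\pi^2(4x-5)}{x}>\tfrac{8\pi^2}{3}$, both comfortably exceeding $\tfrac{45}{4}\ge\tfrac{c'}{4}$. So in every regime the $k^{-1}$-coefficient of $D^{(2)}_c$ is $\le \tfrac{45}{4}-2\pi^2<0$, a fixed negative number. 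Consequently $D^{(0)}_c(k;x)<0$ and $D^{(2)}_c(k;x)<0$ for all $x$ and all admissible $c$ once $k\ge k_0$. (The boundary value $x=\tfrac32$ lies outside the domain on which $M^{(2)}_c$ was defined, and anyway corresponds to $q=\tfrac{(3p+2)^2}{4p}$, not an integer, so it never occurs.)

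The only point demanding any care is the uniformity of the $O(k^{-2})$ tail in $x$, and this is painless precisely because $x$ lives in a fixed bounded interval while the transcendental part of each $M^{(i)}_c$ is a cosine of an argument tending to $0$ like $1/k$; everything else is the comparison of two explicit numerical constants, with a safe margin in every case.
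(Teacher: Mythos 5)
Your overall route is exactly the paper's: write $D^{(i)}_c=M^{(i)}_c-A$, read off the coefficient of $k^{-1}$ from Lemma~\ref{lem:Mi}, and check that it is negative uniformly in $x\in(1,2)$ and $c\in\{\pm1,\pm3,\pm5\}$ (the paper in fact identifies $D^{(0)}_c$ with the quantity $D(k)$ already analysed in the proof of Lemma~\ref{for:keylemma}, but that is the same computation). Your explicit remark on uniformity of the $O(k^{-2})$ tail is a point the paper leaves implicit, and your treatment of $D^{(0)}_c$ is fine: $\tfrac{c'}{4}\le\tfrac{45}{4}<\tfrac{4\pi^2}{3}$.

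There is, however, a genuine problem in your verification for $D^{(2)}_c$. You simplified the printed formula \eqref{for:M2} via $\tfrac{x^2(2x-1)}{x^3}=\tfrac{2x-1}{x}$, but the factor $x^2$ in \eqref{for:M2} is a typo: a direct Taylor expansion of $M^{(2)}_c$ (expand $\cos\tfrac{2\pi}{Bx}=1-\tfrac{2\pi^2}{B^2x^2}+\cdots$ and use $\tfrac{C}{B^2}=\tfrac{2}{k}+O(k^{-2})$, $\tfrac{1}{B}=\tfrac{1}{k}+O(k^{-2})$) gives the $k^{-1}$-coefficient $-\tfrac{2\pi^2(2x-1)}{x^3}$ on $1<x<\tfrac32$ and $-\tfrac{4\pi^2(4x-5)}{x^3}$ on $\tfrac32<x<2$, which is what the paper's own proof of this lemma actually uses ($D^{(2)}_c=\tfrac{c'x^3-8\pi^2(2x-1)}{4x^3}k^{-1}+O(k^{-2})$, etc.). With the correct coefficient your lower bounds $2\pi^2$ and $\tfrac{8\pi^2}{3}$ are no longer valid, and the comparison with $\tfrac{c'}{4}\le\tfrac{45}{4}=11.25$ is not ``comfortable'': as $x\to\tfrac32$ both branches tend to $\tfrac{32\pi^2}{27}=11.69\ldots$, so the margin is about $0.45$. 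The conclusion still holds — $\tfrac{2x-1}{x^3}$ is decreasing on $(1,\tfrac32)$ with infimum $\tfrac{16}{27}$, and $\tfrac{4x-5}{x^3}$ on $(\tfrac32,2)$ attains its infimum $\tfrac{8}{27}$ at $x=\tfrac32$, so in both cases the coefficient is bounded above by $\tfrac{45}{4}-\tfrac{32\pi^2}{27}<0$ since $\pi^2>\tfrac{1215}{128}$ — but this monotonicity check must replace your constant lower bounds for the argument to be correct.
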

\begin{proof}
 Notice that $D^{(0)}_c(k;x)=D(k)$ where $D(k)=D(k,c)$ is defined in the proof of Lemma~\ref{for:keylemma}.
 Therefore, as we have seen in the lemma, 
 $D^{(0)}_c(k;x)=D(k)<0$ because $c\in\{\pm 1,\pm 3,\pm 5\}$ if $k$ is sufficiently large.

 Similarly, from \eqref{for:M2} and \eqref{for:A}, we have 
\[
 D^{(2)}_c(k;x)=
\begin{cases}
 \displaystyle{\frac{c'x^3-8\pi^2(2x-1)}{4x^3}}k^{-1}+O(k^{-2}) & (1<x<\frac{3}{2}),\\[7pt]
 \displaystyle{\frac{c'x^3-16\pi^2(4x-5)}{4x^3}}k^{-1}+O(k^{-2}) & (\frac{3}{2}<x<2).
\end{cases}
\]
 Hence the result follows from the fact that the coefficient of $k^{-1}$ is negative for any $1<x<2$.
\end{proof}

 For the function $D^{(1)}_c$, we have the asymptotic expansion
\begin{equation}
\label{for:asymptoticF1}
 D^{(1)}_c(k;x)=\frac{c'-8\pi^2x(2-x)}{4}k^{-1}+O(k^{-2})
\end{equation}
 from \eqref{for:M1} and \eqref{for:A}, and thus 
 $D^{(1)}_c$ becomes negative only if $x$ is close to $2$,
 in other words, $q$ is close to $4p$.
% (see figures in Remark~\ref{rem:spectralordering} again).
 More precisely, we obtain the following 

\begin{lem}
\label{lem:m1} 
 There exists constants $x_1$ and $x_2$ with $1<x_1<x_2<2$
% which are not depend on neither $k$ nor $c\in\{\pm 1,\pm 3,\pm 5\}$,
 such that % for any $k\ge k_0$ and $c\in\{\pm 1,\pm 3,\pm 5\}$
\begin{itemize}
 \item[$\mathrm{(1)}$] $D^{(1)}_c(k;x)<0$ on $1<x<x_1$, 
 \item[$\mathrm{(2)}$] $D^{(1)}_c(k;x)>0$ on $x_2<x<2$, 
\end{itemize}
 for any $c\in\{\pm 1,\pm 3,\pm 5\}$ and $k\ge k_0$ with a sufficiently large $k_0\in\mathbb{N}$.
\end{lem}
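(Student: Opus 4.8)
The plan is to decide the sign of $D^{(1)}_c(k;x)$ for large $k$ directly from the leading coefficient of its asymptotic expansion \eqref{for:asymptoticF1}. Writing
\[
 D^{(1)}_c(k;x)=\frac{\gamma_c(x)}{4}\,k^{-1}+O(k^{-2}),\qquad\gamma_c(x)=c'-8\pi^2x(2-x),
\]
the first thing to check is that the implied constant in the remainder is uniform for $x\in[1,2]$: the expansion, coming from \eqref{for:M1} and \eqref{for:A}, only collects Taylor remainders of $\cos\frac{2\pi x}{B}$ and of the square root $\sqrt{k^2+3k+c-4}$, whose arguments stay in a fixed compact set while $x$ varies over $[1,2]$. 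Granting this, everything reduces to the elementary behaviour of $\gamma_c$ on $(1,2)$.

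That behaviour is immediate. Since $c\in\{\pm1,\pm3,\pm5\}$ we have $c'=25-4c\in\{5,13,21,29,37,45\}$, so $0<c'\le45<8\pi^2$, while $x\mapsto x(2-x)$ decreases strictly from $1$ at $x=1$ to $0$ at $x=2$. Hence $\gamma_c(x)\to c'-8\pi^2<0$ as $x\to1^+$ and $\gamma_c(x)\to c'>0$ as $x\to2^-$, so $\gamma_c$ changes sign exactly once on $(1,2)$, at $\xi_c=1+\sqrt{1-c'/(8\pi^2)}$. As $c$ runs over the six admissible values $\xi_c$ lies in a fixed closed subinterval $[\xi_{-5},\xi_{5}]\subset(1,2)$, with $\xi_{-5}=1+\sqrt{1-45/(8\pi^2)}$ and $\xi_{5}=1+\sqrt{1-5/(8\pi^2)}$; a numerical check gives $\tfrac32<\xi_{-5}$ and $\xi_{5}<1.99$. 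Thus one may take, for instance, $x_1=\tfrac32$ and $x_2=1.99$ (any $x_1<\min_c\xi_c$ and $x_2>\max_c\xi_c$ would do, and these are not the best possible).

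It remains to combine the two facts. On $[1,x_1]$ we have $x(2-x)\ge x_1(2-x_1)=\tfrac34$, so $\gamma_c(x)\le c'-6\pi^2\le 45-6\pi^2<0$; as there are only finitely many $c$, this gives $\gamma_c(x)\le-\delta$ uniformly on $[1,x_1]$ for $\delta:=6\pi^2-45>0$. Likewise on $[x_2,2]$ we have $x(2-x)\le x_2(2-x_2)<5/(8\pi^2)$, hence $\gamma_c(x)\ge c'-8\pi^2x_2(2-x_2)\ge\delta'$ uniformly, where $\delta':=5-8\pi^2x_2(2-x_2)>0$. Feeding these bounds into the expansion together with a uniform remainder bound $|D^{(1)}_c(k;x)-\tfrac{\gamma_c(x)}{4}k^{-1}|\le Mk^{-2}$, we get $D^{(1)}_c(k;x)\le-\tfrac{\delta}{4}k^{-1}+Mk^{-2}<0$ for $x\in(1,x_1)$ once $k>4M/\delta$, and $D^{(1)}_c(k;x)\ge\tfrac{\delta'}{4}k^{-1}-Mk^{-2}>0$ for $x\in(x_2,2)$ once $k>4M/\delta'$; taking $k_0$ to be the larger of the two thresholds establishes both assertions simultaneously for all admissible $c$.

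I do not expect a genuine obstacle here: the only point that needs a word of care is the uniformity in $x$ of the $O(k^{-2})$ remainder, which holds simply because $x$ ranges over a compact interval. What the lemma deliberately leaves untouched is the transitional band $x_1\le x\le x_2$ — the regime in which $q$ is moderately close to $4p$ — where $\gamma_c$ changes sign and the asymptotic alone no longer decides the sign of $D^{(1)}_c$; that range calls for the finer analysis carried out in the rest of the paper.
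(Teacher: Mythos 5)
Your route is genuinely different from the paper's. The paper does not expand $D^{(1)}_c$ directly: it factors it exactly as $D^{(1)}_c(k;x)=B\bigl(1-\cos\frac{2\pi x}{B}\bigr)\bigl(x-X(x;k,c)\bigr)$ with $X(x;k,c)=\frac{C}{B}-\frac{C-A}{B(1-\cos\frac{2\pi x}{B})}$, observes that the sign of $D^{(1)}_c$ is the sign of $x-X(x;k,c)$, and uses the monotonicity of $\cos\frac{2\pi x}{B}$ in $x$ to reduce everything to the two endpoint values $X(1;k,c)$ and $X(2;k,c)$, which are then expanded in $k$ only. That device sidesteps entirely the question of uniformity in $x$ of an asymptotic remainder, which is the one point your argument has to address head-on; your justification of that uniformity (compactness of $[1,2]$, boundedness of the Taylor remainders of $\cos\frac{2\pi x}{B}$ and of the square root) is brief but correct, so in principle the direct approach works and is arguably more transparent.

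There is, however, a concrete error in the coefficient you work with, inherited from a typo in \eqref{for:asymptoticF1}: combining \eqref{for:M1} and \eqref{for:A} gives the leading term $\frac{c'-8\pi^2x^2(2-x)}{4}k^{-1}$, not $\frac{c'-8\pi^2x(2-x)}{4}k^{-1}$ (this is also what Remark~\ref{rem:spectralordering} encodes, where the sign change of $D^{(1)}_c$ occurs at the root $\gamma_5(c)$ of $8\pi^2x^3-16\pi^2x^2+c'=0$). Consequently your monotonicity claim fails --- $x^2(2-x)$ is not decreasing on $(1,2)$; it increases up to $x=\frac43$ and decreases thereafter --- and your formula $\xi_c=1+\sqrt{1-c'/(8\pi^2)}$ for the sign-change point is wrong (e.g.\ for $c=-5$ it gives $1.655\ldots$ versus the correct $\gamma_5(-5)=1.8297\ldots$). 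The damage is reparable: since $x^2(2-x)\ge 1$ on $[1,\frac{1+\sqrt5}{2}]$ and is decreasing from $8\pi^2>c'$ down to $0$ beyond that, the corrected coefficient still changes sign exactly once, and your specific choices $x_1=\frac32$, $x_2=1.99$ still yield uniform bounds ($\gamma_c(x)\le 45-8\pi^2<0$ on $[1,\frac32]$ and $\gamma_c(x)\ge 5-8\pi^2(1.99)^2(0.01)>0$ on $[1.99,2]$). But as written, the middle paragraph of your proof argues from the wrong function, so you should redo that step with $x^2(2-x)$ and identify the crossing point with $\gamma_5(c)$ rather than $\xi_c$.
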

\begin{proof}
 We write $D^{(1)}_c(k;x)=B(1-\cos{\frac{2\pi x}{B}})(x-X(x;k,c))$ with 
\[
 X(x;k,c)=\frac{C}{B}-\frac{C-A}{B(1-\cos{\frac{2\pi x}{B}})}.
\]
%\begin{align*}
% F^{(1)}(x)
%&=(Bx+(C-Bx)\cos{\frac{2\pi x}{B}})-A\\
%&=C\cos{\frac{2\pi x}{B}}+Bx(1-\cos{\frac{2\pi x}{B}})-A\\
%&=-C(1-\cos{\frac{2\pi x}{B}})+Bx(1-\cos{\frac{2\pi x}{B}})+C-A\\
%&=B\Bigl(1-\cos{\frac{2\pi x}{B}}\Bigr)\Bigl(x-\frac{C}{B}+\frac{C-A}{B(1-\cos{\frac{2\pi x}{B}})}\Bigr)
%\end{align*}
 Since $B(1-\cos{\frac{2\pi x}{B}})>0$, $D^{(1)}_c(k;x)<0$ if and only if $x<X(x;k,c)$.
%\begin{equation}
%\label{def:X}
% x<\frac{C}{B}-\frac{C-A}{B(1-\cos{\frac{2\pi x}{B}})}=X(x;k,c).
%\end{equation}
 Hence, noticing that $C>A$ and $\cos\frac{2\pi x}{B}$ is monotone decreasing for $1<x<2$ when $k\ge 3$, % because $C^2-A^2=c'>0$,
 one finds that 
 if $1<x<X(1;k,c)$ (resp. $X(2;k,c)<x<2$), then $D^{(1)}_c(k;x)<0$ (resp. $D^{(1)}_c(k;x)>0$). 
 Here, from the expansion
\[
 X(x;k,c)=2-\frac{c'}{8\pi^2 x^2}+O(k^{-1}),
\]
 for a given $\varepsilon>0$, 
 there exists $k(\varepsilon;x,c)\in\mathbb{N}$ such that for any $k\ge k(\varepsilon;x,c)$
 we have $2-\frac{c'}{8\pi^2 x^2}-\varepsilon<X(x;k,c)<2-\frac{c'}{8\pi^2 x^2}+\varepsilon$.
 This implies that for any $k\ge \max\{k(\varepsilon;1,c),k(\varepsilon;2,c)\}$
 we have $2-\frac{c'}{8\pi^2}-\varepsilon< X(1;k,c)$ and $X(2;k,c)<2-\frac{c'}{32\pi^2}+\varepsilon$.
 Therefore, we can take $x_1$ and $x_2$ in the assertion as
\begin{align*}
 x_1=&\min_{c\in\{\pm 1,\pm 3,\pm 5\}}\overline{x}_1(c)=\overline{x}_1(-5)=1.4300\cdots,\\
 x_2=&\max_{c\in\{\pm 1,\pm 3,\pm 5\}}\underline{x}_2(c)=\underline{x}_2(5)=1.9841\cdots,
\end{align*}
 where $\overline{x}_1(c)=2-\frac{c'}{8\pi^2}$ and $\underline{x}_2(c)=2-\frac{c'}{32\pi^2}$.
% then they possess the desired properties.
\end{proof}

 Now we state the main result in this subsection,
 which follows immediately from Lemma~\ref{lem:m1}.  

\begin{thm}
\label{thm:pqinJallc}
 There exists constants $\xi_1$ and $\xi_2$ with $1<\xi_1<\xi_2<4$
% which are not depend on neither $k$ nor $c\in\{\pm 1,\pm 3,\pm 5\}$,
 such that, for sufficiently large $m=pq$ of type $(\II)$,
\begin{itemize}
 \item[$\mathrm{(1)}$] $m$ is exceptional if $1<\frac{q}{p}<\xi_1$.
 \item[$\mathrm{(2)}$] $m$ is ordinary if $\xi_2<\frac{q}{p}<4$.
\end{itemize}
\qed
\end{thm}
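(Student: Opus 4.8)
The plan is to transfer the two regimes obtained for the auxiliary variable $x=\sqrt{q/p}$ in Lemma~\ref{lem:m1} back to the ratio $q/p$ itself, and to control the error terms uniformly well enough that the conclusions survive the passage from the smooth functions $D^{(i)}_c(k;x)$ to the actual integers $m=pq\in J$. Recall that $m$ is exceptional precisely when $\hat\mu\le\RB$, and by Lemma~\ref{lem:candidatepq} we have $\hat\mu=\max\{\mu^{(0)},\mu^{(1)},\mu^{(2)}\}$. Writing $x=\sqrt{q/p}$ so that $p=B/x$, $q=Bx$ with $B=\sqrt m$, the three quantities $\mu^{(i)}-\RB$ are exactly $D^{(i)}_c(k;x)$ evaluated at the relevant $x$. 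Thus $m$ is exceptional iff $D^{(0)}_c(k;x)\le 0$, $D^{(1)}_c(k;x)\le 0$ and $D^{(2)}_c(k;x)\le 0$ simultaneously, and $m$ is ordinary iff at least one of these is positive.

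The first step is to dispose of $D^{(0)}$ and $D^{(2)}$: by Lemma~\ref{lem:m02} both are negative for all $1<x<2$ once $k\ge k_0$, so for large $m$ they never obstruct exceptionality. Hence, for sufficiently large $m=pq$ of type $(\II)$, the sign of $D^{(1)}_c(k;x)$ alone decides the dichotomy: $m$ is exceptional iff $D^{(1)}_c(k;x)\le 0$, and ordinary iff $D^{(1)}_c(k;x)>0$. The second step is to invoke Lemma~\ref{lem:m1}: there are $1<x_1<x_2<2$ (independent of $c$ and valid for all $k\ge k_0$) with $D^{(1)}_c(k;x)<0$ for $1<x<x_1$ and $D^{(1)}_c(k;x)>0$ for $x_2<x<2$. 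Setting $\xi_1=x_1^2$ and $\xi_2=x_2^2$, and using $1<\frac qp<4 \iff 1<x<2$ together with the monotonicity $x\mapsto x^2$, we get $1<\xi_1<\xi_2<4$, and $1<\frac qp<\xi_1$ forces $1<x<x_1$, hence $D^{(1)}_c<0$ and $m$ exceptional; while $\xi_2<\frac qp<4$ forces $x_2<x<2$, hence $D^{(1)}_c>0$ and $m$ ordinary. With the explicit values from Lemma~\ref{lem:m1}, one may take $\xi_1=x_1^2=(1.4300\cdots)^2=2.044\cdots$ and $\xi_2=x_2^2=(1.9841\cdots)^2=3.936\cdots$.

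The one point that needs genuine care — and which I expect to be the main obstacle — is the uniformity in the passage from "for each fixed $x$" asymptotics to a statement about the discrete set of admissible $(p,q)$. The asymptotic expansions in Lemma~\ref{lem:Mi} and the estimate for $X(x;k,c)$ in the proof of Lemma~\ref{lem:m1} are stated pointwise in $x$, but for a given $m=pq$ the relevant value $x=\sqrt{q/p}$ varies with $m$; one must check that the $O(k^{-1})$ error in $X(x;k,c)$ is uniform over $x\in[1,2]$ (it is, since all the trigonometric arguments lie in a fixed compact range and $B\sim k$), so that a single threshold $k_0$ works for the whole strip. Once this uniformity is in hand the theorem follows immediately, as asserted after the statement of Lemma~\ref{lem:m1}; the intermediate zone $\xi_1\le \frac qp\le\xi_2$ is deliberately left undecided here and is exactly where the finer analysis of \S4 (and the number-theoretic input) is needed.
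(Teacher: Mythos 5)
Your argument is correct and is exactly the proof the paper intends (the paper simply asserts the theorem "follows immediately from Lemma~\ref{lem:m1}" and, in the proof of Theorem~\ref{thm:RamanujanHL}, takes $\xi_1=x_1^2$ and $\xi_2=x_2^2$ just as you do): you reduce exceptionality to the simultaneous nonpositivity of the $D^{(i)}_c$, kill $D^{(0)}_c$ and $D^{(2)}_c$ via Lemma~\ref{lem:m02}, and read off the sign of $D^{(1)}_c$ from Lemma~\ref{lem:m1}. Your remark on uniformity in $x$ of the error terms is a worthwhile point that the paper glosses over, and your resolution of it is right.
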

%\begin{proof}
% Take $\xi_i=x_i^2$.
% Take $\xi_1=\overline{x}_1(-5)^2=2.0451\ldots$ and $\xi_2=\underline{x}_2(5)^2=3.9369\ldots$, respectively. 
% If $1<\frac{q}{p}<\xi_1$, then, from Lemma~\ref{lem:m1}, we have $\mu^{(1)}<\RB_{l_0+2}$ and hence,
% together with Lemma~\ref{lem:m02}, 
% $\hat{\mu}_{l_0+2}=\max\{\mu^{(0)},\mu^{(1)},\mu^{(2)}\}<\RB_{l_0+2}$.
% This shows that $m$ is exceptional.
% On the other hand, if $\xi_2<\frac{q}{p}<4$, then, from Lemma~\ref{lem:m1} again,
% we have $\mu^{(1)}>\RB_{l_0+2}$ and hence $m$ is ordinary.
%\end{proof}

\begin{remark}
\label{rem:spectralordering}
 From Lemma~\ref{lem:Mi}, 
 one can find the asymptotic order of $\mu^{(0)}$, $\mu^{(1)}$, $\mu^{(2)}$ and $\RB$.
 Actually, the expansions \eqref{for:M0}, \eqref{for:M1}, \eqref{for:M2} and \eqref{for:A} assert
\begin{align*}
%\label{for:specorder}
\begin{cases}
 \mu^{(1)}<\mu^{(2)}<\mu^{(0)}<\RB_{l_0+2} & (1<x<\gamma_1),\\
 \mu^{(1)}<\mu^{(0)}<\mu^{(2)}<\RB_{l_0+2} & (\gamma_1<x<\gamma_2),\\
 \mu^{(1)}<\mu^{(2)}<\mu^{(0)}<\RB_{l_0+2} & (\gamma_2<x<\gamma_3),\\
 \mu^{(2)}<\mu^{(1)}<\mu^{(0)}<\RB_{l_0+2} & (\gamma_3<x<\gamma_4),\\
 \mu^{(2)}<\mu^{(0)}<\mu^{(1)}<\RB_{l_0+2} & (\gamma_4<x<\gamma_5(c)),\\
 \mu^{(2)}<\mu^{(0)}<\RB_{l_0+2}<\mu^{(1)} & (\gamma_5(c)<x<2),
\end{cases}
\end{align*}
 for sufficiently large $k>0$.
 Here $\gamma_1$, $\gamma_2$, $\gamma_3$, $\gamma_4$ and $\gamma_5(c)$ 
 are the real roots in the interval $(1,2)$ of the equations
 $2x^3-6x+3=0$,
 $x^3-12x+15=0$,
 $x^6-2x^5+8x-10=0$,
 $3x^3-6x^2+2=0$ and 
 $8\pi^2x^3-16\pi^2x^2+c'=0$, respectively.
 Remark that % let $\overline{x}_1(c):=2-\frac{c'}{8\pi^2}$ and $\underline{x}_2(c):=2-\frac{c'}{32\pi^2}$.
 one can numerically check the inequality $\overline{x}_1(c)<\gamma_5(c)<\underline{x}_2(c)$ as the table below,
 where $\overline{x}_1(c)=2-\frac{c'}{8\pi^2}$ and $\underline{x}_2(c)=2-\frac{c'}{32\pi^2}$ are defined in the proof of
 Lemma~\ref{lem:m1}.

\begin{table}[htbp]
\begin{center} 
{\renewcommand\arraystretch{1.2}
\begin{tabular}{c||c|c|c|c|c|c|c}
 $c$ & $\gamma_1$ & $\gamma_2$ & $\gamma_3$ & $\gamma_4$ & $\overline{x}_1(c)$ & $\gamma_5(c)$ &
 $\underline{x}_2(c)$  \\
\hline
\hline
 $-5$ &
 \multirow{6}{43pt}{1.3843\ldots} &
 \multirow{6}{43pt}{1.5765\ldots} &
 \multirow{6}{43pt}{1.7579\ldots} &
 \multirow{6}{43pt}{1.7925\ldots} &
 $1.4300\ldots$ & $1.8297\ldots$ & $1.8575\ldots$ \\
\cline{1-1}\cline{6-8}
 $-3$ & & & & & $1.5313\ldots$ & $1.8653\ldots$ & $1.8828\ldots$ \\
\cline{1-1}\cline{6-8}
 $-1$ & & & & & $1.6327\ldots$ & $1.8980\ldots$ & $1.9081\ldots$ \\
\cline{1-1}\cline{6-8}
 $1$ & & & & & $1.7340\ldots$ & $1.9284\ldots$ & $1.9335\ldots$ \\
\cline{1-1}\cline{6-8}
 $3$ & & & & & $1.8353\ldots$ & $1.9570\ldots$ & $1.9588\ldots$ \\
\cline{1-1}\cline{6-8}
 $5$ & & & & & $1.9366\ldots$ & $1.9839\ldots$ & $1.9841\ldots$ 
\end{tabular} 
}
\end{center} 
 \caption{The explicit values of $\gamma_1$, $\gamma_2$, $\gamma_3$, $\gamma_4$, $\overline{x}_1(c)$, $\gamma_5(c)$ and $\underline{x}_2(c)$.}
\end{table}
 See Figure~1-6 which show actual values of $\mu^{(0)}$, $\mu^{(1)}$, $\mu^{(2)}$ and $\RB$
 for $m=f_c(k)=pq\in J$ with $k=10^4$ for each $c\in\{\pm 1,\pm 3,\pm 5\}$,
 where the horizontal axis shows $x=\sqrt{\frac{q}{p}}$ and 
 the left and right vertical dashed lines describe $\overline{x}_1(c)$ and $\underline{x}_2(c)$, respectively.
 As we have seen in \eqref{for:asymptoticF1},
 the inequality $\mu^{(1)}>\RB$ holds when $x$ is very close to $2$.

\begin{figure}[htbp]
\begin{center}
\begin{tabular}{ccc}
  \begin{minipage}{0.33\textwidth}
    \begin{center}
     \includegraphics[clip,width=50mm]{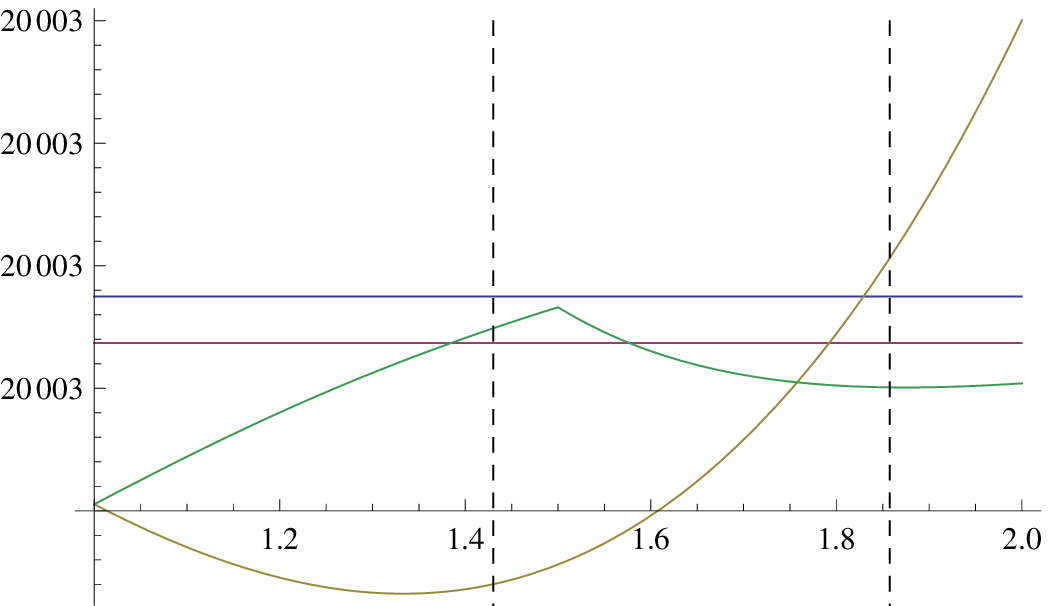}
     \caption{$c=-5$.}
    \end{center}
   \end{minipage}
   \begin{minipage}{0.33\textwidth}
    \begin{center}
     \includegraphics[clip,width=50mm]{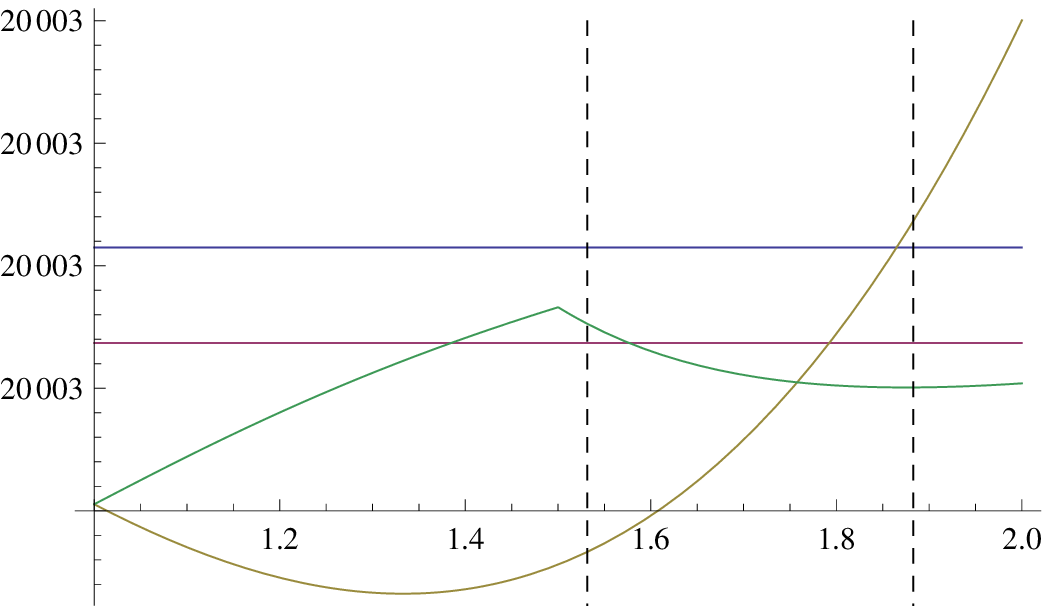}
     \caption{$c=-3$.}
    \end{center}
   \end{minipage}
      \begin{minipage}{0.33\textwidth}
    \begin{center}
     \includegraphics[clip,width=50mm]{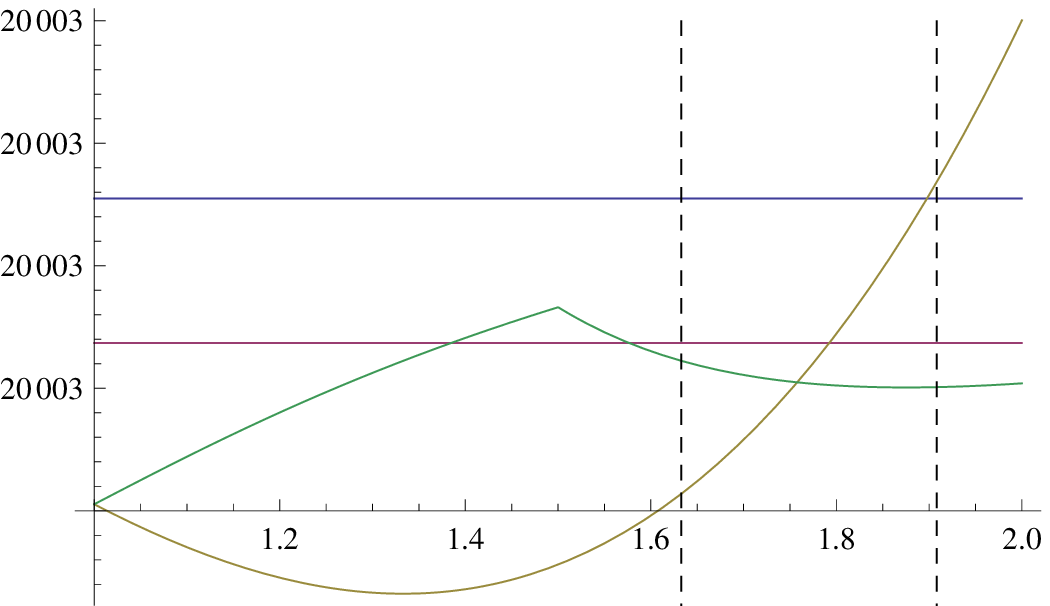}
     \caption{$c=-1$.}
    \end{center}
   \end{minipage}
  \end{tabular}
\end{center}

\begin{center}
\begin{tabular}{ccc}
  \begin{minipage}{0.33\textwidth}
    \begin{center}
     \includegraphics[clip,width=50mm]{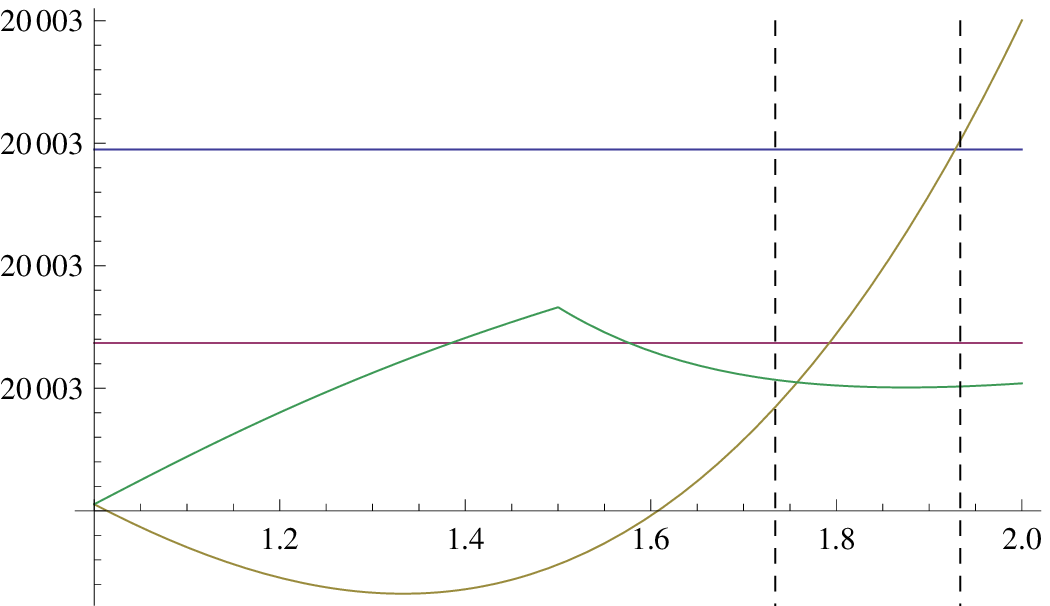}
     \caption{$c=1$.}
    \end{center}
   \end{minipage}
   \begin{minipage}{0.33\textwidth}
    \begin{center}
     \includegraphics[clip,width=50mm]{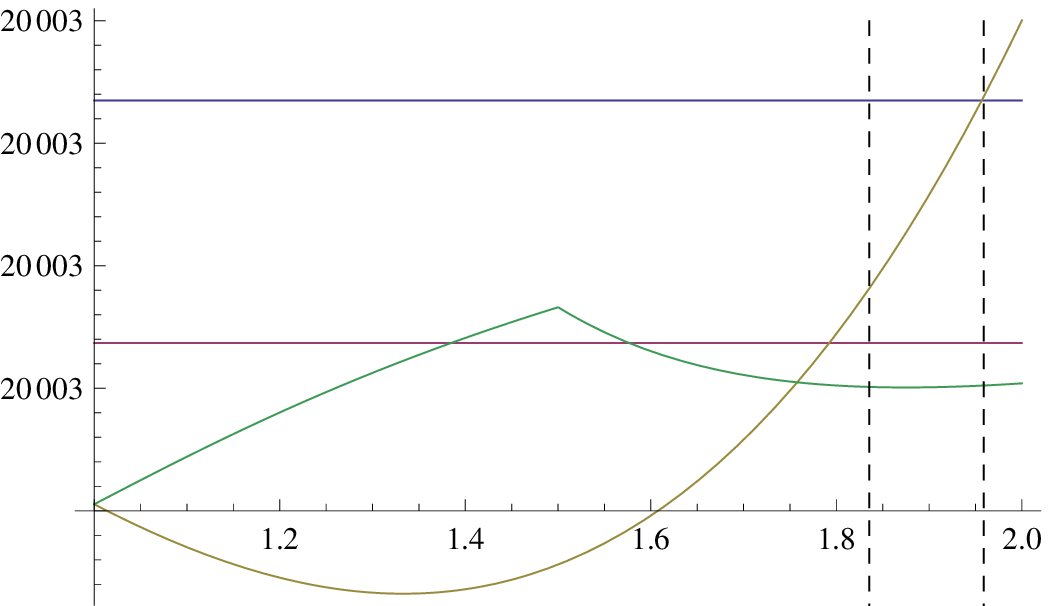}
     \caption{$c=3$.}
    \end{center}
   \end{minipage}
      \begin{minipage}{0.33\textwidth}
    \begin{center}
     \includegraphics[clip,width=50mm]{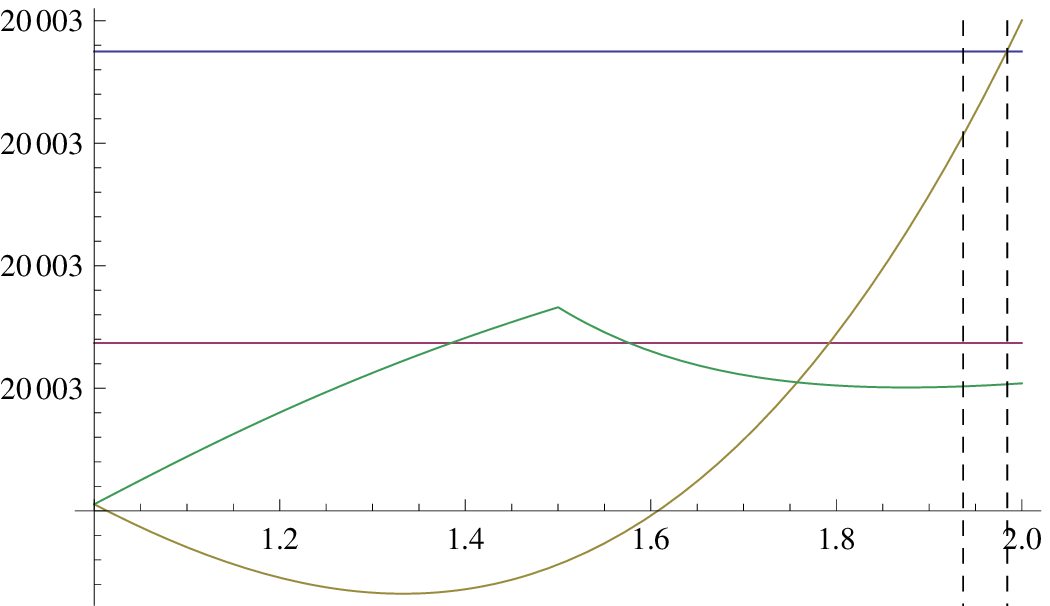}
     \caption{$c=5$.}
    \end{center}
   \end{minipage}
  \end{tabular}
\end{center}
\end{figure}

\end{remark}

\newpage 

\begin{table}[htbp]
\begin{center}
{\small 
{\renewcommand{\arraystretch}{1.0}
\begin{tabular}{c||c|c|c|c|c|c}
 $k$ & $c=-5$ & $c=-3$ & $c=-1$ & $c=1$ & $c=3$ & $c=5$ \\
\hline
\hline
%0&	-&	-&	-&	-&	\blue{\bf 3}&	\blue{\bf 5}\\ \hline
%1&	-&	-&	-&	\blue{\bf 7}&	{\bf 9}&	\blue{\bf 11}\\ \hline
%2&	-&	-&	\blue{\bf 13}&	\red{\bf 15}&	\blue{\bf 17}&	\blue{\bf 19}\\ \hline
%3&	-&	21&	\blue{\bf 23}&	{\bf 25}&	27&	\blue{\bf 29}\\ \hline
4&	-&	33&	\red{\bf 35}&	\blue{\bf 37}&	39&	\blue{\bf 41}\\ \hline
5&	-&	\blue{\bf 47}&	{\bf 49}&	51&	\blue{\bf 53}&	\red{\bf 55}\\ \hline
6&	-&	63&	\red{\bf 65}&	\blue{\bf 67}&	69&	\blue{\bf 71}\\ \hline
7&	-&	81&	\blue{\bf 83}&	85&	87&	\blue{\bf 89}\\ \hline
8&	-&	\blue{\bf 101}&	\blue{\bf 103}&	105&	\blue{\bf 107}&	\blue{\bf 109}\\ \hline
9&	-&	123&	125&	\blue{\bf 127}&	129&	\blue{\bf 131}\\ \hline
10&	-&	147&	\blue{\bf 149}&	\blue{\bf 151}&	153&	155\\ \hline
11&	-&	\blue{\bf 173}&	175&	177&	\blue{\bf 179}&	\blue{\bf 181}\\ \hline
12&	-&	201&	203&	205&	207&	\red{\bf 209}\\ \hline
13&	-&	231&	\blue{\bf 233}&	235&	237&	\blue{\bf 239}\\ \hline
14&	-&	\blue{\bf 263}&	265&	267&	\blue{\bf 269}&	\blue{\bf 271}\\ \hline
15&	-&	297&	\red{\bf 299}&	301&	303&	305\\ \hline
16&	-&	333&	335&	\blue{\bf 337}&	339&	\red{\bf 341}\\ \hline
17&	-&	371&	\blue{\bf 373}&	375&	\red{\bf 377}&	\blue{\bf 379}\\ \hline
18&	-&	411&	413&	415&	417&	\blue{\bf 419}\\ \hline
19&	451&	453&	455&	\blue{\bf 457}&	459&	\blue{\bf 461}\\ \hline
20&	495&	497&	\blue{\bf 499}&	501&	\blue{\bf 503}&	505\\ \hline
21&	\blue{\bf 541}&	543&	545&	\blue{\bf 547}&	549&	\red{\bf 551}\\ \hline
22&	\red{\bf 589}&	591&	\blue{\bf 593}&	595&	597&	\blue{\bf 599}\\ \hline
23&	639&	\blue{\bf 641}&	\blue{\bf 643}&	645&	\blue{\bf 647}&	649\\ \hline
24&	\blue{\bf 691}&	693&	695&	\red{\bf 697}&	699&	\blue{\bf 701}\\ \hline
25&	745&	747&	749&	\blue{\bf 751}&	753&	755\\ \hline
26&	801&	803&	805&	807&	\blue{\bf 809}&	\blue{\bf 811}\\ \hline
27&	\blue{\bf 859}&	861&	\blue{\bf 863}&	865&	867&	869\\ \hline
28&	\blue{\bf 919}&	921&	923&	925&	927&	\blue{\bf 929}\\ \hline
29&	981&	\blue{\bf 983}&	985&	987&	\red{\bf 989}&	\blue{\bf 991}\\ \hline
30&	1045&	1047&	\blue{\bf 1049}&	\blue{\bf 1051}&	1053&	1055\\ \hline
31&	1111&	1113&	1115&	\blue{\bf 1117}&	1119&	\red{\bf 1121}\\ \hline
32&	1179&	\blue{\bf 1181}&	1183&	1185&	\blue{\bf 1187}&	\red{\bf 1189}\\ \hline
33&	\blue{\bf 1249}&	1251&	1253&	1255&	1257&	\blue{\bf 1259}\\ \hline
34&	\blue{\bf 1321}&	1323&	1325&	\blue{\bf 1327}&	1329&	1331\\ \hline
35&	1395&	1397&	\blue{\bf 1399}&	1401&	\red{\bf 1403}&	1405\\ \hline
36&	\blue{\bf 1471}&	1473&	1475&	1477&	1479&	\blue{\bf 1481}\\ \hline
37&	\blue{\bf 1549}&	1551&	\blue{\bf 1553}&	1555&	1557&	\blue{\bf 1559}\\ \hline
38&	1629&	1631&	\red{\bf 1633}&	1635&	\blue{\bf 1637}&	1639\\ \hline
39&	\red{\bf 1711}&	1713&	1715&	1717&	1719&	\blue{\bf 1721}\\ \hline
40&	1795&	1797&	1799&	\blue{\bf 1801}&	1803&	1805\\ \hline
41&	1881&	1883&	1885&	1887&	\blue{\bf 1889}&	\red{\bf 1891}\\ \hline
42&	1969&	1971&	\blue{\bf 1973}&	1975&	1977&	\blue{\bf 1979}\\ \hline
43&	\red{\bf 2059}&	2061&	\blue{\bf 2063}&	2065&	2067&	\blue{\bf 2069}\\ \hline
44&	2151&	\blue{\bf 2153}&	2155&	2157&	2159&	\blue{\bf 2161}\\ \hline
45&	2245&	2247&	2249&	\blue{\bf 2251}&	2253&	2255\\ \hline
46&	\blue{\bf 2341}&	2343&	2345&	\blue{\bf 2347}&	2349&	\blue{\bf 2351}\\ \hline
47&	2439&	\blue{\bf 2441}&	2443&	2445&	\blue{\bf 2447}&	\red{\bf 2449}\\ \hline
48&	\blue{\bf 2539}&	2541&	\blue{\bf 2543}&	2545&	2547&	\blue{\bf 2549}\\ \hline
49&	2641&	2643&	2645&	\blue{\bf 2647}&	2649&	2651\\ \hline
50&	2745&	\red{\bf 2747}&	\blue{\bf 2749}&	2751&	\blue{\bf 2753}&	2755 %\\ \hline
%51&	\blue{\bf 2851}&	2853&	2855&	\blue{\bf 2857}&	2859&	\blue{\bf 2861}\\ \hline
%52&	2959&	2961&	\blue{\bf 2963}&	2965&	2967&	\blue{\bf 2969}\\ \hline
%53&	3069&	\red{\bf 3071}&	3073&	3075&	3077&	\blue{\bf 3079}\\ \hline
%54&	3181&	\blue{\bf 3183}&	3185&	\blue{\bf 3187}&	3189&	\blue{\bf 3191}\\ \hline
%55&	3295&	3297&	\blue{\bf 3299}&	\blue{\bf 3301}&	3303&	3305
\end{tabular}
}
}
 \caption{List of small exceptionals $m=p\in J$ of type ($\I$) (blue bold numbers)
 and $m=pq\in J$ with $p<q<4p$ of type $(\II)$ (red bold numbers).}
\end{center}
\end{table}

\section{Arithmetic consideration}

 Let $m\ge 15$.
 Then, $m$ is one of the followings; type ($\I$), ($\II$) and the others. 
 Remark that, from Theorem~\ref{thm:criterionSl0+2} and Proposition~\ref{thm:primeinJ},
 except for $25$ and $49$,
 exceptionals belong to the set $J$ with both of type ($\I$) and ($\II$).  
 In this section,
 we investigate the existence of infinitely many ordinaries and exceptionals of each type. 

 We first show the following assertion on ordinaries outside of $J$. 

\begin{thm}
 In each type of $(\I)$ and $(\II)$,
 there exists ordinary $m\notin J$ infinitely many.
\end{thm}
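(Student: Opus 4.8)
The plan is to reduce the statement to a crude density comparison, exploiting that the set $J$ is extremely thin. By Theorem~\ref{thm:criterionSl0+2}, every odd $m\ge 15$ with $m\notin J$ is automatically ordinary, so it suffices to produce infinitely many odd primes outside $J$ (for type $(\I)$) and infinitely many products $m=pq$ of two distinct odd primes with $p<q<4p$ and $m\notin J$ (for type $(\II)$). The first step I would record is that $J$ is a union of the eight‑element set $\{2n+1\mid 7\le n\le 14\}$ and the six quadratic progressions $J_c\subseteq\{k^2+5k+c\mid k\ge 1\}$; since $k^2+5k+c\le N$ forces $k\le\sqrt{N}+O(1)$, each $J_c$ meets $[1,N]$ in at most $\sqrt{N}+O(1)$ integers, hence $\#\bigl(J\cap[1,N]\bigr)\le 6\sqrt{N}+O(1)=O(\sqrt{N})$. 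In particular $J$ has density zero.

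For type $(\I)$ this already finishes things: the prime number theorem (or even Chebyshev's lower bound) gives $\pi(N)\gg N/\log N$, which exceeds $\#(J\cap[1,N])=O(\sqrt{N})$ once $N$ is large. Hence for every sufficiently large $N$ there is a prime $p\le N$ with $p\notin J$; such $p$ is ordinary by Theorem~\ref{thm:criterionSl0+2}, and letting $N\to\infty$ yields infinitely many.

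For type $(\II)$ the idea is that even products of two primes of bounded ratio remain far more abundant than $O(\sqrt{N})$. I would count as follows: for each odd prime $p\le\tfrac12\sqrt{N}$ and each prime $q$ with $2p<q<4p$, the integer $m=pq$ satisfies $p<q<4p$ and $m<4p^2\le N$, and distinct pairs $(p,q)$ give distinct $m$ by unique factorization. By Bertrand's postulate the set of such $q$ is nonempty, and by the prime number theorem it has $\gg p/\log p$ elements, so $\#\{m\le N\mid m=pq,\ p<q<4p\text{ odd primes}\}\gg\sum_{p\le\frac12\sqrt{N}}p/\log p\gg N/\log^2 N$. Since $N/\log^2 N$ dominates $\#(J\cap[1,N])=O(\sqrt{N})$ for large $N$, for every large $N$ there is such an $m\le N$ with $m\notin J$, necessarily ordinary; letting $N\to\infty$ completes the argument.

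I do not expect any genuine obstacle here: the entire content is the contrast between the quadratic (density‑zero) set $J$ and the comparatively dense families of primes and of bounded‑ratio semiprimes. The only points needing a little care are the two elementary estimates—the $O(\sqrt{N})$ bound for $\#(J\cap[1,N])$ and the $\gg N/\log^2 N$ lower bound for the type $(\II)$ integers up to $N$—together with the observation that no $m$ is counted twice in the latter. This is of course in sharp contrast with the much harder question of whether infinitely many \emph{exceptionals} exist, which genuinely requires the conjecture of Hardy--Littlewood and Bateman--Horn.
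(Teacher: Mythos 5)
Your argument is correct, but it takes a genuinely different route from the paper. You win by brute density: $J$ is a union of a finite set and six quadratic progressions, hence has only $O(\sqrt{N})$ elements up to $N$, while the primes ($\gg N/\log N$) and the semiprimes $pq$ with $p<q<4p$ ($\gg N/(\log N)^2$, via $\pi(4p)-\pi(2p)\gg p/\log p$ and partial summation) are far more numerous, so infinitely many of each must fall outside $J$ and are then ordinary by Theorem~\ref{thm:criterionSl0+2}. The paper instead argues arithmetically: since $f_c(k)=k^2+5k+c$ has discriminant $c'=25-4c$, a prime $p$ satisfies $p\bZ\cap J=\emptyset$ precisely when $\bigl(\frac{c'}{p}\bigr)=-1$ for all six values of $c$; the Chinese remainder theorem packages this into explicit residue classes modulo $5\cdot 13\cdot 21\cdot 29\cdot 37$, Dirichlet's theorem supplies infinitely many such $p$ (type $(\I)$), and then \emph{any} prime $q\in(p,2p)$ given by Bertrand yields $pq\notin J$ (type $(\II)$). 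The trade-off is instructive: the paper's proof is constructive (it exhibits the admissible congruence classes, e.g.\ $p=97,577,\ldots$) and, because the whole progression $p\bZ$ avoids $J$, it needs only one companion prime $q$ per $p$; your proof is non-constructive and must count $\gg p/\log p$ companions $q$ per $p$ to beat the $O(\sqrt{N})$ bound (a single $q$ per $p$ would only give $\gg\sqrt{N}/\log N$ semiprimes, which does \emph{not} dominate $\sqrt{N}$ --- a point you implicitly handle correctly by summing over all $q\in(2p,4p)$), but in exchange it avoids quadratic residue computations and Dirichlet's theorem entirely, resting only on Chebyshev-type prime counts. Both proofs are complete and valid.
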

\begin{proof}
 It is easy to see that $p\mathbb{Z}\cap J=\emptyset$ if and only if
 $\bigl(\frac{c'}{p}\bigr)=-1$ for all $c\in\{\pm 1,\pm 3,\pm 5\}$
 where $c'=25-4c$ and $\bigl(\frac{\cdot}{p}\bigr)$ is the Legendre symbol.
 Since $\bigl(\frac{c'}{p}\bigr)=-1$ if and only if 
\begin{align*}
  p\equiv
\begin{cases}
 \pm 2 \pmod{5} & (c=\pm 5), \\
 \pm 2,\ \pm 5,\ \pm 6,\ \pm 8,\ \pm 13,\ \pm 14,\ \pm 15,\ \pm 17,\ \pm 18 \pmod{37} & (c=-3),\\
 \pm 2,\ \pm 3,\ \pm 8,\ \pm 10,\ \pm 11,\ \pm 11,\ \pm 12,\ \pm 14 \pmod{29} & (c=-1),\\
 \pm 2,\ \pm 8,\ \pm 10 \pmod{21} & (c=1),\\
 \pm 2,\ \pm 5,\ \pm 6 \pmod{13} & (c=3),
\end{cases}
\end{align*}
 this is equivalent to say that $p$ is of the form $p=at+b$ where 
 $a=5\cdot 13\cdot 21\cdot 29\cdot 37=1464645$, $b\in\{2,8,32,97,128,242,\ldots,1464637,1464643\}$ and $t\in\bZ$
 from the Chinese reminder theorem. 
 The Dirichlet theorem of arithmetic progression tells us
 there exists infinitely many primes of such forms
 (the first few are given by $97,577,827,853,947,\ldots$) and hence we have the assertion of type (I).
% $2\cdot 6\cdot 6\cdot 14\cdot 18=18144$ forms of such $p$;
 Moreover, for each prime $p$ satisfying the above condition,
 one can take a prime $q$ satisfying $p<q<2p$ because of the Bertrand-Chebyshev theorem, 
 and then $pq$ is not in $J$.
 This shows the assertion of type ($\II$). 
\end{proof}

 Next, we discuss about infinitely many existence of both ordinaries and exceptionals inside of $J$
 (we remark that, from Theorem~\ref{thm:exceptionalI}, there are no ordinaries in $J$ of type (I)).
 To state our results,
 we recall the well-known conjecture of Hardy-Littlewood \cite{{HardyLittlewood1923}} and Bateman-Horn \cite{BatemanHorn1962}.

\begin{conj}
\label{conj:HLC}
 Let $f_1(x),\ldots,f_r(x)\in\bZ[x]$ and $f(x)=f_1(x)\cdots f_r(x)$.
 Suppose that $f_1(x),\ldots,f_r(x)$ satisfy the following conditions:
\begin{itemize}
 \item[$\mathrm{(i)}$] $f_1(x),\ldots,f_r(x)$ are distinct.
 \item[$\mathrm{(ii)}$] $f_1(x),\ldots,f_r(x)$ are irreducible in $\bZ[x]$.
 \item[$\mathrm{(iii)}$] The leading coefficients of $f_1(x),\ldots,f_r(x)$ are positive.
 \item[$\mathrm{(iv)}$] There is no prime $\ell$ so that $\ell\,|\,f(n)$ for all $n\in\bZ_{>0}$.
\end{itemize}
 Then, we have 
\begin{align*}
  \pi(f_1,\ldots,f_r;x)
&=\#\bigl\{n\le x\,\bigl|\,f_1(n),\ldots,f_r(n) \ \text{are all prime}\bigr\}\\
&\sim \frac{1}{(\deg{f_1})\cdots (\deg{f_r})}C(f_1,\ldots,f_r)\frac{x}{(\log{x})^r},%\int^{x}_{2}\frac{1}{(\log{t})^r}dt
\end{align*}
 where $C(f_1,\ldots,f_r)$ is the Hardy-Littlewood constant defined by 
\[
 C(f_1,\ldots,f_r)=\prod_{p}\Bigl(1-\frac{\nu_{f}(p)}{p}\Bigr)\Bigl(1-\frac{1}{p}\Bigr)^{-r}
\]
 with $\nu_f(p)$ being the number of solutions $n$ in $\bZ_p$ of the congruence $f(n)\equiv 0 \! \pmod{p}$.
\end{conj}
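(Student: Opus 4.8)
The plan is to recognize at the outset that Conjecture~\ref{conj:HLC} is the general Hardy--Littlewood/Bateman--Horn conjecture, which remains open and for which no unconditional proof is known; accordingly I can only propose the heuristic derivation that makes the asymptotic plausible, together with the partial results that one can actually establish rigorously, and then isolate the precise obstruction. The heuristic comes from a probabilistic model. By the prime number theorem a random integer of size $N$ is prime with ``probability'' about $1/\log N$, so for $n\le x$ one expects $f_i(n)$, which has size roughly $n^{\deg f_i}$, to be prime with probability $\approx 1/(\deg f_i\cdot\log n)$. Treating the $r$ events as independent and summing over $n\le x$ gives the main term $\frac{x}{(\deg f_1)\cdots(\deg f_r)(\log x)^r}$. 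The independence assumption is false at the level of each prime $\ell$: the density of $n$ avoiding $\ell\mid f(n)$ is $(1-\nu_f(\ell)/\ell)$ rather than the naive $(1-1/\ell)^{r}$, and condition~(iv) guarantees $\nu_f(\ell)<\ell$ for every $\ell$ so that no local obstruction kills the count. Correcting the model prime by prime multiplies the main term by exactly the Hardy--Littlewood constant $C(f_1,\ldots,f_r)=\prod_{\ell}(1-\nu_f(\ell)/\ell)(1-1/\ell)^{-r}$, and a separate step must show that this product converges conditionally, which one does by comparing $\sum_\ell(\nu_f(\ell)-r)/\ell$ to the logarithmic derivative of a product of Dedekind zeta functions attached to the splitting fields of the $f_i$.

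For the rigorous side I would proceed by cases according to what current technology permits. When $r=1$ and $\deg f_1=1$ the statement is Dirichlet's theorem on primes in arithmetic progressions with the correct asymptotic density, and this I would invoke directly. For $r\ge 1$ with higher degree, sieve methods (the Selberg upper-bound sieve and the large sieve) yield an upper bound $\pi(f_1,\ldots,f_r;x)\ll x/(\log x)^r$ of the conjectured order of magnitude, so the truth of the conjecture is at least consistent with what can be proved from above. For lower bounds one relaxes ``prime'' to ``almost-prime'': here I would cite the result of Iwaniec~\cite{Iwaniec1978} (and \cite{LemkeOliver2012}) producing infinitely many $n$ for which a single quadratic polynomial takes a value with at most two prime factors, which is exactly the flavour of input the present paper needs for its exceptional values $m=pq$. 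Combining an upper-bound sieve with such almost-prime lower bounds gives the correct order of magnitude in favourable cases but never the sharp asymptotic constant.

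The main obstacle is the \emph{parity problem} of sieve theory. A sieve cannot distinguish integers with an even number of prime factors from those with an odd number, so on a sparse polynomial sequence it can never, by itself, isolate the genuine primes from the almost-primes; it can only bound the count from above and produce almost-primes from below. Breaking parity requires additional arithmetic input in the form of nontrivial cancellation in bilinear (Type~II) sums over the sequence, as achieved by Friedlander--Iwaniec for $a^2+b^4$ and by Heath-Brown for $a^3+2b^3$. No such bilinear estimate is known for an arbitrary admissible family $f_1,\ldots,f_r$ of the generality in Conjecture~\ref{conj:HLC}; indeed even the single case $f_1(x)=x^2+1$ is open. I therefore expect that a full proof along these lines is out of reach, and for the purposes of this paper the conjecture should be retained as a hypothesis, with the heuristic above explaining why the asymptotic---and in particular the infinitude of primes represented by the quadratics $f_c(k)=k^2+5k+c$ relevant to the exceptional $m$---is to be expected.
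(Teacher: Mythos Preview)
Your assessment is correct: the statement is labeled a \emph{conjecture} in the paper and is not proved there. The paper simply records it (with attribution to \cite{HardyLittlewood1923} and \cite{BatemanHorn1962}) and then uses it as a standing hypothesis in Theorem~\ref{thm:RamanujanHL} and the surrounding discussion; there is no proof to compare against. Your recognition that no unconditional proof exists, and your decision to supply only the standard heuristic together with the known partial results (Dirichlet for $r=1$, $\deg f_1=1$; sieve upper bounds; the almost-prime results of \cite{Iwaniec1978,LemkeOliver2012}; the parity obstruction), is exactly the right response and in fact goes well beyond what the paper itself offers.
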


 Now, we can state our results.

\begin{thm}
\label{thm:RamanujanHL}
 Under Conjecture~\ref{conj:HLC},
\begin{enumerate}
 \item[$(\mathrm{1})$] there exists exceptional $m$ infinitely many both of types $(\I)$ and $(\II)$.
 \item[$(\mathrm{2})$] there exists ordinary $m$ infinitely many of type $(\II)$.
\end{enumerate} 
\end{thm}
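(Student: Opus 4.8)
The plan is to realize the sought integers $m$ as values of one or two polynomials and to apply Conjecture~\ref{conj:HLC}, after which Theorems~\ref{thm:exceptionalI} and~\ref{thm:pqinJallc} do the classification; throughout I would take $c=1$ (so that $c'=21$ is not a perfect square). For the exceptionals of type $(\I)$, Theorem~\ref{thm:exceptionalI} reduces the problem to finding infinitely many primes in $J_1\subset J$, so I would apply Conjecture~\ref{conj:HLC} with $r=1$ to $f_1(k)=k^2+5k+1$: it is irreducible (discriminant $21$) and monic, $f_1(k)$ is always odd, and $f_1$ is a genuine quadratic modulo each odd prime, so hypothesis (iv) holds; hence $f_1(k)$ is prime for infinitely many $k\ge 4$, giving infinitely many exceptional primes $m=f_1(k)\in J_1$.

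For the exceptionals of type $(\II)$ I need infinitely many $m=pq\in J$ with $p<q$ prime and $q/p<\xi_1$. The key point is that, although $f_1$ is irreducible so that $f_1(k)=pq$ is never a polynomial factorization, $f_1$ becomes reducible after a quadratic substitution: from $f_c(k)-f_c(a)=(k-a)(k+a+5)$, taking $a=t$ and $k=h(t):=t+f_1(t)=t^2+6t+1$ gives
\[
 f_1(h(t))=f_1(t)\bigl(f_1(t)+2t+6\bigr)=(t^2+5t+1)(t^2+7t+7).
\]
I would then apply Conjecture~\ref{conj:HLC} with $r=2$ to $g_1(t)=t^2+5t+1$ and $g_2(t)=t^2+7t+7$, which are monic, distinct, both irreducible (discriminant $21$), and admit no common fixed prime divisor (for instance $g_1(0)g_2(0)=7$ is prime to $6$, while for $\ell\ge 5$ at most four residues are bad). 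This yields infinitely many $t$ with $p:=g_1(t)$ and $q:=g_2(t)$ simultaneously prime; for such large $t$, $m:=pq=f_1(h(t))\in J_1$ and $q/p=1+(2t+6)/(t^2+5t+1)\to 1$, so $1<q/p<\xi_1$, $q<4p$, and $m$ is of type $(\II)$, hence exceptional by Theorem~\ref{thm:pqinJallc}(1).

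For the ordinaries of type $(\II)$ I need instead $q/p$ near $4$, since by Theorem~\ref{thm:pqinJallc}(2) a large type-$(\II)$ integer with $\xi_2<q/p<4$ is ordinary. Completing the square in $4f_1(k)=(2k+5)^2-21$ and writing $4p=(2s+1)^2-21$ suggests the substitution $k=2s^2+s-13$ (so that $2k+5=4s^2+2s-21$), which gives
\[
 f_1(k)=(s^2+s-5)(4s^2-21),
\]
checked directly from $4f_1(k)=\bigl((2s+1)^2-21\bigr)(4s^2-21)$. I would apply Conjecture~\ref{conj:HLC} to $g_1(s)=s^2+s-5$ and $g_2(s)=4s^2-21$, which are irreducible, distinct, with positive leading coefficients $1$ and $4$, and for which hypothesis (iv) again holds ($g_1$ is always odd, $s\equiv 2\pmod 3$ makes $g_1g_2\not\equiv 0$, and at most four residue classes are bad modulo $\ell\ge 5$); this produces infinitely many $s$ with $p:=g_1(s)$ and $q:=g_2(s)$ both prime. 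For such large $s$, $m:=pq=f_1(k)\in J_1$, while $4p-q=4s+1>0$ and $q/p\to 4$, so $\xi_2<q/p<4$ and $m$ is of type $(\II)$ and ordinary by Theorem~\ref{thm:pqinJallc}(2).

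I expect the genuine difficulty to be the discovery of the two polynomial identities above: one must exhibit a quadratic substitution that makes $f_c$ reducible and, simultaneously, control the limiting ratio of the two resulting factors, pinning it to $1$ (for the exceptionals) or to $4$ (for the ordinaries of type $(\II)$). Once the identities are available, the remaining steps — irreducibility via the non-square discriminant $c'$, and verification of the fixed-divisor hypothesis (iv) — are routine finite checks; I single out $c=1$ precisely so that nothing in these constructions is identically divisible by $2$ or by $3$ (for instance, $c=\pm 3$ would already violate (iv) modulo $3$ in the type-$(\II)$ construction above).
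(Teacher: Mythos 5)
Your proposal is correct, and for the type-$(\II)$ constructions it takes a genuinely different route from the paper. The paper's Lemma~\ref{lem:pq} sets up a single two-parameter family $p(a,y)$, $q(a,y)$, $k(a,y)$ with $pq=f_c(k)$ and $\lim_{y\to\infty}q/p=\bigl(2-\tfrac{2}{2a+1}\bigr)^2$, and then specializes $a=1$ (ratio $\to 16/9<\xi_1$) to get exceptionals and $a=64$ (ratio $\to(254/129)^2>\xi_2$) to get ordinaries, checking conditions (i)--(iv) of Conjecture~\ref{conj:HLC} uniformly in $a$ and $c$ via the discriminants $c'a^2(2a+1)^4$ and $2^8c'a^6$. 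You instead fix $c=1$ and exhibit two separate one-parameter substitutions: $k=t^2+6t+1$ giving $f_1(k)=(t^2+5t+1)(t^2+7t+7)$ with ratio tending to $1$, and $k=2s^2+s-13$ giving $f_1(k)=(s^2+s-5)(4s^2-21)$ with ratio tending to $4$ from below (since $4p-q=4s+1>0$). I verified both identities (e.g.\ $t=1$ gives $7\cdot 15=105=f_1(8)$ and $s=3$ gives the same factorization), and your checks of irreducibility (discriminant $21$ in each case) and of condition (iv) at $\ell=2,3$ are sound; the remaining steps (distinctness of $p<q$, membership $m\in J_1$, and the appeal to Theorems~\ref{thm:exceptionalI} and~\ref{thm:pqinJallc}) go through. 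What your approach buys is simplicity: the limiting ratios $1$ and $4$ sit comfortably inside $(1,\xi_1)$ and $(\xi_2,4)$ respectively, with no need to compare against the numerical thresholds, and the identities are immediate from $f_c(k)-f_c(a)=(k-a)(k+a+5)$ and from completing the square. What the paper's uniform family buys is flexibility: it realizes every $c\in\{\pm1,\pm3,\pm5\}$ and a whole range of limiting ratios $(2-\tfrac{2}{2a+1})^2$, which the authors reuse in \S 5 (Remark after Conjecture~\ref{conj:EXPECTEDexceptionalpq}) to derive the lower bound $\pi^{(\II)}_E(c;x)\gg x/(\log x)^2$ for each $c$; your two substitutions would only give this for $c=1$ unless generalized.
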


 To prove the assertion, we use the following lemma.

\begin{lem}
\label{lem:pq}
 For $a,y\in\mathbb{Z}_{>0}$ and $c\in\mathbb{Z}$, let
\begin{align*}
%\label{def:p}
 p&=p(a,y)=a^2(2a+1)^2y^2-a(2a+1)(8a+5)y+(4c-9)a^2+(4c-5)a+c,\\
%\label{def:q}
 q&=q(a,y)=16a^4y^2-8a^2(8a+1)y+4(4c-9)a^2+16a+1,\\
%\label{def:k}
 k&=k(a,y)=4a^3(2a+1)y^2-a(32a^2+20a+1)y+2(4c-9)a^2+(4c-1)a.
\end{align*}
\begin{enumerate}
 \item[$\mathrm{(1)}$] The identity $pq=k^2+5k+c$ holds.
 Moreover, $p<q<4p$ for $y\gg 0$ with  
\begin{align*}
 \lim_{y\to\infty}\frac{q}{p}=\Bigl(2-\frac{2}{2a+1}\Bigr)^2<4, \quad 
 \lim_{a\to\infty}\lim_{y\to\infty}\frac{q}{p}=4. 
\end{align*}
 \item[$\mathrm{(2)}$] If we consider $p$ and $q$ as polynomials in $\mathbb{Z}[y]$,
 then each of them satisfies the four conditions in Conjecture~\ref{conj:HLC} for any $c\in\{\pm 1,\pm 3,\pm 5\}$
 when $a\equiv 1,4,7,13\pmod{15}$.
\end{enumerate}
\end{lem}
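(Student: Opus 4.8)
The plan is to reduce both parts to explicit manipulations of the three polynomials $p(a,y),q(a,y),k(a,y)$, isolating the only genuinely number-theoretic step, condition~(iv) of Conjecture~\ref{conj:HLC}.

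\emph{Part (1).} The identity $pq=k^2+5k+c$ is a polynomial identity in $y$: both sides are polynomials of degree $4$ in $y$ with leading coefficient $16a^6(2a+1)^2$, so it suffices to expand and compare the five coefficients, which I would carry out directly. For the size comparison I would write $q(a,y)/p(a,y)$ as the quotient of two quadratics in $y$; the leading coefficients are $16a^4$ and $a^2(2a+1)^2$, whence
\[
 \lim_{y\to\infty}\frac{q(a,y)}{p(a,y)}
 =\frac{16a^4}{a^2(2a+1)^2}
 =\Bigl(\frac{4a}{2a+1}\Bigr)^2
 =\Bigl(2-\frac{2}{2a+1}\Bigr)^2 ,
\]
and $1<\frac{4a}{2a+1}<2$ for every $a\in\bZ_{>0}$, so this limit lies strictly in $(1,4)$. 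Since the leading coefficient of $p$ is positive, for all sufficiently large $y$ we then have $p(a,y)>0$ and $1<q(a,y)/p(a,y)<4$, i.e.\ $p<q<4p$; letting $a\to\infty$ afterwards gives $\lim_{a\to\infty}\lim_{y\to\infty}q/p=4$.

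\emph{Part (2), conditions (i)--(iii).} Condition~(iii) is immediate from positivity of the leading coefficients $a^2(2a+1)^2$ and $16a^4$, and these differ for every $a\in\bZ_{>0}$ (they agree only at $a=\frac12$), so $p\ne q$, giving~(i). For~(ii) I would compute the $y$-discriminants
\[
 \mathrm{disc}\,p=c'\,a^2(2a+1)^4 ,\qquad \mathrm{disc}\,q=2^8\,c'\,a^6 ,
\]
with $c'=25-4c$; since $c'\in\{5,13,21,29,37,45\}$ is never a perfect square, neither discriminant is, so $p$ and $q$ have no rational root and are irreducible over $\mathbb{Q}$. To upgrade this to irreducibility in $\bZ[y]$ I would verify that $p$ and $q$ are primitive: any prime dividing the leading coefficient of $q$ divides $a$ and hence cannot divide the (odd) constant term $4(4c-9)a^2+16a+1\equiv1$; and a prime $\ell$ dividing $a^2(2a+1)^2$ either divides $a$, in which case the constant term of $p$ reduces to $c\not\equiv0\pmod\ell$ (here $3\nmid a$ and $5\nmid a$ are used to exclude $\ell\mid c$), or divides $2a+1$, in which case four times the constant term of $p$ reduces to $1\pmod\ell$. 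Thus the greatest common divisor of the leading and constant coefficients is $1$ in each case, so both polynomials are primitive, hence irreducible in $\bZ[y]$.

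\emph{Part (2), condition (iv).} This is the substantive step, and the point at which the hypothesis $a\equiv1,4,7,13\pmod{15}$ — equivalently $a\equiv1\pmod3$ and $5\nmid a$ — is actually needed. For $\ell\ge5$ it is automatic: $pq$ is primitive by Gauss's lemma, hence not identically zero modulo $\ell$, so it has at most $\deg(pq)=4$ roots modulo $\ell$, and $4<\ell$. For $\ell=2$: $q(n)$ is always odd (only its constant term is odd), and $p(n)\equiv an(n+1)+1\equiv1\pmod2$ for every $n$. For $\ell=3$: since $3\mid2a+1$, every $y$-dependent term of $p$ vanishes modulo $3$ and $p(n)\equiv1\pmod3$ for all $n$, while $q(n)\equiv n^2+(c+2)\pmod3$ takes the two distinct values $c+2$ and $c$ modulo $3$, at least one of which is nonzero, so a suitable $n$ works. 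This last step genuinely fails for $a\equiv2\pmod3$ (then $p\bmod3$ and $q\bmod3$ are both nontrivial quadratics in $n$ whose zero-sets already cover all of $\bZ/3\bZ$ when $c=\pm3$), which is precisely why the congruence on $a$ is imposed; the condition $5\nmid a$ plays the analogous role for $\ell=5$ when $c=\pm5$. Combining the cases $\ell=2$, $\ell=3$, $\ell\ge5$ gives~(iv), and together with (i)--(iii) this proves the lemma. I expect this bookkeeping for the small primes $\ell=2,3$ to be the only delicate part of the argument.
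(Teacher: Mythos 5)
Your proposal is correct and follows essentially the same route as the paper's proof: direct verification of the identity $pq=k^2+5k+c$ and of $p<q<4p$ via leading coefficients, the non-square discriminants $c'a^2(2a+1)^4$ and $2^8c'a^6$ for irreducibility, and primitivity of $p$ and $q$ reducing condition (iv) to the small primes $\ell=2,3$ (with $\ell\ge 5$ killed by the degree bound), ending with the same residue computations that show why $a\equiv 1\pmod{3}$ and $5\nmid a$ are imposed. The only cosmetic difference is that you exploit $3\mid 2a+1$ to see $p(n)\equiv 1\pmod{3}$ at once, where the paper tabulates $p(y)$ and $q(y)$ at $y=0,1,2$ for general $a$ and then isolates the failing classes $(a,c)\equiv(0,0),(2,0)\pmod{3}$.
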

\begin{proof}
 The identity $pq=k^2+5k+c$ and the above limit formulas for $\frac{q}{p}$ can be checked directly. 
 Moreover, since the coefficients of $y^2$ in both $q-p$ and $4p-q$ are positive for all $a>0$,
% from the fact that there exists finitely many $m\in J$ such that
% it is of the form of both $m=p(4p-1)$ and $p(4p-3)$ (see Remark~\ref{rem:finitem}),
 one sees that $p<q<4p$ for $y\gg 0$.
 Now, let us write $p=p(y)$ and $q=q(y)$.
 For all $a>0$, $p(y)$ and $q(y)$ satisfy the conditions $\mathrm{(i)}$ and $\mathrm{(iii)}$ obviously.
 Moreover, one sees that $(\mathrm{ii})$ is also true for all $a>0$ and $c\in\{\pm 1,\pm 3,\pm 5\}$
 since $p(y)$ and $q(y)$ have the non-square discriminants $c'a^2(2a+1)^4$ and $2^8c'a^6$, respectively.

 Put $d_p$ (resp. $d_q$) the greatest common divisor of the coefficients of $p(y)$ (resp. $q(y)$).
 Under the primitive situation of $p$ and $q$, that is, $d_p=d_q=1$, 
 it is sufficient to check the condition $(\mathrm{iv})$ only for the case $\ell=2,3$ because $\deg p=\deg q=2$.

 At first, it is easy to see that $d_p=((a,5)a,c)$ and $d_q=1$.
 Therefore, for all $a>0$ and $c\in\{\pm 1,\pm 3,\pm 5\}$ with $(a,c)=1$,
 the polynomials $p(y)$ and $q(y)$ are both primitive.
 When $\ell=2$, the condition is obvious for all $a>0$ and $c\in\{\pm 1,\pm 3,\pm 5\}$
 because $p(0)\equiv q(0)\equiv 1\!\pmod{2}$.
 The values of $p(y)$ at $y=0,1,2$ are congruent modulo $\ell=3$ to
 $ca^2+(c+1)a+c$, $(c+2)a^2+(c+2)a+c$, $(c+2)a^2+(c+2)a+c$, respectively. 
 Also, we have 
 $ca^2+a+1$, $(c+2)a^2+1$, $ca^2+2a+1$ for the values $q(y)$ at $y=0,1,2$, respectively. 
 Thus, except for the case $(a,c)\equiv (0,0), (2,0)\!\pmod{3}$, $p(y)q(y)$ is not congruent to the zero polynomial
 modulo $3$ for all $a>0$ and $c\in\{\pm 1,\pm 3,\pm 5\}$.

 Summing up the above discussion, 
 the polynomials $p(y)$ and $q(y)$ satisfy the condition $(\mathrm{iv})$
 if $a\not\equiv 0\!\pmod{5}$ when $c=\pm 5$, $a\equiv 1\!\pmod{3}$ when $c=\pm 3$ and for all $a$ when $c=\pm 1$.
 Therefore, solving the congruences $a\not\equiv 0\!\pmod{5}$ and $a\equiv 1\!\pmod{3}$,
 we obtain the second assertion.
\end{proof}

\begin{proof}
[Proof of Theorem~\ref{thm:RamanujanHL}]
 Since each of the six polynomial $f_c(x)$ with $c\in\{\pm 1,\pm 3,\pm 5\}$ satisfies
 the four conditions in Conjecture~\ref{conj:HLC},
 together with Theorem~\ref{thm:exceptionalI},
 one obtains the assertion for exceptionals of type $(\I)$.
 
 Now, let $\xi_1=x_1^2=2.0451\ldots,\xi_2=x_2^2=3.9365\ldots$ be the constants obtained in Theorem~\ref{thm:pqinJallc}.
 Take $a\in\mathbb{Z}_{>0}$ satisfying $a\equiv 1,4,7,13\!\pmod{15}$ and $(2-\frac{2}{2a+1})^2<\xi_1$, that is, $a=1$. 
 Then, under Conjecture~\ref{conj:HLC},
 the corresponding $p(y)$ and $q(y)$ in Lemma~\ref{lem:pq} represent infinitely many primes at the same time.
 Moreover, if both $p(y)$ and $q(y)$ are prime, then $m=p(y)q(y)=f_c(k(y))\in J$ and,
% since $\lim_{y\to\infty}\frac{q}{p}=\frac{16}{9}=1.7777\ldots<\xi_1$ 
% where $\xi_1$ is the constant obtained in
 form Theorem~\ref{thm:pqinJallc}, it is exceptional.
% Therefore, there are infinitely many exceptional $m=pq\in J$ and hence
 This shows the assertion for exceptionals of type $(\II)$.
 Furthermore, if we take $a\in\mathbb{Z}_{>0}$ satisfying $a\equiv 1,4,7,13\!\pmod{15}$ and $(2-\frac{2}{2a+1})^2>\xi_2$
 (notice that the smallest such $a$ is $64$),
 under Conjecture~\ref{conj:HLC}, from Theorem~\ref{thm:pqinJallc} again,
 one similarly proves the assertion for ordinaries of type $(\II)$.
 This completes the proof.
\end{proof}

\begin{example}
 Consider the case where $a=1$ and $c=-5$, that is, 
\begin{align*}
 p=9y^2-39y-59, \quad q=16y^2-72y-99.
\end{align*}
 Then, as we have seen above, $m=pq$ is exceptional if both $p$ and $q$ are prime for sufficiently large $y\gg 0$.
% we have $\hat{\mu}_{l_0+2}\le \mathrm{RB}_{l_0+2}$ with $m=pq$ for $y\gg 0$.
 Notice that, since $1<(2-\frac{2}{2a+1})=1.3333\ldots<\gamma_1=1.3843\ldots$
 where $\gamma_1$ is defined in Remark~\ref{rem:spectralordering},
 the inequality $\mu^{(1)}<\mu^{(2)}<\mu^{(0)}=\hat{\mu}<\RB$ holds for such $m$.
 The first few of such $p$ and $q$ are given in Table~4.

\begin{table}[htbp]
\begin{center}
{\footnotesize 
{\renewcommand\arraystretch{1.2}
\begin{tabular}{c||c|c|c||c|c|c}
  $y$ & $p$ & $q$ & $\frac{q}{p}$ & $\mu^{(0)}-\RB$ & $\mu^{(1)}-\RB$ & $\mu^{(2)}-\RB$ \\
\hline
\hline
 $7$ & $109$ & $181$ & $1.660\ldots$ &
 $-1.11\times 10^{-2}$ & $-8.21\times 10^{-2}$ & $-2.17\times 10^{-2}$ \\
\hline
 $17$ & $1879$ & $3301$ & $1.756\ldots$ & 
 $-7.58\times 10^{-4}$ & $-4.86\times 10^{-3}$ & $-1.09\times 10^{-3}$ \\
\hline
 $25$ & $4591$ & $8101$ & $1.764\ldots$ &
 $-3.11\times 10^{-4}$ & $-1.98\times 10^{-3}$ & $-4.42\times 10^{-4}$ \\
\hline
 $35$ & $9601$ & $16981$ & $1.768\ldots$ & 
 $-1.49\times 10^{-4}$ & $-9.50\times 10^{-4}$ & $-2.09\times 10^{-4}$ \\
\hline
 $40$ & $12781$ & $22621$ & $1.768\ldots$ &
 $-1.12\times 10^{-4}$ & $-7.13\times 10^{-4}$ & $-1.57\times 10^{-4}$ \\
\hline
 $62$ & $32119$ & $56941$ & $1.772\ldots$ &
 $-4.46\times 10^{-5}$ & $-2.83\times 10^{-4}$ & $-6.20\times 10^{-5}$ \\
\hline
 $82$ & $57259$ & $101581$ & $1.774\ldots$ &
 $-2.50\times 10^{-5}$ & $-1.59\times 10^{-4}$ & $-3.47\times 10^{-5}$ \\
\hline
 $104$ & $93229$ & $165469$ & $1.774\ldots$ &
 $-1.53\times 10^{-5}$ & $-9.77\times 10^{-5}$ & $-2.12\times 10^{-5}$ % \\
%\hline
% $134$ & $156319$ & $277549$ & $1.332$ &
% $-9.16\times 10^{-6}$ & $-5.83\times 10^{-5}$ & $-1.26\times 10^{-5}$ \\
%\hline
% $139$ & $168409$ & $299029$ & $1.332$ &
% $-8.50\times 10^{-6}$ & $-5.41\times 10^{-5}$ & $-1.17\times 10^{-5}$
\end{tabular}
}
}
 \caption{Differences between $\mu^{(i)}$ and $\RB$ for $m=pq$ with $a=1$, $c=-5$.}
\end{center}
\end{table}

 On the other hand, if we replace $c$ with $-7$, that is, 
\begin{align*}
 p=9y^2-39y-77, \quad q=16y^2-72y-131.
\end{align*}
 Then, $m=pq\notin J$ and hence $m$ is ordinary from Theorem~\ref{thm:criterionSl0+2}.
 Actually, as one finds from Table~5, the inequality $\RB<\mu^{(0)}=\hat{\mu}$ holds.

\begin{table}[htbp]
\begin{center}
{\footnotesize
{\renewcommand\arraystretch{1.2}
\begin{tabular}{c||c|c|c||c|c|c}
  $y$ & $p$ & $q$ & $\frac{q}{p}$ & $\mu^{(0)}-\RB$ & $\mu^{(1)}-\RB$ & $\mu^{(2)}-\RB$ \\
\hline
\hline
 $13$ & $937$ & $1637$ & $1.747\ldots$ &
 $1.07\times 10^{-4}$ & $-8.13\times 10^{-3}$ & $-6.21\times 10^{-4}$ \\
\hline
 $43$ & $14887$ & $26357$ & $1.770\ldots$ &
 $4.70\times 10^{-6}$ & $-5.11\times 10^{-4}$ & $-3.36\times 10^{-5}$ \\
\hline
 $60$ & $29983$ & $53149$ & $1.772\ldots$ &
 $2.30\times 10^{-6}$ & $-2.54\times 10^{-4}$ & $-1.64\times 10^{-5}$ \\
\hline
 $81$ & $55813$ & $99013$ & $1.774\ldots$ &
 $1.22\times 10^{-6}$ & $-1.36\times 10^{-4}$ & $-8.73\times 10^{-6}$ \\
\hline
 $158$ & $218437$ & $387917$ & $1.775\ldots$ &
 $3.11\times 10^{-7}$ & $-3.48\times 10^{-5}$ & $-2.19\times 10^{-6}$ \\
\hline
 $211$ & $392383$ & $697013$ & $1.776\ldots$ &
 $1.73\times 10^{-7}$ & $-1.93\times 10^{-5}$ & $-1.21\times 10^{-6}$ \\
\hline
 $225$ & $446773$ & $793669$ & $1.776\ldots$ &
 $1.52\times 10^{-7}$ & $-1.70\times 10^{-5}$ & $-1.06\times 10^{-6}$ \\
\hline
 $249$ & $548221$ & $973957$ & $1.776\ldots$ &
 $1.23\times 10^{-7}$ & $-1.38\times 10^{-5}$ & $-8.69\times 10^{-7}$ % \\
%\hline
% $270$ & $645493$ & $1146829$ & $1.332$ &
% $1.05\times 10^{-7}$ & $-1.17\times 10^{-5}$ & $-7.38\times 10^{-7}$, \\
%\hline
% $288$ & $735187$ & $1306237$ & $1.332$ &
% $9.24\times 10^{-8}$ & $-1.03\times 10^{-5}$ & $-6.47\times 10^{-7}$ 
\end{tabular}
}
}
\end{center}
 \caption{Differences between $\mu^{(i)}$ and $\RB$ for $m=pq$ with $a=1$, $c=-7$.}
\end{table}
\end{example}

\begin{example}
 Consider the case where $a=64$ and $c=5$, that is,  
\begin{align*}
 p=68161536y^2-4268352y+46021, \quad q=268435456y^2-16809984y+181249.
\end{align*}
 In this case, $m=pq$ is ordinary if both $p$ and $q$ are prime for sufficiently large $y\gg 0$.
 Notice that, since $\gamma_5(5)=1.9839\ldots<(2-\frac{2}{2a+1})=1.9845\ldots<2$ where $\gamma_5(5)$ is also defined in
 Remark~\ref{rem:spectralordering}, the inequality $\mu^{(2)}<\mu^{(0)}<\RB<\mu^{(1)}=\hat{\mu}$ holds for such $m$.
 See Table~6.

\begin{table}[htbp]
\begin{center}
{\footnotesize  
{\renewcommand\arraystretch{1.2}
\begin{tabular}{c||c|c|c||c|c|c}
  $y$ & $p$ & $q$ & $\frac{q}{p}$ & $\mu^{(0)}-\RB$ & $\mu^{(1)}-\RB$ & $\mu^{(2)}-\RB$ \\
\hline
\hline
$39$  & $103507276549$  & $407634920449$ & $3.938\cdots$ & $-5.79\times 10^{-11}$ & $2.17\times 10^{-13}$ & $-6.61\times 10^{-11}$ \\
\hline
$134$ & $1223336627269$ & $4817774691329$ & $3.938\cdots$ & $-4.90\times 10^{-12}$ & $1.84\times 10^{-14}$ & $-5.59\times 10^{-12}$ \\
\hline
$165$ & $1854993585541$ & $7305381823489$ & $3.938\cdots$ & $-3.23\times 10^{-12}$ & $1.21\times 10^{-14}$ & $-3.69\times 10^{-12}$ \\
\hline
$178$ & $2158870385989$ & $8502116992001$ & $3.938\cdots$ & $-2.77\times 10^{-12}$ & $1.04\times 10^{-14}$ & $-3.17\times 10^{-12}$ \\
\hline
$279$ & $5304571299589$ & $20890594526209$ & $3.938\cdots$ & $-1.13\times 10^{-12}$ & $4.25\times 10^{-15}$ & $-1.29\times 10^{-12}$ \\
\hline
$433$ & $12777690072709$ & $50321416668161$ & $3.938\cdots$ & $-4.69\times 10^{-13}$ & $1.76\times 10^{-15}$ & $-5.35\times 10^{-13}$ \\
\hline
$468$ & $14927014718149$ & $58785940423681$ & $3.938\cdots$ & $-4.02\times 10^{-13}$ & $1.51\times 10^{-15}$ & $-4.58\times 10^{-13}$ \\
\hline
$499$ & $16970160763909$ & $66832308978689$ & $3.938\cdots$ & $-3.53\times 10^{-13}$ & $1.32\times 10^{-15}$ & $-4.03\times 10^{-13}$ 
%\hline
% $611$ & $169695342289321$ & $672238936050421$ & $1.990$ &
% $-5.65\times 10^{-15}$ & $3.10\times 10^{-14 }$ & $-1.05\times 10^{-14 }$  \\
%\hline
% $874$ & $347231154423409$ & $1375537470211501$ & $1.990$ &
% $-2.76\times 10^{-15}$ & $1.51\times 10^{-14}$ & $-5.18\times 10^{-15}$ 
\end{tabular}
}
}
\end{center}
 \caption{Differences between $\mu^{(i)}$ and $\RB$ for $m=pq$ with $a=64$, $c=5$.}
\end{table}
\end{example}

\begin{remark}
% We here remark that, even if we can prove the existence of exceptional $m\in J$,
% one can not easily conclude that there exists infinitely many such $m$'s.
 Let us denote the fractional part of a real number $x$ by $\{x\}$.
% see this in the case of $(\mathrm{i})$, that is, $m=p$.
% In this case, we have $l_0=2\Gauss{\sqrt{p}-\frac{3}{2}}+1$.
 If the sequence  $\{\{\sqrt{p}-\frac{3}{2}\}\}_{p\,:\,\text{prime}}$ is included in a closed interval, 
 then one easily sees that there can not be infinitely many exceptional primes.
 In this sense, this phenomena on the existence of exceptional primes is also related to $\{\sqrt{p}\}$
 which distributes uniformly in the interval $[0,1)$ ({\it cf}. \cite{{Dieter1975,{Dieter1976}}}).
% Remark that, since $l_0(p)=2\Gauss{\sqrt{p}-\frac{3}{2}}+1$,
% this phenomena is deeply related to the distribution of the fractional part of $\sqrt{p}$.
% Actually, in \cite{{Dieter1975,{Dieter1976}}},
% it was shown that 
% the sequence $\{\{\sqrt{p}\}\}_{p\,:\,\text{prime}}$ where $\{x\}$ denotes the
% fractional part of $x$ is uniformly distributed modulo one.
% Hence, one can not easily conclude that there exists only finitely many exceptional $p$.
% In fact, we expect that the following holds;
\end{remark}

\section{Numerical consideration}
 
 Let $\rho_{E}(x)$ be the number of exceptionals $m\le x$.
 It is now natural to ask how $\rho_{E}(x)$ behaves as $x$ tends to infinity.
 The aim of this section is to consider this question
 by giving some conjectures which are obtained by numerical studies.
 Notice that to investigate $\rho_E(x)$ it is enough to know $\pi_{E}(c;x)$ for $c\in\{\pm 1,\pm 3,\pm 5\}$
 where $\pi_{E}(c;x)$ is the number of $k\le x$ such that $f_c(k)=k^2+5k+c$ is exceptional, 
 since $\rho_E(x)\sim \sum_{c\in \{\pm 1,\pm 3,\pm 5\}}\pi_E(c;\sqrt{x})$ from Theorem~\ref{thm:criterionSl0+2}.
 Moreover,
 it is sufficient to investigate $\pi^{(\I)}_{E}(c;x)$ and $\pi^{(\II)}_{E}(c;x)$,
 the number of $k\le x$ such that $f_c(k)$ is exceptional of type $(\I)$ and $(\II)$, respectively,
 because of the identity
\[
 \pi_{E}(c;x)\sim \pi^{(\I)}_{E}(c;x)+\pi^{(\II)}_{E}(c;x),
\]
 which is immediate from Proposition~\ref{thm:primeinJ}. 

% where $\pi^{(\I)}_{E}(x;c)$ and $\pi^{(\II)}_{E}(x;c)$ are respectively 
% the number of $k\le x$ such that $f_c(k)$ is exceptional of type $(\I)$ and $(\II)$, 
% we  $\pi^{(\I)}_{E}(x;c)$ and $\pi^{(\II)}_{E}(x;c)$. 
%In this section, we numerically and heuristically study the asymptotic distribution of
% these functions as $x\to\infty$ and give several conjectures.

\subsection{Distribution of exceptionals of type ($\boldsymbol{\I}$)}

 From Theorem~\ref{thm:exceptionalI},
 we have $\pi^{(\I)}_{E}(c;x)=\pi(f_c;x)$,
 where $\pi(f;x)$ with $f\in\mathbb{Z}[x]$ is defined in Conjecture~\ref{conj:HLC}.
 Hence, from the conjecture of Hardy-Littlewood and Bateman-Horn,
 the asymptotic behavior of $\pi^{(\I)}_{E}(c;x)$ is expected as follows.

\begin{conj}
\label{conj:EXPECTEDexceptionalp}
 It holds that 
\begin{equation*}
%\label{for:EXPECTEDexceptionalp}
 \pi^{(\I)}_{E}(c;x)\sim C^{(\I)}(c)\frac{x}{\log{x}},
\end{equation*}
 where $C^{(\I)}(c)$  
\begin{equation*}
%\label{for:HLconstant}
 C^{(\I)}(c)=\frac{C(f_c)}{2}
=\prod_{p\ge 3}\Bigl(1-\frac{\bigl(\frac{c'}{p}\bigl)}{p-1}\Bigr)
=
\begin{cases}
 1.18219\ldots & (c=-5),\\
 1.18219\ldots & (c=-3),\\
 1.12674\ldots & (c=-1),\\
 0.927881\ldots & (c=1),\\
 0.807233\ldots & (c=3),\\
 1.77328\ldots & (c=5),
\end{cases}
\end{equation*}
 with $c'=25-4c$.
\end{conj}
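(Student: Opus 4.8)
The plan is to obtain the stated asymptotic as a direct specialization of the Hardy--Littlewood and Bateman--Horn conjecture (Conjecture~\ref{conj:HLC}) to the single quadratic $f_c(x)=x^2+5x+c$. By Theorem~\ref{thm:exceptionalI} every odd prime of the form $p=f_c(k)$ lying in $J$ is automatically exceptional of type $(\I)$, and conversely an exceptional of type $(\I)$ represented by $f_c$ is by definition such a prime; hence $\pi^{(\I)}_{E}(c;x)=\pi(f_c;x)$ (the finitely many small $k$ with $f_c(k)\notin J$ being irrelevant for the asymptotic), where $\pi(f_c;x)=\#\{k\le x\mid f_c(k)\ \text{is prime}\}$ is exactly the counting function of Conjecture~\ref{conj:HLC} with $r=1$ and $f_1=f_c$. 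So the whole task reduces to verifying the four hypotheses of that conjecture for $f_c$ and then simplifying the Hardy--Littlewood constant.

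First I would check hypotheses (i)--(iv). Condition (i) is vacuous since $r=1$, and (iii) holds because $f_c$ is monic. For (ii): the discriminant of $f_c$ is $c'=25-4c$, equal to $5,13,21,29,37,45$ as $c$ runs over $\{\pm1,\pm3,\pm5\}$, none of which is a perfect square, so $f_c$ is irreducible over $\bZ$. For (iv): a monic polynomial of degree $2$ is nonzero modulo every prime and so has at most two roots modulo any prime $\ell\ge3$, hence cannot be divisible by such an $\ell$ at every integer; the only prime to exclude is $\ell=2$, and this holds since $f_c(0)=c$ is odd.

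Next I would apply Conjecture~\ref{conj:HLC} with $r=1$, $\deg f_c=2$, which gives $\pi(f_c;x)\sim\frac12 C(f_c)\frac{x}{\log x}$, and then compute the local factors of $C(f_c)=\prod_p\bigl(1-\frac{\nu_{f_c}(p)}{p}\bigr)\bigl(1-\frac1p\bigr)^{-1}$. At $p=2$ one has $\nu_{f_c}(2)=0$ because $f_c(0)$ and $f_c(1)$ are both odd, so the $2$-factor is $(1-\frac12)^{-1}=2$. For an odd prime $p$, completing the square gives $f_c(x)\equiv(x+5\cdot 2^{-1})^2-c'\cdot 4^{-1}\pmod p$, whence $\nu_{f_c}(p)=1+\bigl(\frac{c'}{p}\bigr)$ and the $p$-factor is
\[
\Bigl(1-\frac{1+\bigl(\frac{c'}{p}\bigr)}{p}\Bigr)\Bigl(1-\frac1p\Bigr)^{-1}=1-\frac{\bigl(\frac{c'}{p}\bigr)}{p-1}.
\]
Multiplying these contributions yields $C(f_c)=2\prod_{p\ge3}\bigl(1-\frac{1}{p-1}\bigl(\frac{c'}{p}\bigr)\bigr)$, so $C^{(\I)}(c)=C(f_c)/2$ equals the displayed Euler product, and the asymptotic follows.

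Finally, to justify the numerical values I would rewrite this (only conditionally convergent) product in absolutely convergent form: with $\chi(p)=\bigl(\frac{c'}{p}\bigr)$, which by quadratic reciprocity is a fixed non-principal real Dirichlet character in $p$, the identity $1-\frac{\chi(p)}{p-1}=\bigl(1-\frac{\chi(p)}{p}\bigr)\bigl(1-\frac{\chi(p)}{(p-1)(p-\chi(p))}\bigr)$ pulls out $\prod_{p\ge3}\bigl(1-\frac{\chi(p)}{p}\bigr)$, evaluated through the value at $s=1$ of the associated Dirichlet $L$-function (the finitely many ramified primes, i.e.\ those dividing $c'$, being handled by hand), while the remaining product converges absolutely and is summed numerically. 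The real obstacle here is not any of this bookkeeping but the fact that Conjecture~\ref{conj:HLC} is unproved -- no quadratic instance of it (such as $x^2+1$) is currently known -- which is precisely why Conjecture~\ref{conj:EXPECTEDexceptionalp} must be stated conditionally; the argument above only exhibits it as a clean consequence of that conjecture.
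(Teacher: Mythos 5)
Your proposal is correct and follows essentially the same route as the paper: identify $\pi^{(\I)}_{E}(c;x)$ with $\pi(f_c;x)$ via Theorem~\ref{thm:exceptionalI}, verify the four Bateman--Horn hypotheses for $f_c$ (the paper does this in the proof of Theorem~\ref{thm:RamanujanHL}), and specialize Conjecture~\ref{conj:HLC} with $r=1$, $\deg f_c=2$, computing $\nu_{f_c}(2)=0$ and $\nu_{f_c}(p)=1+\bigl(\tfrac{c'}{p}\bigr)$ to obtain $C^{(\I)}(c)=C(f_c)/2=\prod_{p\ge 3}\bigl(1-\tfrac{1}{p-1}\bigl(\tfrac{c'}{p}\bigr)\bigr)$. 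You supply the local-factor bookkeeping and the absolutely convergent rearrangement that the paper leaves implicit, but the substance is identical.
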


\subsection{Distribution of exceptionals of type $(\boldsymbol{\II})$}
\label{sec:etype2}

 For $a>1$, let $P_2(a)$ be the set of all $pq$ where $p$ and $q$ are distinct primes satisfying $p<q<ap$. 
 Moreover, for a polynomial $f\in\mathbb{Z}[x]$, let $\pi_2(f,a;x)$ be the number of $k\le x$ such that $f(k)\in P_2(a)$.
 From Theorem~\ref{thm:pqinJallc} and the observation in Remark~\ref{rem:spectralordering},
 one may expect that $\pi^{(\II)}_E(c;x)$ asymptotically behaves as $\pi_2\bigl(f_c,\gamma_5(c)^2;x\bigr)$,
 where $\gamma_5(c)$ is a constant also defined in Remark~\ref{rem:spectralordering}.
 We here notice that Conjecture~\ref{conj:HLC} with $r=1$
 asserts that $\pi(f;x)\asymp \pi(x)$ for any $f\in\mathbb{Z}[x]$ satisfying the conditions in Conjecture~\ref{conj:HLC},
 where $\pi(x)\sim\frac{x}{\log{x}}$ is the number of primes $p\le x$. 
 Based on this observation, we may expect the same situation for $\pi_2(f,a;x)$, that is, $\pi_2(f,a;x)\asymp \pi_2(a;x)$
 where $\pi_2(a;x)$ is the number of $m\le x$ such that $m\in P_2(a)$.
 For $\pi_2(a;x)$, we can say the following (for more precise discussion, see \cite{DeckerMoree2008,Hashimoto2009}).

\begin{lem}
 It holds that   
\begin{equation*}
%\label{for:EXPECTEDpi^2}
 \pi_2(a;x)\asymp \frac{x}{(\log{x})^2}.
\end{equation*}
\end{lem}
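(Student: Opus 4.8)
The plan is to reduce the lemma to the prime number theorem (or Chebyshev's estimate) together with Mertens' theorem, by counting the underlying prime pairs directly. By unique factorization, $\pi_2(a;x)$ equals the number of pairs of primes $(p,q)$ with $p<q<ap$ and $pq\le x$; since $p<q$ forces $p<\sqrt x$, we have
\[
 \pi_2(a;x)=\sum_{p<\sqrt x}\#\bigl\{q\ \text{prime}\,\bigl|\,p<q<\min(ap,\,x/p)\bigr\}.
\]
The two bounds in the minimum exchange roles at $p=\sqrt{x/a}$ (because $ap\le x/p\iff p\le\sqrt{x/a}$): for $p\le\sqrt{x/a}$ the inner set consists of the primes in $(p,ap)$, while for $\sqrt{x/a}<p<\sqrt x$ it consists of the primes in $(p,x/p)$. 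First I would treat these two ranges of $p$ separately.

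For the upper bound I would use $\pi(t)\ll t/\log t$. In the first range the inner count is at most $\pi(ap)\ll_a p/\log p$, and the crude summation $\sum_{p\le y}p/\log p\ll y^2/(\log y)^2$ (split the sum at $\sqrt y$) with $y=\sqrt{x/a}$ gives a total $\ll_a x/(\log x)^2$. In the second range the inner count is at most $\pi(x/p)\ll (x/p)/\log(x/p)\ll x/(p\log x)$, using $x/p\ge\sqrt x$; summing this against $p$ requires $\sum_{\sqrt{x/a}<p<\sqrt x}1/p$, and here Mertens' theorem, $\sum_{p\le z}1/p=\log\log z+M+o(1)$, yields $\log\log\sqrt x-\log\log\sqrt{x/a}+o(1)\ll\log a/\log x$. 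Hence the second range also contributes $\ll_a x/(\log x)^2$, and adding the two gives $\pi_2(a;x)\ll_a x/(\log x)^2$.

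For the matching lower bound it suffices to keep only the primes $p$ in the dyadic window $\bigl[\tfrac12\sqrt{x/a},\,\sqrt{x/a}\bigr]$. For such $p$ one has $ap^2\le x$, so every prime $q\in(p,ap)$ gives an admissible product $pq<ap^2\le x$ with $p<q<ap$; by the prime number theorem the number of such $q$ is $\sim(a-1)p/\log p\gg_a\sqrt x/\log x$ once $x$ is large, and the number of primes $p$ in the window is likewise $\gg_a\sqrt x/\log x$. Multiplying the two counts yields $\pi_2(a;x)\gg_a x/(\log x)^2$, which together with the upper bound is the claimed relation $\pi_2(a;x)\asymp x/(\log x)^2$.

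I do not expect a genuine obstacle here; the only step requiring a little care is the upper-bound contribution of the primes $p$ close to $\sqrt x$, where the individual count $\pi(x/p)$ is already of the correct order $\sqrt x/\log x$, and one must check that summing it against $\sum 1/p$ over the short interval $(\sqrt{x/a},\sqrt x)$ does not cost a factor $\log x$ — this is exactly what Mertens' theorem secures, producing a saving of size $O(\log a/\log x)$. A sharper treatment, keeping track of the implied constants (all depending on $a$) and of the refinement in which the pairs are weighted by the position of $q/p$, is carried out in the cited works of Decker--Moree and Hashimoto.
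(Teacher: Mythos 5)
Your argument is correct and follows essentially the same route as the paper: split the sum over $p$ at $\sqrt{x/a}$, estimate the inner prime counts in $(p,ap)$ and $(p,x/p]$ by the prime number theorem, and control the short sum $\sum 1/p$ over $(\sqrt{x/a},\sqrt{x}]$ by Mertens' theorem --- note only that you need the quantitative form $\sum_{p\le z}1/p=\log\log z+M+O(1/\log z)$, since a bare $o(1)$ error as you wrote it would not justify the bound $\ll_a 1/\log x$ for that short sum. Your lower bound, which keeps only a dyadic window of $p$ near $\sqrt{x/a}$, is a mild simplification of the paper's treatment, which lower-bounds both ranges explicitly and must verify that the resulting leading constant is positive.
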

\begin{proof}
 Fix any prime number $p_0$.
 For $x\ge ap_0^2$, by the prime number theorem, we have
\begin{align*}
 \pi^2(x;a)
&=\sum_{p\le \frac{\sqrt{x}}{\sqrt{a}}}\sum_{p<q<ap}1
+\sum_{\frac{\sqrt{x}}{\sqrt{a}}<p\le \sqrt{x}}\sum_{p<q\le \frac{x}{p}}1\\
%&=\sum_{p\le \frac{\sqrt{x}}{\sqrt{a}}}\Bigl(\pi(ap)-\pi(p)\Bigr)
%+\sum_{\frac{\sqrt{x}}{\sqrt{a}}<p\le \sqrt{x}}\Bigl(\pi\bigl(\frac{x}{p}\bigr)-\pi(p)\Bigr)\\
&=\sum_{p\le \frac{\sqrt{x}}{\sqrt{a}}}\Bigl(\frac{ap}{\log{ap}}-\frac{p}{\log{p}}+O(1)\Bigr)
+\sum_{\frac{\sqrt{x}}{\sqrt{a}}<p\le
 \sqrt{x}}\Bigl(\frac{\frac{x}{p}}{\log{\frac{x}{p}}}-\frac{p}{\log{p}}+O(1)\Bigr)\\
&=\sum_{p_0\le p\le \frac{\sqrt{x}}{\sqrt{a}}}\Bigl(\frac{ap}{\log{ap}}-\frac{p}{\log{p}}\Bigr)
+\sum_{\frac{\sqrt{x}}{\sqrt{a}}<p\le \sqrt{x}}\Bigl(\frac{\frac{x}{p}}{\log{\frac{x}{p}}}-\frac{p}{\log{p}}\Bigr)+O\Bigl(\frac{\sqrt{x}}{\log{x}}\Bigr).
\end{align*}
 Let us write the first and the second sums of the rightmost hand side as $A$ and $B$, respectively.

 Since $x\ge ap_0^2$, we have $\frac{a}{c(a)+1}\frac{p}{\log{p}}<\frac{ap}{\log{ap}}<a\frac{p}{\log{p}}$
 where $c(a)=\frac{\log{a}}{\log{p_0}}$.
 This shows that
\begin{equation}
\label{for:AAA}
 \frac{2}{a}\Bigl(\frac{a}{c(a)+1}-1\Bigr)\frac{x}{(\log{x})^2}\ll A\ll \frac{2}{a}(a-1)\frac{x}{(\log{x})^2}.
\end{equation}
 Here, we have used the formula 
\begin{equation}
\label{for:plogp}
 \sum_{p\le x}\frac{p}{\log{p}}\sim \frac{1}{2}\Bigl(\frac{x}{\log{x}}\Bigr)^2,
\end{equation}
 which follows from the Abel summation formula with the fact that
 there exists a constant $c>0$ such that 
 $\vartheta(x)=\sum_{p\le x}\log{p}=x+O(x\exp(-c\sqrt{\log{x}}))$ 
 (see, e.g., \cite{MontgomeryVaughan2007}).

 On the other hand, we have  
\begin{align*}
 \log{a}\frac{x}{(\log{x})^2}\sim \frac{x}{\log{x}}\sum_{\frac{\sqrt{x}}{\sqrt{a}}<p\le \sqrt{x}}\frac{1}{p}< \sum_{\frac{\sqrt{x}}{\sqrt{a}}<p\le \sqrt{x}}\frac{\frac{x}{p}}{\log{\frac{x}{p}}}
< \frac{2x}{\log{x}}\sum_{\frac{\sqrt{x}}{\sqrt{a}}<p\le \sqrt{x}}\frac{1}{p}\sim 2\log{a}\frac{x}{(\log{x})^2},
\end{align*}
 where we have used the fact that there exists constants $b$ and $c'>0$ such that 
 $\sum_{p\le x}\frac{1}{p}=\log{\log{x}}+b+O(\exp(-c'\sqrt{\log{x}}))$ (see \cite{MontgomeryVaughan2007} again).
 This together with \eqref{for:plogp} implies
\begin{equation}
\label{for:BBB}
 \Bigl(\log{a}-2+\frac{2}{a}\Bigr)\frac{x}{(\log{x})^2}\ll B\ll \Bigl(2\log{a}-2+\frac{2}{a}\Bigr)\frac{x}{(\log{x})^2}.
\end{equation}

 Combining \eqref{for:AAA} and \eqref{for:BBB}, we have 
\[
 \Bigl(\frac{2\log{p_0}}{\log{p_0}+\log{a}}+\log{a-2}\Bigr)\frac{x}{(\log{x})^2}\ll A+B\ll 2\log{a}\frac{x}{(\log{x})^2}.
\]
 Notice that if we take $p_0\ge 11$, then the coefficients of the leftmost hand side is positive for all $a>1$.
 This completes the proof.
\end{proof}

  These observations lead us to expect the following.

\begin{conj}
\label{conj:EXPECTEDexceptionalpq}
 There exists a constant $C^{(\II)}(c)$ such that  
\begin{equation*}
%\label{for:EXPECTEDexceptionalpq}
 \pi^{(\II)}_{E}(c;x)\sim C^{(\II)}(c)\frac{x}{(\log{x})^2}.
\end{equation*}
\end{conj}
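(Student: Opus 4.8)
The plan is to recast $\pi^{(\II)}_E(c;x)$ as a Hardy--Littlewood type count of values of a quadratic polynomial that are products of two primes with a prescribed ratio, and then to invoke a quantitative --- and necessarily uniform --- form of Conjecture~\ref{conj:HLC}. \emph{Step 1 (spectral reduction).} For $m=f_c(k)=pq$ of type $(\II)$ with $k$ large, Lemma~\ref{lem:candidatepq} gives $\hat{\mu}=\max\{\mu^{(0)},\mu^{(1)},\mu^{(2)}\}$; by Lemma~\ref{lem:m02} both $\mu^{(0)}$ and $\mu^{(2)}$ remain below $\RB$, so, in view of Remark~\ref{rem:spectralordering}, $m$ is exceptional exactly when $\mu^{(1)}\le\RB$. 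Writing $x=\sqrt{q/p}$ and using the function $X(x;k,c)$ from the proof of Lemma~\ref{lem:m1}, this is the condition $q/p<\beta_c(k)$ with $\beta_c(k)\to\gamma_5(c)^2$ as $k\to\infty$. Since $1<\gamma_5(c)^2<4$ for every $c\in\{\pm 1,\pm 3,\pm 5\}$ (Remark~\ref{rem:spectralordering}), a routine Selberg--sieve upper bound shows that replacing the moving threshold $\beta_c(k)$ by the constant $\gamma_5(c)^2$ changes the count by $o\bigl(x/(\log x)^2\bigr)$. Setting $u=2k+5$, so that $f_c(k)=\tfrac{1}{4}(u^2-c')$ with $c'=25-4c$, we are reduced to estimating
\[
 \#\Bigl\{\text{odd }u\le 2x+5\ \Bigm|\ \tfrac{1}{4}(u^2-c')=pq,\ p<q\text{ primes},\ q/p<\gamma_5(c)^2\Bigr\}.
\]
This is a ``$P_2$-values of a quadratic polynomial, with a prescribed prime ratio'' problem --- the common refinement of Conjecture~\ref{conj:HLC} with $r=1$ and of the balanced-semiprime count $\pi_2(a;x)$ analysed above --- and it lies exactly in the regime of Iwaniec's theorem on almost-primes represented by quadratic polynomials, though here we need the full asymptotic with the ratio restriction.

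\emph{Step 2 (setting up the sieve).} The equation $\tfrac{1}{4}(u^2-c')=pq$ is the affine conic $u^2-4pq=c'$. For a fixed prime $p$ with $\bigl(\tfrac{c'}{p}\bigr)=1$, the conditions ``$u$ odd'' and ``$u^2\equiv c'\pmod p$'' single out four residue classes of $u$ modulo $4p$; since $\gamma_5(c)<2$, the admissible $u$ lie in the single window $2p<u<2\gamma_5(c)p\subset(0,4p)$, so for each such $p$ there are $O(1)$ admissible $u$ --- on average $\kappa_c=2(\gamma_5(c)-1)>0$ of them, up to the proportion of those residue classes meeting the window --- and then $q=\tfrac{1}{4}(u^2-c')/p$ is a completely explicit function of $p$ and a chosen square root of $c'$. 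Hence
\[
 \pi^{(\II)}_E(c;x)=\sum_{p}\ \nu_c(p;x)\,\mathbf{1}\bigl[\tfrac{1}{4}(u^2-c')/p\text{ is prime}\bigr]+o\!\left(\tfrac{x}{(\log x)^2}\right),
\]
where $\nu_c(p;x)$ is bounded, vanishes unless $\bigl(\tfrac{c'}{p}\bigr)=1$, and confines $p$ to a range of length $\asymp x$. Thus $\pi^{(\II)}_E(c;x)$ is literally a count of primes $p$ for which an explicit companion $q(p)\asymp x$ is again prime: a two-variable Bateman--Horn problem on the quadric $u^2-4nm=c'$.

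\emph{Step 3 (the predicted main term).} I would then compute the expected density by the standard heuristic. For each prime $\ell$ the local factor is the proportion of residue pairs $(n,m)\bmod\ell$ for which $u^2\equiv 4nm+c'$ is solvable and $\ell\nmid nm$, divided by the independence factor $(1-1/\ell)^{-2}$; multiplying the resulting absolutely convergent Euler product by an archimedean factor recording $\kappa_c$ and the cut-off $q/p=\gamma_5(c)^2$ yields a constant $C^{(\II)}(c)>0$. Summing over the primes $p$ --- which by the prime number theorem supplies one factor $x/\log x$ --- while inserting the conjectured companion primality density $1/\log q\sim 1/\log x$ gives $\pi^{(\II)}_E(c;x)\sim C^{(\II)}(c)\,x/(\log x)^2$. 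The type $(\III)$ values $m=25,49$ and the finitely many $p\mid c'$ contribute only $O(1)$ and are harmless.

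\emph{The main obstacle.} The crux is uniformity: the companion form $q(p)$ varies with $p$, so Step~3 is \emph{not} a consequence of Conjecture~\ref{conj:HLC} as stated --- it requires a version uniform over the family of linear-in-$p$ forms arising from $u^2-4pq=c'$, equivalently a Hardy--Littlewood prime-pair conjecture for the correlation $\Lambda(n)\Lambda(m)$ restricted to that quadric. One cannot evade this by reduction to Conjecture~\ref{conj:HLC}: a single rational family of the kind produced in Lemma~\ref{lem:pq} is far too sparse, yielding only $O(\sqrt{x})$ exceptionals below $x$ --- and even the sum over all admissible $a$ gives only $O\bigl(\sqrt{x}/(\log x)^2\bigr)$ --- so infinitely many such families must be combined with strong uniformity. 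Controlling that sum, which is precisely the gap between Iwaniec's $P_2$-result and a genuine two-prime asymptotic, is where the real difficulty lies, and it is the reason the statement is offered here only as a conjecture, consistent with the numerical evidence.
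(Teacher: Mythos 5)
The statement you are addressing is a \emph{conjecture}: the paper does not prove it, and neither do you --- which you acknowledge explicitly, and correctly. What the paper actually offers in \S 5.2 is a heuristic: the spectral analysis (Theorem~\ref{thm:pqinJallc} and Remark~\ref{rem:spectralordering}) suggests that $\pi^{(\II)}_{E}(c;x)$ should behave like $\pi_2\bigl(f_c,\gamma_5(c)^2;x\bigr)$; by analogy with the $r=1$ case of Conjecture~\ref{conj:HLC} one expects $\pi_2(f,a;x)\asymp\pi_2(a;x)$; and the unconditional lemma $\pi_2(a;x)\asymp x/(\log x)^2$ then fixes the shape of the asymptotic, supported by the numerics in Figures~7--12. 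Your Step~1 performs exactly the paper's reduction (type $(\II)$ exceptionality is governed by $\mu^{(1)}$ versus $\RB$, i.e.\ by whether $q/p$ lies below a threshold tending to $\gamma_5(c)^2$), so up to that point the two accounts coincide. Where you go further is Steps~2--3: you recast the count as a prime-pair problem on the quadric $u^2-4pq=c'$ with $u=2k+5$ and sketch the local-density computation that would produce an explicit value of $C^{(\II)}(c)$, something the paper does not attempt (it only posits the existence of the constant). Your diagnosis of the obstruction is also sharper than the paper's: the required input is a uniform two-variable Bateman--Horn correlation, not a consequence of Conjecture~\ref{conj:HLC} as stated, and each one-parameter family from Lemma~\ref{lem:pq} is too sparse to give more than $\asymp\sqrt{x}/(\log x)^2$ exceptionals with $k\le x$ (a point worth noting, since the paper's closing remark of \S 5.2 asserts $\pi^{(\II)}_E(c;x)\gg x/(\log x)^2$ from the single family $a=1$, overlooking that $k(1,y)\asymp y^2$ confines $y$ to the range $y\ll\sqrt{x}$). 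The one step you gloss over is the replacement of the moving threshold $\beta_c(k)$ by its limit $\gamma_5(c)^2$: dismissing the boundary contribution as $o\bigl(x/(\log x)^2\bigr)$ via ``a routine Selberg sieve'' still presupposes that the window $|q/p-\gamma_5(c)^2|<\delta$ captures only a proportion $O(\delta)$ of the semiprime values of $f_c$, which is itself conjectural at the level of an individual polynomial. As a justification of why the statement is stated as a conjecture and of what a proof would require, your account is consistent with the paper's and strictly more informative.
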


 Here we give a numerical computation for the values
 $\pi^{(\II)}_{E}(c;x)/\frac{x}{(\log{x})^2}$ with $x\le 5\times 10^{7}$ for each $c\in\{\pm 1,\pm 3,\pm 5\}$ in Figure~7-12.
% We also give values of the above function at $x=10^{7}$,
% which expected constant $C^{(\II)}(c)$ are given as follows;  
%\begin{equation*}
%%\label{for:HLconstant2}
% C^{(\II)}(c)
%\fallingdotseq
%\begin{cases}
% 3.2813415594362142417\ldots & (c=-5),\\
% 1.7865965119822372128\ldots & (c=-3),\\
% 3.3300267690254931793\ldots & (c=-1),\\
% 2.8037380946073433835\ldots & (c=1),\\
% 2.8037380946073433835\ldots & (c=3),\\
% 5.6032155838931063263\ldots & (c=5).
%\end{cases}
%\end{equation*}

\begin{figure}[htbp]
\begin{center}
\begin{tabular}{ccc}
  \begin{minipage}{0.33\textwidth}
    \begin{center}
     \includegraphics[clip,width=50mm]{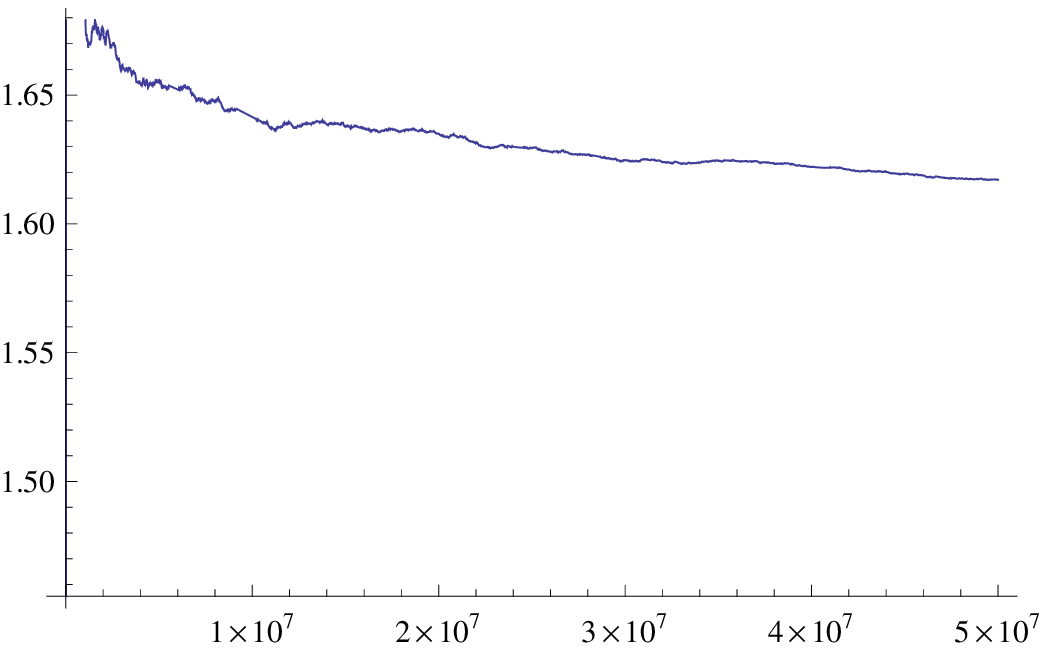}
     \caption{$c=-5$.}
    \end{center}
   \end{minipage}
   \begin{minipage}{0.33\textwidth}
    \begin{center}
     \includegraphics[clip,width=50mm]{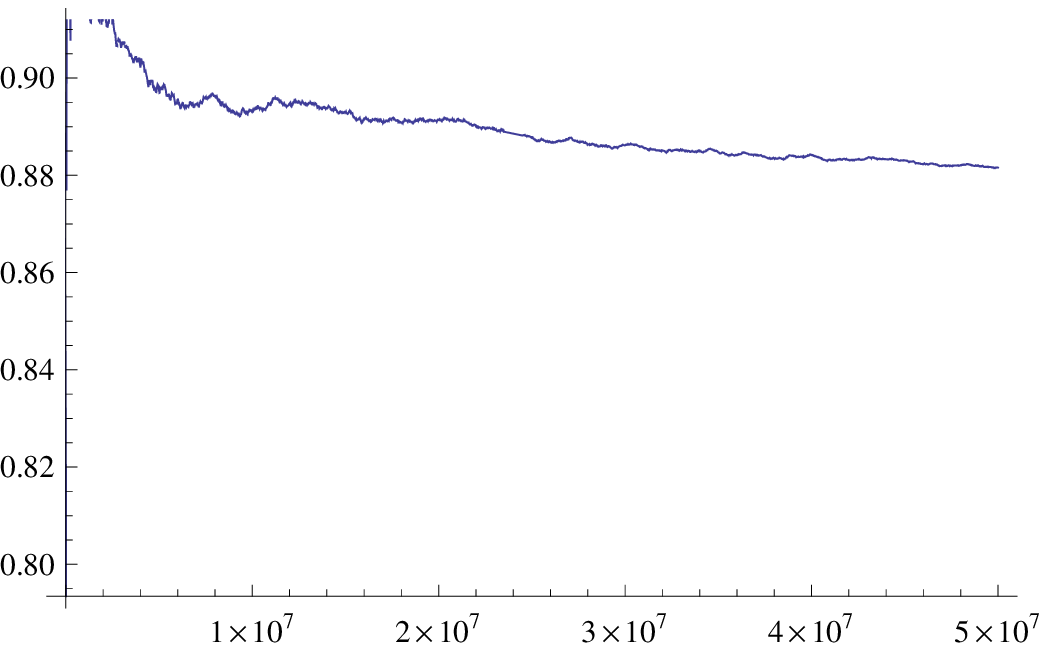}
     \caption{$c=-3$.}
    \end{center}
   \end{minipage}
      \begin{minipage}{0.33\textwidth}
    \begin{center}
     \includegraphics[clip,width=50mm]{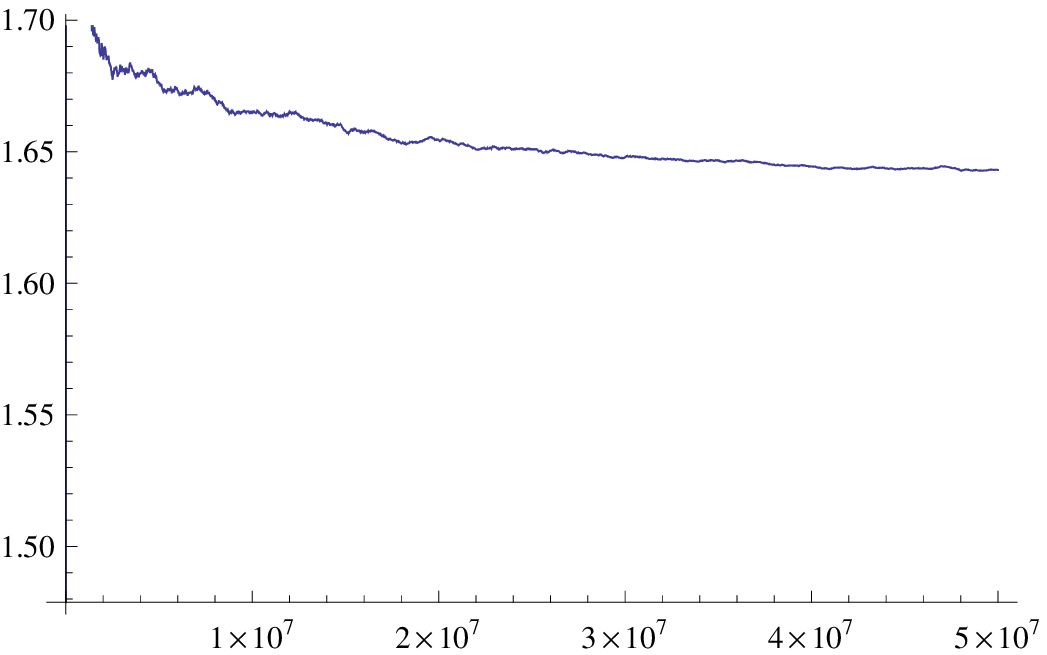}
     \caption{$c=-1$.}
    \end{center}
   \end{minipage}
  \end{tabular}
\end{center}

\begin{center}
\begin{tabular}{ccc}
  \begin{minipage}{0.33\textwidth}
    \begin{center}
     \includegraphics[clip,width=50mm]{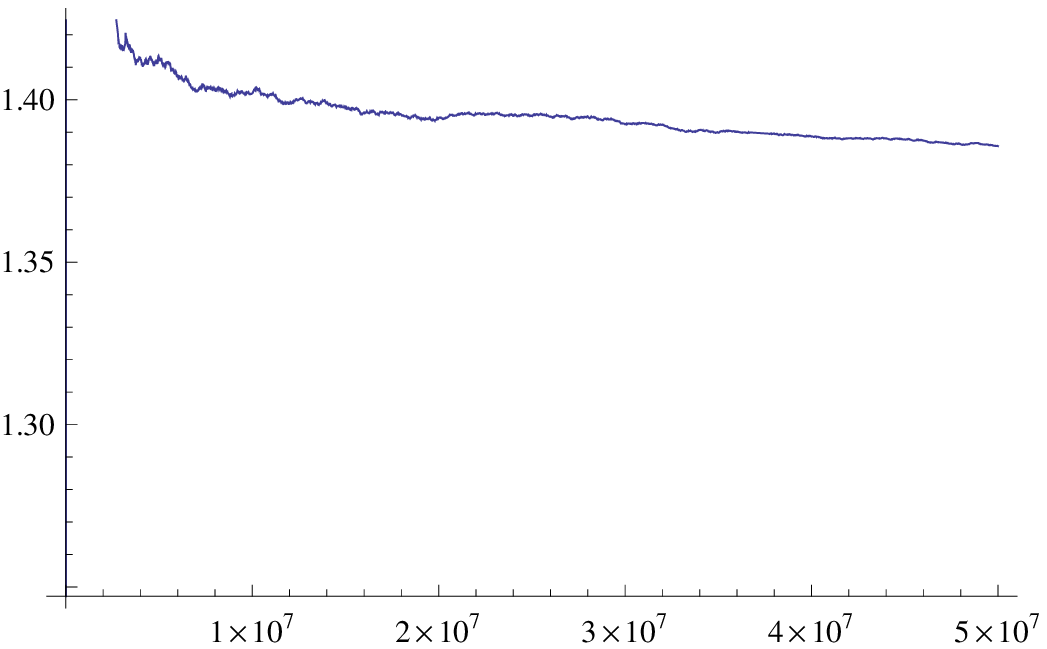}
     \caption{$c=1$.}
    \end{center}
   \end{minipage}
   \begin{minipage}{0.33\textwidth}
    \begin{center}
     \includegraphics[clip,width=50mm]{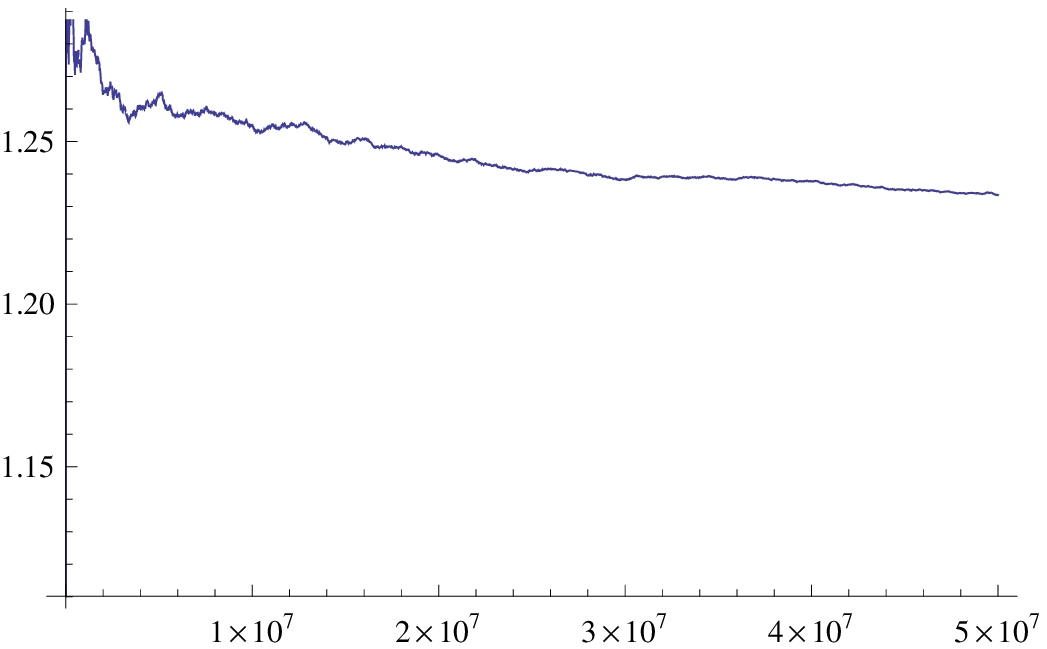}
     \caption{$c=3$.}
    \end{center}
   \end{minipage}
      \begin{minipage}{0.33\textwidth}
    \begin{center}
     \includegraphics[clip,width=50mm]{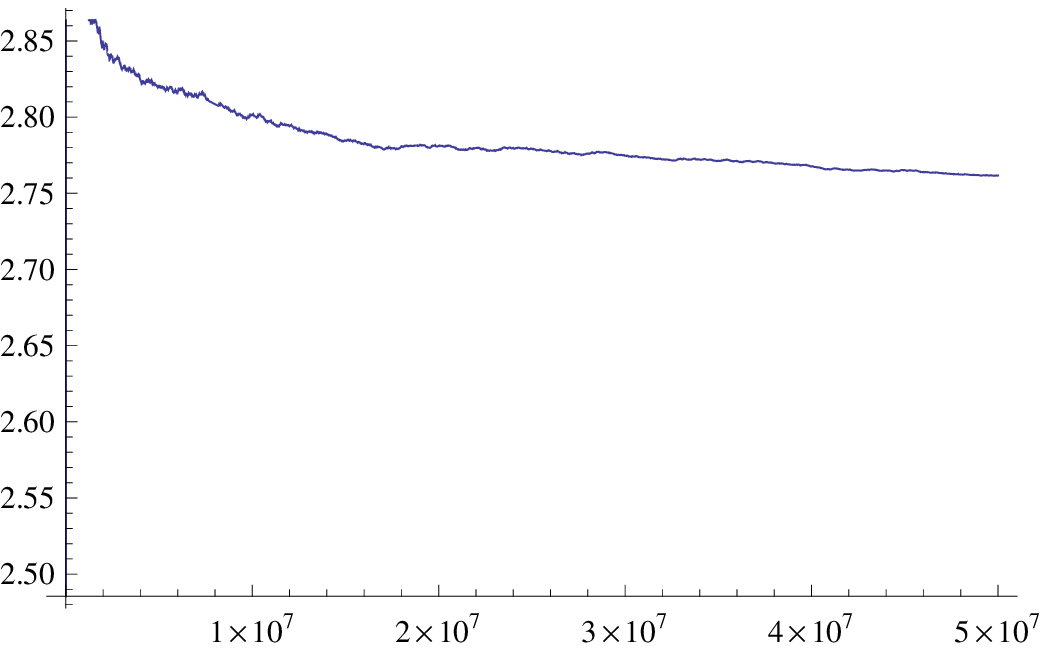}
     \caption{$c=5$.}
    \end{center}
   \end{minipage}
  \end{tabular}
\end{center}
\end{figure}

\begin{remark}
 Under Conjecture~\ref{conj:HLC}, 
 one can show the relation $\pi^{(\II)}_E(c;x)\gg \frac{x}{(\log{x})^2}$.
 Actually, from our construction of exceptionals in the proof of Theorem~\ref{thm:RamanujanHL},
 we have 
\begin{align*}
 \pi^{(\II)}_E(x;c)
\gg \#\bigl\{y\le x\,\bigl|\,\text{$p(1,y)$ and $q(1,y)$ are both primes}\bigr\}\asymp \frac{x}{(\log{x})^2}.
\end{align*} 
 Here, 
\begin{align*}
 p(1,y)=9y^2-39y+9c-14,\quad q(1,y)=16y^2-72y+16c-19.
\end{align*}
\end{remark}

\subsection{Some Remarks}

\begin{remark}
 Let $P_2$ be the set of all $pq$ where $p$ and $q$ are distinct primes with $p<q$.
 Moreover, for $f\in\mathbb{Z}[x]$,
 let $\pi_2(f;x)$ be the number of $k\le x$ such that $f(k)\in P_2$.
 Similar to the discussion in \S~\ref{sec:etype2},
 one may expect that $\pi_2(f;x)$ is asymptotically equal to a constant multiple of $\pi_2(x)$
 if $f$ satisfies suitable conditions, that is,   
\begin{equation}
\label{for:EXPECTEDexceptionalpq}
 \pi_{2}(f;x)\asymp \pi_2(x)\sim \frac{x\log\log{x}}{\log{x}}.
\end{equation}
 Here, $\pi_2(x)$ is the number of $m\le x$ such that $m\in P_2$.
 Notice that the second equality relation in \eqref{for:EXPECTEDexceptionalpq}
 was obtained by Landau \cite{Landau1900} (see also \cite{HardyWright1979}).

 A positive integer having at most two distinct prime factors
 is called an {\it almost prime}.
 When $f(x)$ is a quadratic polynomial,
 it is shown by Iwaniec \cite{Iwaniec1978} and Lemke-Oliver \cite{LemkeOliver2012}
 that there are infinitely many $k$ such that $f(k)$ is almost prime.
 More precisely, they prove that
\begin{equation}
\label{for:IL}
  \pi(f;x)+\pi_{2}(f;x)\gg \frac{x}{\log{x}}
\end{equation}
 if $f$ satisfies suitable conditions.
 Of course, the expectation \eqref{for:EXPECTEDexceptionalpq} is more stronger than the result \eqref{for:IL}.
 In Figure~13,
 we give a numerical computation of $\pi_{2}(f;x)/(\frac{x\log\log{x}}{\log{x}})$ for $x\le 5\times 10^{4}$
 with $f(k)=k^2+1$, which is studied in \cite{Iwaniec1978}.

\begin{figure}[htbp]
\begin{center}
 \includegraphics[clip,width=70mm]{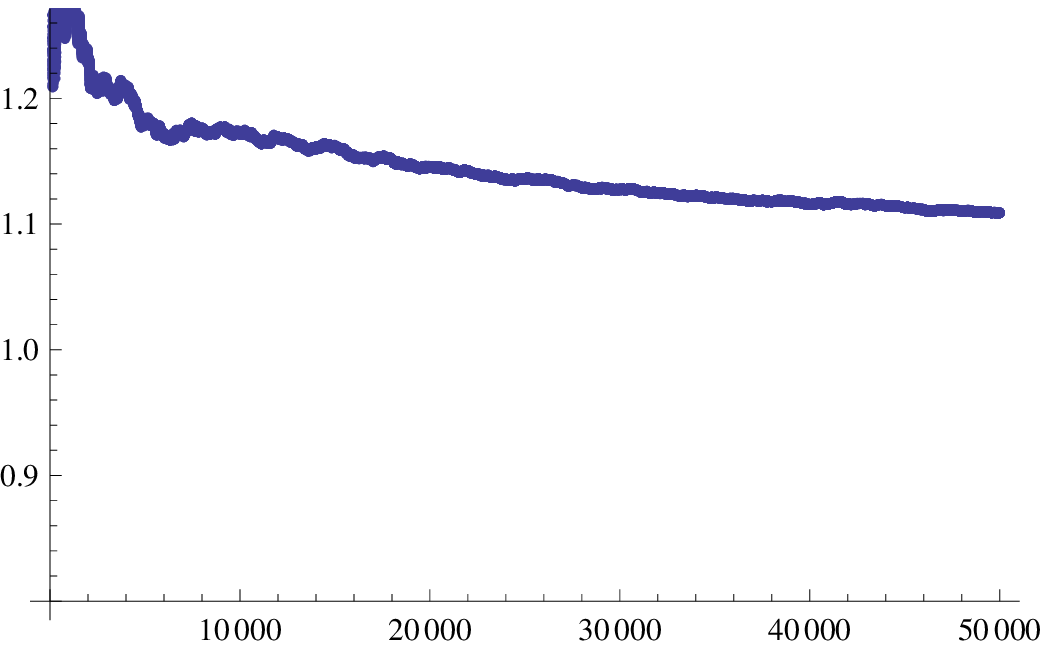}
     \caption{The asymptotic of $\pi_2(f;x)/(\frac{x\log{\log{x}}}{\log{x}})$ with $f(k)=k^2+1$ for $x\le 5\times 10^{4}$.}
\end{center}
\end{figure}

%\begin{figure}[htbp]
%\begin{center}
%\begin{tabular}{cc}
%  \begin{minipage}{0.5\textwidth}
%    \begin{center}
%     \includegraphics[clip,width=60mm]{Iwaniec1.eps}
%    \end{center}
%   \end{minipage}
%   \begin{minipage}{0.5\textwidth}
%    \begin{center}
%     \includegraphics[clip,width=60mm]{Iwaniec2.eps}
%     \end{center}
%   \end{minipage}
%  \end{tabular}
%\end{center}
%\caption{The asymptotic of $\pi_2(f;x)/(\frac{x\log{\log{x}}}{\log{x}})$ with $f(k)=k^2+1$ for $x\le 5\times 10^{4}$.}
%\end{figure}
\end{remark}

\begin{remark}
% From our consideration,
% exceptional $m$'s are numbers
% which are represented by the quadratic forms $f_c(x):=x^2+5x+c$ for some $c\in\{\pm 1,\pm 3,\pm 5\}$
% and have at most two  distinct primes factors satisfying some condition
% (except for the finite cases in Theorem~\ref{for:primeprimeinJ}, that is, the case $m=p^2\in J$).
% Therefore,
 If one can prove that
 there exists infinitely many exceptionals in the framework of graph theory,  
 then, from Theorem~\ref{thm:criterionSl0+2} and Proposition~\ref{thm:primeinJ},
 one may obtain a theorem of Iwaniec \cite{Iwaniec1978} and Lemke-Oliver \cite{LemkeOliver2012} type,
% which asserts that there are infinitely many $2$-almost primes expressed by a quadratic polynomial,
 for at least one of $f_c$. % may be obtained in such a framework.
 Much more stronger,
 if one can prove the existence of infinitely many exceptional primes in such a framework,
 then we can say that
 the conjecture of Hardy-Littlewood and Bateman-Horn is true for at least one of $f_c$. 
% Theorem~\ref{thm:immediately},\ref{thm:primeinJ} and \ref{thm:pqinJ},
% we know that $m$ has at most two distinct prime factors if $\hat{l}=l_0+2$
% (except for the finite cases in Theorem~\ref{for:exceptionalinJ}).
% Moreover, if it has exactly two distinct factors, then we need an extra condition $p<q<4p$.
% This implies that if we can prove that there exists infinitely many $m\in J$ such that $\hat{l}=l_0+2$ in the frame work
% on the graph theory, 
% then we can obtain a refinement of the Iwaniec \cite{Iwaniec1978} and Lemke-Oliver \cite{LemkeOliver2012} type of theorem,
% which asserts that there are infinitely many integer which is expressed as a quadratic polynomial having at most two prime factors,
% for one of the polynomial $f_c(x)$.
% Much more stronger, if we can prove the infinitely many existence of primes $p\in J$ such that $\hat{l}=l_0+2$,
% then this immediately implies that we prove the Hardy-Littlewood conjecture for one of $f_c(x)$. 
\end{remark}

\section{Ramanujan abelian graphs of odd order}

 Our problem can be discussed more general situation.
 Namely, we can determine $\hat{l}$ for any finite abelian group $G$ of odd order $m$, instead of $\mathbb{Z}_m$.
 Let $\widehat{G}$ be the dual group of $G$ and
 $\cS$ the set of all Cayley subset of $G$.
 Notice that, since $m$ is odd, there is no element in $G$ whose order is two.
 This means that $|S|$ is even for any $S\in\cS$ and hence $l(S)=m-|S|$ is always odd.
 We denote by $X(S)$ the Cayley graph of $G$ attached to $S\in\cS$
 and $\Lambda(S)$ the set of all eigenvalues of $X(S)$.
 As we have explained in Section~\ref{sec:CayleyGraph},
 it can be written as $\Lambda(S)=\{\lambda_{\chi}\,|\,\chi\in \widehat{G}\}$
 where $\lambda_{{\bf 1}_G}=|S|$ with ${\bf 1}_G$ being the trivial character of $G$ and
\[
 \lambda_{\chi}=\sum_{a\in S}\chi(a)=-\sum_{b\in G\setminus S}\chi(b), \qquad \chi\ne {\bf 1}_G.
\]
 From the same discussion as in the proof of Lemma~\ref{lem:trivial},
 it is immediate to see that $\hat{l}\ge l_0=2\Gauss{\sqrt{m}-\frac{3}{2}}+1$.
 Let us also call $G$ {\it ordinary} (resp. {\it exceptional}) if $\hat{l}=l_0$ (resp. $\hat{l}\ge l_0+2$).

 From the fundamental theorem of finite abelian groups,
 we may assume that $G$ is a direct sum of finite number of cyclic groups.
 We remark that we here do not consider $G=\mathbb{Z}_{3}\oplus\mathbb{Z}_{3}$
 because it can be checked that all the Cayley graphs of $G$ are Ramanujan.
 The following theorem says that there are only finitely many exceptionals $G$ which are not cyclic.

\begin{thm}
\label{thm:finiteabel}
 Let $G$ be a finite abelian group of odd order which is not cyclic
 Then, $G$ is ordinary except for the cases $G=\mathbb{Z}_{p}\oplus \mathbb{Z}_{p}$ where $p$ is odd prime with $3\le p\le 17$.
 In the exceptional cases, we have
\[
 \hat{l}=
\begin{cases}
 l_0+2 & p=7,11,13,17,\\
 l_0+4 & p=5.
\end{cases}
\]
\end{thm}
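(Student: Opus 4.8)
The plan is to analyze non-cyclic finite abelian groups $G = \bigoplus_{i} \mathbb{Z}_{n_i}$ of odd order $m$ by exploiting the structure of the dual group and reducing, as in the cyclic case, to a careful estimate of the largest non-trivial eigenvalue of the ``generic'' Cayley subset $S^{(l_0+2)}$, i.e.\ the one obtained by deleting the $l_0+2$ group elements that are smallest in an appropriate sense. The key structural observation is that since $G$ is non-cyclic, it contains a subgroup isomorphic to $\mathbb{Z}_p \oplus \mathbb{Z}_p$ for the smallest prime $p$ dividing $m$; writing $m = p^2 t$, the index of the natural ``line'' subgroups is small, and this forces the removed set to fit inside a union of cosets of a proper subgroup whenever $l_0+2$ is small relative to $t$ (or relative to $p^2$ when $G$ itself has a $\mathbb{Z}_p \oplus \mathbb{Z}_p$ summand). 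Concretely, if one can choose $S \in \cS_{l_0+2}$ with $G \setminus S$ contained in a single coset-translate family of a subgroup $H$ of index $d$, then some non-trivial character $\chi$ trivial on $H$ gives $|\lambda_\chi(S)| = l_0+2 \ge \RB(S)$, exactly as in the proof of Proposition~\ref{thm:primeinJ}, proving $G$ is ordinary.

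First I would make precise the counting: the number of elements of $G$ of ``absolute value'' at most $r$ (with respect to the coordinatewise symmetric ordering used to define $S^{(l)}$) grows like $r^{\mathrm{rank}(G)}$ up to constants, whereas $l_0 + 2 \sim 2\sqrt{m}$; so for $\mathrm{rank}(G) \ge 2$ the removed set is extremely sparse and can always be packed into a union of few cosets of a large proper subgroup. I would carry this out in two regimes. In the first regime $G$ has a cyclic direct factor $\mathbb{Z}_n$ with $n$ large (equivalently $G \ne \mathbb{Z}_p \oplus \mathbb{Z}_p$ and, more generally, $m$ is not one of the finitely many small exceptional orders); here the argument of Proposition~\ref{thm:primeinJ} adapts essentially verbatim, using the projection $G \to \mathbb{Z}_n$ to produce a character realizing eigenvalue $l_0+2$, so $G$ is ordinary. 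In the second regime $G = \mathbb{Z}_p \oplus \mathbb{Z}_p$, and one must determine $\hat{l}$ exactly for the finitely many primes $p$ in play; here I would first bound $p$ from above by showing that for $p \ge 19$ the estimate of Lemma~\ref{for:keylemma}-type (applied to the relevant eigenvalue sum, which is now a two-dimensional Dirichlet kernel $\prod_{i=1,2}\frac{\sin(\pi j_i \ell/p)}{\sin(\pi j_i/p)}$ or a product/sum thereof over the deleted box) forces $X(S^{(l_0+2)})$ to be non-Ramanujan, hence $G$ ordinary; then for $3 \le p \le 17$ I would compute $\hat{l}$ directly by checking all covalencies $l_0, l_0+2, l_0+4$ against the finitely many Cayley subsets up to the symmetry group $\mathrm{GL}_2(\mathbb{F}_p)$ acting on generating sets.

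The main obstacle I anticipate is the exact determination of $\hat{l}$ for $G = \mathbb{Z}_p \oplus \mathbb{Z}_p$ with $p$ small: unlike the cyclic case, the eigenvalues are two-variable character sums and the set of Cayley subsets of a given covalency is large, so the ``which $S$ maximizes $\mu(S)$'' question is genuinely combinatorial rather than reducible to the single canonical set $S^{(l)}$. I would handle this by using the $\mathrm{GL}_2(\mathbb{F}_p)$-symmetry to cut the search down to orbit representatives of small subsets $G \setminus S$, and by the observation that the extremal $G \setminus S$ should be a union of lines (one-dimensional subspaces) or near-lines through the origin — which is exactly what produces large eigenvalues via characters vanishing on those lines — so the cases $p = 5, 7, 11, 13, 17$ each reduce to a short finite verification, while confirming $p \ge 19$ is ordinary reduces to a monotonicity estimate analogous to the one in the proof of Lemma~\ref{for:keylemma}. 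The remaining non-cyclic groups with a repeated small prime but also other factors (e.g.\ $\mathbb{Z}_3 \oplus \mathbb{Z}_3 \oplus \mathbb{Z}_t$, or $\mathbb{Z}_9 \oplus \mathbb{Z}_3$) fall under the first regime once one checks that the extra factor makes $l_0+2$ small enough to pack $G\setminus S$ into cosets, with only a finite list of small orders needing individual inspection.
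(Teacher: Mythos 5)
Your overall architecture matches the paper's: reduce via coset-packing (a character trivial on a proper subgroup $H$ sees the eigenvalue $l_0+2\ge\RB$ whenever $G\setminus S$ fits into few cosets of $H$), isolate $G=\mathbb{Z}_p\oplus\mathbb{Z}_p$ as the only problematic shape, and finish with a finite check. The paper packages the first step as a clean quantitative lemma (if $G=G_1\oplus G_2$ with $n_1\le n_2$ and $n_2\ge 4n_1-3$ then $G$ is ordinary), plus a CRT re-decomposition when a factor has two prime divisors, plus separate coset arguments for $\mathbb{Z}_{3^s}\oplus\mathbb{Z}_{3^{s+1}}$ and $\mathbb{Z}_{p^s}\oplus\mathbb{Z}_{p^s}$, $s\ge 2$; your sketch covers this in spirit, though you should state the threshold $n_2\ge 4n_1-3$ explicitly since it is exactly the counting condition $l_0+2\le n_2$ that makes the packing possible.

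The genuine gap is in your treatment of $\mathbb{Z}_p\oplus\mathbb{Z}_p$, in two places. First, for $p\ge 19$ you propose to show that a ``generic'' box-shaped $S^{(l_0+2)}$ is non-Ramanujan via a two-dimensional Dirichlet-kernel estimate; but in $\mathbb{Z}_p\oplus\mathbb{Z}_p$ there is no canonical box of $l_0+2=2p-1$ symmetric elements, and the natural witness is a different set: take $G\setminus S$ to be the full line $\{0\}\oplus\mathbb{Z}_p$ together with $p-1$ elements of the adjacent cosets $\{\pm1\}\times\mathbb{Z}_p$. The character $\chi_0(g_1,g_2)=e^{2\pi i g_1/p}$ then gives $|\lambda_{\chi_0}|=p+(p-1)\cos\frac{2\pi}{p}\approx 2p-1-\frac{2\pi^2}{p}$, versus $\RB\approx 2p-2-\frac{2}{p}$, and the sign of the difference flips exactly at $p=19$ (this is the paper's $d(p,1)>0\iff p\ge19$). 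Your Dirichlet-kernel candidate is not obviously extremal and does not obviously recover this sharp threshold. Second, for $5\le p\le 17$ you must bound $\max_{S\in\cS_{l_0+2h}}\mu(S)$ from \emph{above} to certify $\hat l\ge l_0+2$ (resp.\ $l_0+4$ for $p=5$), and your proposed exhaustive search modulo $\mathrm{GL}_2(\mathbb{F}_p)$ is not feasible at $p=17$: one chooses $16$ symmetric pairs from $144$, about $4\times10^{20}$ sets, while $|\mathrm{GL}_2(\mathbb{F}_{17})|\approx 7.8\times10^4$, leaving $\sim 5\times10^{15}$ orbits. So you still need the structural claim — that the maximum of $\mu(S)$ over $\cS_{l_0+2h}$ is attained precisely at the line-plus-partial-adjacent-cosets complement, with value $p+(p-3+2h)\cos\frac{2\pi}{p}$ — and that claim, which your ``union of lines'' heuristic gestures at, is the actual mathematical content to be proved here; without it neither the $p\ge19$ threshold nor the exact values $\hat l=l_0+2$ ($p=7,11,13,17$) and $\hat l=l_0+4$ ($p=5$) follow.
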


 Before giving a proof of the theorem, 
 it is convenient to prove the following lemma. 
 
\begin{lem}
\label{lem:keyabel}
 Let $G_1,G_2$ be finite abelian groups
 with $|G_1|=n_1,|G_2|=n_2$, respectively,
 where $n_1,n_2$ are odd integer with $n_1\le n_2$.
 If $n_2\ge 4n_1-3$, then $G=G_1\oplus G_2$ is ordinary.
\end{lem}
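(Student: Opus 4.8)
The plan is to produce, for $G=G_1\oplus G_2$ of order $m=n_1n_2$, a single Cayley subset $S$ of covalency $l_0+2$ whose Cayley graph is not Ramanujan. Since we already know $\hat l\ge l_0$ and the covalency takes only odd values (as $m$ is odd), the mere existence of such an $S$ forces $\hat l=l_0$, i.e. $G$ is ordinary. The subset will be chosen so that its complement $G\setminus S$ lies inside the subgroup $H=\{0\}\oplus G_2$ of order $n_2$; then, picking a non-trivial character $\chi\in\widehat G$ that is trivial on $H$ (possible because $G/H\cong G_1$ is non-trivial), we get $\lambda_\chi=-\sum_{b\in G\setminus S}\chi(b)=-\lvert G\setminus S\rvert=-(l_0+2)$, hence $\mu(X(S))\ge l_0+2$.

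The construction goes through as soon as $l_0+2\le\lvert H\rvert=n_2$, and the first step is to deduce this from the hypothesis $n_2\ge 4n_1-3$. With $l_0=2\Gauss{\sqrt m-\tfrac32}+1$ and $n_2$ odd, the inequality $l_0+2\le n_2$ is equivalent to $\sqrt m<\tfrac{n_2+2}{2}$, i.e. to $4n_2(n_1-1)<n_2^2+4$, which holds whenever $n_2>4(n_1-1)$ and hence in particular when $n_2\ge 4n_1-3$. Granting $l_0+2\le n_2$, I then pick a symmetric subset $T\subseteq H$ with $0\in T$ and $\lvert T\rvert=l_0+2$ — possible precisely because $l_0+2$ is odd and does not exceed $n_2$ — and set $S=G\setminus T$, so $l(S)=l_0+2$.

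The second step is to check that $S$ is genuinely a Cayley subset. Symmetry and $0\notin S$ are immediate from the choice of $T$, so the only real point is $\langle S\rangle=G$. Here $S\supseteq G\setminus H$, and if $\langle S\rangle$ were a proper subgroup $K$, then $K\supseteq G\setminus H$ would force $\lvert K\rvert\ge m-n_2=(n_1-1)n_2$, while $\lvert K\rvert\le m/p$ for the least prime divisor $p$ of $m$, with $p\ge 3$ since $m$ is odd; this gives $3(n_1-1)\le n_1$, impossible for $n_1\ge 3$. Hence $X(S)$ is connected. Finally, combining $\mu(X(S))\ge l_0+2$ with $\RB=2\sqrt{m-(l_0+2)-1}$, the graph $X(S)$ fails to be Ramanujan as soon as $l_0+2>2\sqrt{m-l_0-3}$, equivalently $(l_0+4)^2>4m$, equivalently $l_0>2(\sqrt m-2)$ — which is exactly the left-hand inequality of \eqref{for:boundtrivialbound}. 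This completes the argument.

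I do not expect a serious obstacle: the content is essentially bookkeeping. The quantitative heart is the single inequality $l_0+2\le n_2$, which is tight at the boundary case $n_2=4n_1-3$ (so no separate treatment of small $G$ is needed), and the only place demanding a moment's thought is the connectedness of $X(S)$, which the ``a proper subgroup containing $G\setminus H$ would have too small an index'' argument settles uniformly.
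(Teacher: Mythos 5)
Your proof is correct and follows essentially the same route as the paper: the complement of $S$ is placed inside the subgroup $\{0\}\oplus G_2$ of order $n_2$ (which the hypothesis $n_2\ge 4n_1-3$ makes possible since it forces $l_0+2\le n_2$), and a non-trivial character trivial on that subgroup yields an eigenvalue of absolute value $l_0+2>\RB$. You simply make explicit several points the paper leaves implicit (the floor-function arithmetic, the existence of a symmetric $T$, connectedness of $X(S)$, and the strict inequality $l_0+2>\RB$ via \eqref{for:boundtrivialbound}), all correctly.
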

\begin{proof}
 Take a non-trivial character $\chi_0=\eta\otimes {\bf 1}_{G_2}\in\widehat{G}$ with $\eta$ being a non-trivial character of $G_1$.
 As in the proof of Proposition~\ref{thm:primeinJ}, 
 the condition $n_2\ge 4n_1-3$ implies that 
 one can take $S\in\cS_{l_0+2}$ as $G\setminus S\subset \{(0,g_2)\in G\,|\,g_2\in G_2\}$.
 This shows that 
\[
 \bigl|\lambda_{\chi_0}\bigr|
=\left|\sum_{(g_1,g_2)\in G\setminus S}\chi_0(g_1,g_2)\right|=l_0+2\ge \RB
\]
 and hence asserts that $X(S)$ is not Ramanujan.
\end{proof}

\begin{proof}
[Proof of Theorem~\ref{thm:finiteabel}]
 From the fundamental theorem of finite abelian groups,
 we may assume that $G$ is of the form $G=\mathbb{Z}_{m_1}\oplus\mathbb{Z}_{m_2}\oplus \cdots \oplus \mathbb{Z}_{m_r}$ 
 where $m_1,m_2,\ldots,m_r$ are odd integers with $m_1\,|\,m_2\,|\,\cdots \,|\,m_r$.
 Moreover, because $G$ is not cyclic, we may further assume that $r\ge 2$.
 Let $G_1=\mathbb{Z}_{m_1}$ and $G_2=\mathbb{Z}_{m_2}\oplus \cdots \oplus \mathbb{Z}_{m_r}$ and
 $n_1=m_1$ and $n_2=m_2\cdots m_r$, respectively.
 If $r\ge 3$, then one easily sees that $n_2\ge 4n_1-3$ and hence,
 from Lemma~\ref{lem:keyabel}, $G$ is ordinary.
 Therefore, it is sufficient to study only the case $r=2$.

 Suppose that at least one of $m_1$ and $m_2$ has two prime factors.
 Then, since $m_1\,|\,m_2$, it can be written as $m_1=p^{e_1}t_1$ and $m_2=p^{e_2}t_2$
 for some odd prime $p$ and odd integers $t_1,t_2$ with $(p,t_1)=(p,t_2)=1$.
 Here, at least one of $t_1$ and $t_2$ are greater than one.
 Now, from the Chinese reminder theorem, we have  
 $G=\mathbb{Z}_{m_1}\oplus \mathbb{Z}_{m_2}\cong \mathbb{Z}_{p^{e_1}}\oplus \mathbb{Z}_{t_1}\oplus \mathbb{Z}_{p^{e_2}}\oplus \mathbb{Z}_{t_2}$,
 whence, from Lemma~\ref{lem:keyabel} again, $G$ is ordinary.
 Therefore, we may assume that $m_1$ and $m_2$ can be respectively written as $m_1=p^s$ and $m_2=p^{t}$ for some $p$ and $s\le t$.  
 Moreover, we see that $n_2\ge 4n_1-3$ if $t\ge s+2$ or $t=s+1$ with $s=1$ when $p=3$ or $t\ge s+1$ when $p\ge 5$.
 Hence, it is enough to consider only the cases $(m_1,m_2)=(3^s,3^{s+1})$ with $s\ge 2$ or $(m_1,m_2)=(p^s,p^s)$ with
 $s\ge 1$ for $p\ge 5$.

 Assume that $G$ is the former, that is, $G=\mathbb{Z}_{3^s}\oplus\mathbb{Z}_{3^{s+1}}$ with $s\ge 2$.
 In this case, we can take $S\in\cS_{l_0+2}$ as $G\setminus S\subset \{(g_1,g_2)\in G\,|\,3\,|\,g_1, \ 1\le g_2\le 3^{s+1}\}$
 because $l_0+2=2\Gauss{3^s\sqrt{3}-\frac{3}{2}}+3<3^{2s}$.
 Then, for such $S$, we have $|\lambda_{\chi_0}|=l_0+2$ where $\chi_0(g_1,g_2)=e^{\frac{2\pi ig_1}{3}}$.
 This shows that $G$ is ordinary.
 We next consider the latter, that is, $G=\mathbb{Z}_{p^s}\oplus\mathbb{Z}_{p^{s}}$ with $s\ge 1$.
 At first, let $s\ge 2$.
 Then, we can similarly take $S\in\cS_{l_0+2}$ as $G\setminus S\subset \{(g_1,g_2)\in G\,|\,p\,|\,g_1, \ 1\le g_2\le p^{s}\}$
 because $l_0+2=2p^s-1<p^{2s-1}$ and hence, by the same reason as above, 
% have $|\lambda_{\chi_1}|=l_0+2$ where $\chi_1(g_1,g_2)=e^{\frac{2\pi ig_1}{p}}$.
% This shows that
 $G$ is ordinary.

 Now, only the cases $G=\mathbb{Z}_{p}\oplus\mathbb{Z}_{p}$ with $p\ge 5$ are left. 
% When $p=3$, it is easy to check that all Cayley graphs of $G$ are Ramanujan,
% which in particular implies that $\mathbb{Z}_{3}\oplus\mathbb{Z}_{3}$ is exceptional.
% Hence, from now on, we assume that $p\ge 5$.
 Let $h\ge 1$.
 If $p\ge 2h-3$, then, we can take $S\in\cS_{l_0+2h}$ as
 $G\setminus S\subset\{(0,g_2)\in G\,|\,1\le g_2\le p\}\cup\{(\pm 1,\pm g_2)\in G\,|\,1\le g_2\le \frac{p-1}{2}\}\cup\{(\pm 1,0)\}$ because
 $l_0+2h(=2p-3+2h)\le 3p$.
 For such $S$, we have $|\lambda_{\chi_0}|=p+(p-3+2h)\cos\frac{2\pi}{p}$ where $\chi_0(g_1,g_2)=e^{\frac{2\pi ig_1}{p}}$.
 We notice that this is the largest, that is, $|\lambda_{\chi_0}|=\hat{\mu}_{l_0+2h}=\underset{S\in\mathcal{S}_{l_0+2h}}{\max}\mu(S)$.
 Since $\hat{l}\ge l_0+2h$ is equivalent to $\hat{\mu}_{l_0+2h}\le \RB_{l_0+2h}$,
 this implies that $p+(p-3+2h)\cos\frac{2\pi}{p}\le 2\sqrt{p^2-2p+2-2h}$. Let 
\[
 d(p,h)=p+(p-3+2h)\cos\frac{2\pi}{p}-2\sqrt{p^2-2p+2-2h}.
\]
 Then, one can see that $d(p,1)>0$ if and only if $p\ge 19$, $d(p,2)>0$ if and only if $p\ge 7$ and $d(p,3)>0$ for all $p\ge 5$.
 This completes the proof. 
\end{proof}

%===============   Reference  ===============================================

\bigskip 

\noindent
\textsc{Miki HIRANO}\\
 Graduate School of Science and Engineering, Ehime University,\\
 Bunkyo-cho, Matsuyama, 790-8577 JAPAN.\\
 \texttt{hirano@math.sci.ehime-u.ac.jp}\\

\noindent
\textsc{Kohei KATATA}\\
 Graduate School of Science and Engineering, Ehime University,\\
 Bunkyo-cho, Matsuyama, 790-8577 JAPAN.\\
 \texttt{katata@math.sci.ehime-u.ac.jp}\\

\noindent
\textsc{Yoshinori YAMASAKI}\\
 Graduate School of Science and Engineering, Ehime University,\\
 Bunkyo-cho, Matsuyama, 790-8577 JAPAN.\\
 \texttt{yamasaki@math.sci.ehime-u.ac.jp}

%===========================================================================
%===========================================================================
\end{document}